\theoremstyle{plain}
\newtheorem{theorem}{Theorem}[section]
\newtheorem{proposition}[theorem]{Proposition}
\newtheorem{corollary}[theorem]{Corollary}
\newtheorem{lemma}{Lemma}[section]
\theoremstyle{definition} 
\newtheorem*{definition*}{Definition}
\newtheorem*{remark*}{Remark}
\newtheorem{remark}{Remark}[section]
\newcommand{\R}{\mathbb{R}}
\def \b {\beta}
\def\Ric{\text{Ric}}
\def\a{\alpha}
\def\l{\lambda}
\def\ve{\varepsilon}
\def\p{\partial}
\def\R{\mathbb{R}}
\def\vp{\varphi}
\def\k{\kappa}
\def\Ric{\operatorname{Ric}}
\def\Rm{\operatorname{Rm}}
\def\tr{\operatorname{tr}}
\def\diam{\operatorname{diam}}
\def\Diam{\operatorname{Diam}}
\def\n{\nabla}
\numberwithin{equation}{section}
\newcommand*\owedge{\mathpalette\@owedge\relax}
\newcommand*\@owedge[1]{%
  \mathbin{%
    \ooalign{%
      $#1\m@th\bigcirc$\cr
      \hidewidth$#1\m@th\wedge$\hidewidth\cr
    }%
  }%
}
\begin{document}

\title[Li-Yau-Hamilton Estimates and Parabolic Frequency]{Matrix Li-Yau-Hamilton Estimates under Ricci Flow and Parabolic Frequency}

\author[Li]{Xiaolong Li}\thanks{The first author's research is partially supported by Simons Collaboration Grant \#962228 and a start-up grant at Wichita State University}
\address{Department of Mathematics, Statistics and Physics, Wichita State University, Wichita, KS, 67260, USA}
\email{xiaolong.li@wichita.edu}

\author[Zhang]{Qi S. Zhang}\thanks{The second author's research is partially supported by Simons Collaboration Grant \#710364.}
\address{Department of Mathematics, University of California, Riverside, Riverside, CA, 92521, USA}
\email{qizhang@math.ucr.edu}

\subjclass[2020]{53E20 (Primary), 58J35, 35K05 (Secondary)}

\keywords{Li-Yau-Hamilton estimates, matrix Harnack inequality, parabolic frequency, unique continuation, conjugate heat equation, Ricci flow}

\begin{abstract}
We prove matrix Li-Yau-Hamilton estimates for positive solutions to the heat equation and the backward conjugate heat equation, both coupled with the Ricci flow. 
We then apply these estimates to establish the monotonicity of parabolic frequencies up to correction factors. As applications, we obtain some unique continuation results under the nonnegativity of sectional or complex sectional curvature. 
\end{abstract}

\maketitle

\section{Introduction}

\subsection{Matrix Li-Yau-Hamilton Estimates}
In their seminal paper \cite{LY86}, P. Li and S.-T. Yau developed fundamental gradient estimates for positive solutions to the heat equation on a Riemannian manifold. In particular, they proved that if $u:M^n\times [0,\infty) \to \R$ is a positive solution to the heat equation 
\begin{equation}\label{heat equation fixed metric}
u_t-\Delta_g u=0,
\end{equation}
on an $n$-dimensional complete Riemannian manifold $(M^n,g)$ with nonnegative Ricci curvature, then 
\begin{equation}\label{eq Li-Yau gradient estimate}
    \frac{u_t}{u}-\frac{|\n u|^2}{u^2} +\frac{n}{2t} = \Delta \log u +\frac{n}{2t} \geq 0
\end{equation}
for all $(x,t)\in M\times (0,\infty)$. 
Remarkably, the equality in \eqref{eq Li-Yau gradient estimate} is achieved on the heat kernel on the Euclidean space $\R^n$. The Li-Yau estimate is also called a differential Harnack inequality since integrating it yields a sharp version of the classical Harnack inequality originated from Moser \cite{Moser64}. The Li-Yau estimate and its generalizations in various settings provide a versatile tool for studying the analytical, topological, and geometrical properties of manifolds (see for instance the classical books \cite{SYbook} and \cite{Libook}). 

Under the stronger assumption that $(M^n,g)$ has nonnegative sectional curvature and parallel Ricci curvature, Hamilton \cite{Hamilton93} extended the Li-Yau estimate to the full matrix version
\begin{equation}\label{eq Hamilton matrix estimate}
    \n_i \n_j \log u +\frac{1}{2t} g_{ij} \geq 0.
\end{equation}
Note that the trace of \eqref{eq Hamilton matrix estimate} is \eqref{eq Li-Yau gradient estimate}. Later on, Chow and Hamilton \cite{CH97} further extended \eqref{eq Li-Yau gradient estimate} and \eqref{eq Hamilton matrix estimate} to the constrained case under the same curvature assumptions, and discovered new linear Harnack estimates. 

When $(M^n,g)$ is a complete K\"ahler manifold with nonnegative bisectional curvature, Cao and Ni \cite{CN05} proved the matrix inequality 
\begin{equation}\label{eq Cao-Ni matrix estimate}
    \n_\a \n_{\bar{\b}} \log u +\frac{1}{t}g_{\a\bar{\b}} \geq 0,
\end{equation}
and they called it a matrix Li-Yau-Hamilton estimate. Inspired by Chow's interpolation consideration \cite{Chow98} of Li-Yau's and Hamilton's Harnack inequalities on a surface,  Chow and Ni \cite[Theorem 2.2]{Ni07} proved that if $(M^n, g(t))$ is a complete solution to K\"ahler-Ricci flow with bounded nonnegative bisectional curvature and if $u$ is a positive solution to the forward conjugate heat equation
\begin{equation}\label{forward conjugate heat equation}
u_t-\Delta_{g(t)} u=Ru,
\end{equation}
then 
\begin{equation}\label{eq Ni matrix estimate forward}
R_{\a \bar{\b}} + \n_\a \n_{\bar{\b}} \log u +\frac{1}{t}g_{\a\bar{\b}} \geq 0.
\end{equation}
The equality holds if and only if $(M^n,g(t))$ is an expanding K\"ahler-Ricci soliton.
These results were generalized to the constrained case by Ren, Yao, Shen, and Zhang \cite{RYSZ15}. 

When the metrics are evolved by the Ricci flow (see \cite{Hamilton82} or \cite{CLN})
\begin{equation*}
    \p_t g =-2\Ric,
\end{equation*}
Perelman \cite{Perelman1} discovered a spectacular differential Harnack estimate for the fundamental solution to the backward conjugate heat equation
\begin{equation}\label{backward conjugate heat equation}
u_t+\Delta_{g(t)} u =R u,
\end{equation}
where $R$ denotes the scalar curvature of $(M^n,g(t))$. Astoundingly, his estimate did not require any curvature conditions. 
For more information on matrix Li-Yau-Hamilton estimates and differential Harnack estimates, as well as their important applications in geometry, we refer the reader to Chow's survey \cite{Chow22}, Ni's survey \cite{Ni08Survey}, and the monographs \cite[Chapters 15-16]{Chowbookpart2} and \cite[Chapters 23-26]{Chowbookpart3} by Chow, etc.

In this paper, we first extend Hamilton's matrix estimate \eqref{eq Hamilton matrix estimate} for static metrics to the Ricci flow case. Our estimate does not require the parallel Ricci curvature condition and thus should be more applicable. 
\begin{theorem}\label{thm matrix Harnack heat equation}
Let $(M^n,g(t))$, $t\in [0,T]$, be a complete solution to the Ricci flow. 
Let $u:M^n \times [0,T] \to \R$ be a positive solution to the heat equation 
\begin{equation}\label{heat equation} 
u_t -\Delta_{g(t)}u=0.
\end{equation}
Suppose that $(M^n,g(t))$ has nonnegative sectional curvature and $\Ric\leq \kappa g$ for some constant $\kappa>0$. Then
\begin{equation}\label{matrix estimate K geq 0}
        \n_i \n_j 
        \log u + \frac{\kappa }{1-e^{-2\kappa t}} g_{ij} \geq 0,
\end{equation}
    for all $(x,t)\in M\times (0,T)$. 
\end{theorem}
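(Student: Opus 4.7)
The plan is to prove $P_{ij} \geq 0$ for
$$P_{ij}(x,t) := \nabla_i \nabla_j h + \phi(t) g_{ij}, \qquad h := \log u, \qquad \phi(t) := \frac{\kappa}{1-e^{-2\kappa t}},$$
via Hamilton's tensor maximum principle. Two features of $\phi$ are central: $\phi(t) \sim 1/(2t)$ as $t \to 0^+$, guaranteeing $P \gg 0$ near the initial time, and $\phi$ solves the logistic ODE $\phi' = 2\kappa\phi - 2\phi^2$, which is precisely balanced against the hypothesis $\Ric \leq \kappa g$. The coupled Ricci flow and heat equation force $h$ to satisfy $h_t = \Delta h + |\nabla h|^2$.

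The main computation is the evolution equation of $P$. Combining the Bochner-type commutator
$$\Delta(\nabla_i \nabla_j h) - \nabla_i \nabla_j \Delta h = R_i^{\ k} \nabla_k \nabla_j h + R_j^{\ k} \nabla_i \nabla_k h + 2 R_{ikjl} \nabla^k \nabla^l h + (\nabla_i R_{jk} + \nabla_j R_{ik} - \nabla_k R_{ij}) \nabla^k h$$
with the expansion of $\nabla_i\nabla_j|\nabla h|^2$ (which yields $2T_{ik} T_j^{\ k} - 2 R_{ikjm}\nabla^k h\nabla^m h + 2\nabla^k h \nabla_k T_{ij}$, where $T_{ij} := \nabla_i\nabla_j h$) and the Ricci flow variation $\partial_t \Gamma^k_{ij} = -(\nabla_i R_j^{\ k} + \nabla_j R_i^{\ k} - \nabla^k R_{ij})$, a crucial cancellation occurs: the derivative-of-Ricci contributions coming from $\partial_t \Gamma^k_{ij}\,\nabla_k h$ and from the Bochner commutator cancel exactly. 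This is the Ricci-flow cancellation that obviates Hamilton's original parallel-Ricci hypothesis. Writing $T_{ij} = P_{ij} - \phi g_{ij}$, the upshot is
$$(\partial_t - \Delta) P_{ij} - 2\nabla^k h\, \nabla_k P_{ij} = 2 T_{ik}T_j^{\ k} - 2 R_{ikjm}\nabla^k h\, \nabla^m h - R_i^{\ k} T_{kj} - R_j^{\ k} T_{ik} - 2 R_{ikjl} T^{kl} + \phi' g_{ij} - 2\phi R_{ij}.$$

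To close the argument I apply Hamilton's tensor maximum principle: at any first-time unit null eigenvector $v$ of $P$ (so $Tv = -\phi v$), each term on the right evaluates at $(v,v)$ as follows. The quadratic piece gives $2\phi^2$; the Ricci-cross terms combine to $+2\phi \Ric(v,v)$; the full Riemann contraction $-2 R_{ikjl} T^{kl} v^i v^j$ splits via $T = P - \phi g$ into the nonneg piece $-2 R_{ikjl} P^{kl} v^i v^j \geq 0$ (because $Q_{kl} := R_{ikjl}v^iv^j$ is a nonpositive symmetric form under nonneg sectional curvature, owing to the identity $R(v,X,v,X) = -K(v,X)|v\wedge X|^2$, while $P \geq 0$) plus a remainder $-2\phi \Ric(v,v)$; the gradient-curvature term $-2 R(v, \nabla h, v, \nabla h) \geq 0$ by the same identity; and the metric-evolution pieces give $\phi' - 2\phi \Ric(v,v)$. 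The three copies of $\pm 2\phi \Ric(v,v)$ collapse to a single residual $-2\phi \Ric(v,v) \geq -2\phi \kappa$ by the hypothesis $\Ric \leq \kappa g$, and the ODE for $\phi$ yields $2\phi^2 + \phi' - 2\phi\kappa = 0$, so the reaction is nonneg and the first-time null eigenvector is ruled out. The principal obstacle is the sign and index bookkeeping and the identification of the derivative-of-Ricci cancellation under Ricci flow; a secondary technical point is that on a noncompact manifold Hamilton's tensor maximum principle requires either bounded curvature on $[0,T]$ or a cutoff-function argument, but the standing hypotheses are strong enough for a standard approximation.
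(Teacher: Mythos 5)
Your compact-case argument is correct and is essentially the paper's own proof: your $P_{ij}$ is exactly the paper's $Z_{ij}=\nabla_i\nabla_j\log u+c(t)g_{ij}$ with the same logistic ODE $c'=2\kappa c-2c^2$; the cancellation of the $\nabla\Ric$ terms (the $\partial_t\Gamma^k_{ij}\nabla_k\log u$ contribution against the Bochner commutator) is precisely why the paper's commutator identity carries a factor $(1-\varepsilon)$ that vanishes for the forward heat equation; and your null-eigenvector bookkeeping, after unwinding your sign convention for the Riemann tensor, reproduces the paper's evolution inequality for $Z_{ij}$ and the same application of Hamilton's tensor maximum principle, with $c(t)\to\infty$ as $t\to0^+$ giving positivity near the initial time.

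The genuine gap is the complete noncompact case, which the statement covers and which you dismiss as ``a standard approximation.'' Boundedness of curvature is indeed automatic here (nonnegative sectional curvature together with $\Ric\le\kappa g$ bounds all sectional curvatures by $\kappa$), but that is not the obstruction: the tensor maximum principle on a complete noncompact manifold requires a priori growth control on the tensor itself, i.e.\ on $\nabla^2\log u$, and no such control is available for an arbitrary positive solution $u$ --- uniqueness of solutions to the heat equation already fails without growth hypotheses, and a cutoff argument against $P_{ij}$ needs pointwise or integral bounds on $\nabla u$ and $\nabla^2 u$ that you have not supplied. The paper devotes Subsection 3.2 to exactly this point: it uses the Li--Yau estimate and the resulting Harnack inequality under Ricci flow (which exploit the positivity of $u$, not merely its smoothness) to obtain Gaussian-type growth bounds on $u$ at positive times, derives integral bounds on $|\nabla u|^2$ and $|\nabla^2 u|^2$ by parabolic integration by parts, and then works with the smallest eigenvalue $\alpha(x,t)$ of the rescaled tensor $tu\,Z_{ij}$ --- the extra factor $u$ is what makes this quantity controllable by the growth bounds --- so that a scalar maximum principle with an integral growth condition can be applied in the barrier sense, avoiding the tensor maximum principle altogether. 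Without a substitute for these steps, your proposal only establishes the compact case of the theorem.
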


\begin{remark}
Note that \eqref{matrix estimate K geq 0} can be restated equivalently as 
\begin{equation*}
    \n_i \n_j u +\frac{\kappa u}{1-e^{-2\kappa t}} g_{ij} +\n_i u V_j +\n_j u V_i +u V_iV_j \geq 0
\end{equation*}
for any vector field $V$ by choosing the optimal vector field $V=-\n \log u$. Other matrix Li-Yau-Hamilton estimates also admit such equivalent restatements. 
\end{remark}

\begin{remark}
Note that \eqref{matrix estimate K geq 0} is asymptotically sharp as $t\to 0^+$. To see this, one notices
\begin{equation}\label{eq compare factor}
\frac{1}{2t} < \frac{\kappa }{1-e^{-2\kappa t}} < \frac{1}{2t}+\k
\end{equation}
for all $t>0$ and $\k>0$. Applying Theorem \ref{thm matrix Harnack heat equation} to $(M^n,g)=(\R^n,\delta_{ij})$ and letting $\kappa \to 0^+$ produce $\n_i\n_j \log u \geq \frac{1}{2t} \delta_{ij}$, for which the equality is achieved when $u$ is the heat kernel on $\R^n$. 
\end{remark}


\begin{remark}
For proving the unique continuation result in Corollary \ref{Corollary unique continuation}, it is important to obtain a lower bound for $\n_i\n_j \log u$ that is asymptotic to $\tfrac{1}{2t}g_{ij}$ as $t\to 0^+$. 
\end{remark}

Tracing \eqref{matrix estimate K geq 0} yields a gradient estimate. 
\begin{corollary}\label{cor trace}
Under the same assumptions as in Theorem \ref{thm matrix Harnack heat equation}, we have
\begin{equation}\label{eq trace estimate K geq 0}
    \frac{u_t}{u} -\frac{|\n u|^2}{u^2} +\frac{n\k}{1-e^{-2\k t}} =\Delta \log u +\frac{n\k}{1-e^{-2\k t}}  \geq 0.
\end{equation}
\end{corollary}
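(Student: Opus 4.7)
The proof is essentially an immediate consequence of Theorem \ref{thm matrix Harnack heat equation}. The plan is to take the metric trace of the matrix inequality \eqref{matrix estimate K geq 0} and then rewrite the trace in terms of $u$ itself using the heat equation.

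First I would contract \eqref{matrix estimate K geq 0} with $g^{ij}$. Since $(M^n,g(t))$ is $n$-dimensional, $g^{ij}g_{ij}=n$, and $g^{ij}\n_i\n_j \log u = \Delta_{g(t)} \log u$. Because the inequality \eqref{matrix estimate K geq 0} holds as a nonnegative definite symmetric $2$-tensor, contracting with the positive definite tensor $g^{ij}$ preserves the inequality, yielding
\begin{equation*}
\Delta \log u + \frac{n\kappa}{1-e^{-2\kappa t}} \geq 0.
\end{equation*}

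Next I would use the standard identity $\Delta \log u = \frac{\Delta u}{u} - \frac{|\n u|^2}{u^2}$, valid for any positive function $u$. Combining this with the heat equation \eqref{heat equation}, which gives $\Delta u = u_t$, I obtain
\begin{equation*}
\Delta \log u = \frac{u_t}{u} - \frac{|\n u|^2}{u^2}.
\end{equation*}
Substituting back produces the two equivalent expressions in \eqref{eq trace estimate K geq 0}, completing the proof.

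There is no real obstacle here: the content lies entirely in Theorem \ref{thm matrix Harnack heat equation}, and the corollary is just its trace combined with the chain-rule identity for $\Delta\log u$ and the heat equation. The only thing to be careful about is that the factor $\frac{\kappa}{1-e^{-2\kappa t}}$ is independent of the point, so the contraction with $g^{ij}g_{ij}=n$ produces exactly the stated coefficient $\frac{n\kappa}{1-e^{-2\kappa t}}$.
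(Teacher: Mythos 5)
Your proposal is correct and matches the paper's argument: the paper obtains the corollary exactly by tracing the matrix estimate \eqref{matrix estimate K geq 0} with $g^{ij}$, and the rewriting via $\Delta\log u=\frac{\Delta u}{u}-\frac{|\n u|^2}{u^2}$ together with $u_t=\Delta u$ is the standard identity giving the two equivalent forms in \eqref{eq trace estimate K geq 0}.
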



On a general compact manifold, Hamilton \cite{Hamilton93} proved that for any positive solution $u$ to \eqref{heat equation fixed metric}, there exist constants $B$ and $C$ depending only on the geometry of $M$ (in particular the diameter, the volume, and the curvature and covariant derivative of the Ricci curvature) such that $t^{n/2}u\leq B$ and 
\begin{equation}\label{eq Hamilton matrix general case}
\n_i \n_j \log u  +\frac{1}{2t}g_{ij} +C \left(1+\log\left(\frac{B}{t^{n/2}u}\right)\right)g_{ij} \geq 0.
\end{equation}
Here, we also establish such a result for a general compact Ricci flow. 

\begin{theorem}\label{thm matrix Harnack heat equation general case}
Let $(M^n,g(t))$, $t\in [0,T]$, be a compact solution to the Ricci flow.
Let $u:M^n \times [0,T] \to \R$ be a positive solution to the heat equation \eqref{heat equation}. 
Suppose the sectional curvatures of $(M^n,g(t))$ are bounded by $K$ for some $K>0$. 
Then
\begin{equation}\label{eq matrix Harnack general case}
        \n_i \n_j \log u + \left(\frac{1}{2 t} + \frac{1}{t} \beta(t, n, K) \right) g_{ij} \ge 0,
\end{equation}
where 
$$\beta(t, n, K)= 4 \sqrt{n K t}  +C_2 (K+1)t +C_1 \sqrt{K} \diam.$$ Here $C_1>0$ is a numerical constant, $C_2>0$ depends only on the dimension and the non-collapsing constant $v_0 = \inf \{|B(x, 1, g(0))|_{g(0)}: x \in M^n \}$, and 
$$\diam:= \sup_{t\in [0,T]} \diam(M^n,g(t)).$$
\end{theorem}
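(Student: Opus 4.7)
The plan is to adapt Hamilton's tensor maximum principle argument behind \eqref{eq Hamilton matrix general case} to the Ricci flow setting, tracking how curvature and geometric errors enter the correction function $\beta(t,n,K)$. Setting $f = \log u$ and $\phi(t) = \tfrac{1}{2t} + \tfrac{\beta(t)}{t}$, I would study the auxiliary tensor
\[
P_{ij} = \n_i\n_j f + \phi(t)\, g_{ij}
\]
and aim to show $P_{ij} \geq 0$ on $M\times (0,T]$. Using $f_t = \Delta f + |\n f|^2$ together with the Ricci flow evolution of $g$, the connection, and $\Delta$, a direct computation yields an evolution of the schematic form
\[
\heat P_{ij} = 2\n^k f \, \n_k P_{ij} + 2 P_{ik} P_j{}^k + 2 R_{ikjl} P^{kl} - 2\phi R_{ij} + \phi'(t) g_{ij} + E_{ij},
\]
where $E_{ij}$ collects error terms produced by curvature commutators and by time derivatives of Christoffel symbols; schematically, $E_{ij}$ contains contractions of $\Rm$ with $\n\n f$ and of $\n\Rm$ with $\n f$.

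Next, I would invoke Hamilton's tensor maximum principle. At a first hypothetical contact point $(x_0,t_0)$ with null direction $V$, the quadratic term $2 P_{ik} P_j{}^k V^i V^j$ vanishes (since $PV = 0$ there), the spatial first-order term vanishes, and the sectional curvature bound $|\Sect|\leq K$ together with $P \geq 0$ controls $2 R_{ikjl} P^{kl} V^i V^j$. The three terms in $\beta(t,n,K)$ then enter through the estimates needed to close the inequality: the $4\sqrt{nKt}$ term arises from an ODE balance between $\phi'(t)$ and the Ricci and sectional curvature contributions, forcing roughly $\beta'(t) \gtrsim nK$; the $C_2(K+1)t$ term absorbs the contractions in $E_{ij}$ using Shi's local derivative estimate $|\n\Rm|\leq C K^{3/2}$, whose effective domain depends on the non-collapsing constant $v_0$ through a Sobolev inequality; and the $C_1 \sqrt{K}\diam$ term comes from a Cheng-Yau-type Harnack inequality which, using $|\Ric|\leq CK$, bounds $\sup_M f - \inf_M f \leq C\sqrt{K}\diam$, a bound required to close a local gradient estimate of the schematic form $|\n f|^2 \leq C/t + CK(1 + \sup_M f - \inf_M f)$ that feeds back into $E_{ij}$.

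The main obstacle will be that every error in $E_{ij}(V,V)$ must be controlled using quantities compatible with the pointwise tensor maximum principle (pointwise curvature bounds, $|V|^2$, and eigenvalue information on $P$), while the argument simultaneously requires a global Harnack-type control on the oscillation of $f$. This oscillation bound must be established \emph{before} invoking the tensor maximum principle, which is why the diameter and the non-collapsing constant $v_0$ enter as a priori inputs rather than maximum-principle outputs. Coordinating these global inputs with the pointwise tensor inequality, and verifying that the specific expression $\beta(t,n,K) = 4\sqrt{nKt} + C_2(K+1)t + C_1 \sqrt{K}\diam$ is exactly what the bookkeeping produces, will be the most technically delicate step.
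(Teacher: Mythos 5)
There is a genuine gap, and it is exactly the obstruction that forces the paper to take a different route. You propose to run a tensor maximum principle directly on $P_{ij}=\n_i\n_j\log u+\phi(t)g_{ij}$ for a \emph{general} positive solution, and to close the argument with an a priori oscillation bound $\sup_M\log u-\inf_M\log u\le C\sqrt{K}\diam$ of Cheng--Yau type. No such bound exists for solutions of the heat equation: Cheng--Yau controls positive \emph{harmonic} functions, while for caloric functions the only available controls are Hamilton/Li--Yau type estimates involving $\log(A/u)$ with $A=\sup u$ over earlier times, and this quantity is not bounded by the geometry alone (for initial data concentrated near a point, $\log(A/u)\sim d^2/t$, so even for the heat kernel the oscillation is of order $\diam^2/t$, not $\sqrt{K}\diam$). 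This is precisely why Hamilton's static estimate \eqref{eq Hamilton matrix general case} carries the uncontrolled term $\log\lf(B/(t^{n/2}u)\ri)$; your scheme would reproduce that defect rather than remove it. Two further points: under Ricci flow the commutator terms $(\n_iR_{jk}+\n_jR_{ik}-\n_kR_{ij})\n_k\log u$ cancel exactly for the forward heat equation (Proposition 2.1 with $\ve=1$), so the $\n\Rm$ errors you plan to absorb via Shi's estimates do not arise, and the non-collapsing constant $v_0$ is not needed for any Sobolev/Shi input; also, the term $C_1\sqrt{K}\diam$ in $\beta$ does not come from an oscillation bound but from solving a quadratic inequality in the smallest eigenvalue, where $K\diam^2$ appears under a square root.

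The paper's mechanism, which your proposal is missing, has three steps. First, one works with the smallest eigenvalue $\lambda_1$ of $Q_{ij}=t\,\n_i\n_j\log u$ at a minimum point (via a parallel-transported frame), where the Ricci terms cancel and the sectional curvature bound yields the pointwise inequality $2\lambda_1^2\le-\lambda_1-2(2n-1)Kt\lambda_1+2Kt^2\Delta\log u+2Kt^2|\n\log u|^2$. Second, this is closed \emph{only for the heat kernel}: Gaussian upper and lower bounds for the kernel under Ricci flow (needing only scalar curvature bounds and the non-collapsing constant $v_0$, which is where $v_0$ actually enters), combined with the curvature-free gradient estimate and a maximum-principle bound on $t^2\lf(\frac{\Delta u}{u}+|\n\log u|^2\ri)$, give $t^2|\n\log G|^2\lesssim (K+1)t^2+\diam^2$ and $t^2\frac{\Delta G}{G}\lesssim nt+(K+1)t^2+\diam^2$, and solving the resulting quadratic inequality produces $\lambda_1\ge-\tfrac12-\beta(t,n,K)$. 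Third, the estimate is transferred to arbitrary positive solutions through the representation $u(x,t)=\int_M G(x,t,y)u_0(y)\,dy$, using the bilinearity of $u\,\n_i\n_j u-\n_iu\,\n_ju$ and the Cauchy--Schwarz inequality on the double integral. Without this kernel-first-then-transfer structure (or some substitute for it), the pointwise quantities $|\n\log u|^2$ and $\Delta\log u$ appearing in your error terms cannot be controlled, and the tensor maximum principle argument cannot be closed with a $u$-independent $\beta$.
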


Compared to \eqref{eq Hamilton matrix general case} for static metrics, our estimate \eqref{eq matrix Harnack general case} does not depend on the covariant derivatives of Ricci curvature. We have also made the dependence of the constants on curvature and the diameter more explicit. 
The proof of Theorem \ref{thm matrix Harnack heat equation general case} is more involved than that of Theorem \ref{thm matrix Harnack heat equation}, and the key steps are to establish bounds for heat kernel under Ricci flow, $\n \log u$, and $\Delta \log u$. 
In contrast to \eqref{eq Hamilton matrix general case} having the term $\log B/u$ whose order is not clear, \eqref{eq matrix Harnack general case} implies a lower bound of $\n_i \n_j \log u$ that is asymptotic to $C/t$ as $t\to 0^+$, where $C=\frac{1}{2}+C_1\sqrt{K}\diam$. Therefore, \eqref{eq matrix Harnack general case} can be regarded as a sharp version of Hamilton's estimate \eqref{eq Hamilton matrix general case}, just like \cite[Theorem 1.1]{Zhang21} is a sharp version of the Li-Yau estimate \cite[Theorem 1.3]{LY86}. Finally, we remark that the $C/t$-type bound does not hold for noncompact manifolds in general, such as on $\mathbb{H}^3$, the three-dimensional hyperbolic space, as explained in \cite{Zhang21}. 

A similar argument yields an improvement of Hamilton's classical matrix Harnack inequality \eqref{eq Hamilton matrix general case} in the static case on compact manifolds. 

\begin{theorem}\label{thm improve Hamilton}
Let $(M^n,g)$ be a closed Riemannian manifold and let $u:M^n \times [0,T] \to \R$ be a positive solution to the heat equation \eqref{heat equation fixed metric}. Suppose that the sectional curvatures of $M$ are bounded by $K$ and $|\n \Ric|\leq L$, for some $K,L>0$. Then
\begin{eqnarray}
 \n_i \n_j \log u  +
 \left(\frac{1}{2t}+(2n-1)K +\frac{\sqrt{3}}{2}L^{\frac{2}{3}} +\frac{1}{2t}\gamma(t,n,K,L)\right)g_{ij} \geq 0
\end{eqnarray}
for all $(x,t)\in M\times (0,T)$, where
\begin{eqnarray*}
 \gamma(t,n,K,L) &:=& \sqrt{nKt(2+(n-1)Kt)}  +\sqrt{C_3(K+L^{\frac{2}{3}})t(1+Kt)(1+K+Kt)}\\
&& + \left(2K(2+(n-1)Kt)+\frac{3}{2}L^{\frac{2}{3}}(1+(n-1)Kt) \right)\Diam, 
\end{eqnarray*}
$C_3>0$ depends only on the dimension $n$, and $\Diam$ denotes the diameter of $(M^n,g)$. 
\end{theorem}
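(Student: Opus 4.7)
The plan is to adapt the strategy used for Theorem \ref{thm matrix Harnack heat equation general case} to the static setting, where the Ricci flow is no longer available to absorb the $\n\Ric$ terms and these must instead be controlled by the pointwise hypothesis $|\n \Ric| \le L$. Let $f = \log u$, so that $f_t = \D f + |\n f|^2$, and set $f_{ij} := \n_i\n_j f$. The standard commutator identity $[\n_i\n_j, \D]$ applied to $f$, together with differentiating $f_t = \D f + |\n f|^2$ twice, yields
\begin{equation*}
\heat f_{ij} = 2\n_k f \, \n_k f_{ij} + 2 f_{ik} f_{kj} - 2 R_{ikjl} f_{kl} - R_{ik} f_{jk} - R_{jk} f_{ik} + \bigl(\n_i R_{jk} + \n_j R_{ik} - \n_k R_{ij}\bigr)\n_k f.
\end{equation*}
Setting $P_{ij}(t) := f_{ij} + \phi(t) g_{ij}$, where $\phi$ equals the bracketed expression appearing in the statement, reduces the theorem to showing that $P \ge 0$ as a symmetric $(0,2)$-tensor on $M \times (0,T]$.

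I would then invoke Hamilton's tensor maximum principle. At a first space--time point $(x_0,t_0)$ where the smallest eigenvalue of $P$ vanishes in the direction of a unit vector $e$, we have $f_{ee}(x_0,t_0) = -\phi(t_0)$ and, retaining only the $k=e$ term in the sum, $f_{ek}f_{ke} \ge f_{ee}^2 = \phi(t_0)^2$. Contracting the evolution equation above with $e^i e^j$, the inequality $P\ge 0$ can fail to propagate only if
\begin{equation*}
\phi'(t_0) + 2\phi(t_0)^2 \;<\; c_1 n K \bigl(|\D f|(x_0,t_0) + \phi(t_0)\bigr) + c_2 L \, |\n f|(x_0,t_0),
\end{equation*}
where the curvature pieces $-2R_{ekel} f_{kl}$ and $-2R_{ek} f_{ek}$ are bounded using $|\Rm|\le CK$ together with the one-sided bound $f_{kl} \ge -\phi\, g_{kl}$ valid at $(x_0,t_0)$.

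To close the contradiction, three auxiliary estimates on $M\times(0,T]$ are required, all in terms of $n,K,L,t$, and $\Diam$. First, an upper bound for $|\n \log u|$: Hamilton's gradient estimate for static closed manifolds, combined with a Harnack-type propagation of $\sup u$ along a geodesic of length at most $\Diam$, produces the $\sqrt{nKt(2+(n-1)Kt)}$ summand in $\gamma$ together with the diameter piece $\bigl(2K(2+(n-1)Kt) + \tfrac{3}{2}L^{2/3}(1+(n-1)Kt)\bigr)\Diam$. Second, an upper bound for $\D \log u$: a separate maximum-principle argument on a weighted quantity such as $t\,\D f + \alpha t|\n f|^2$ yields the contribution $\sqrt{C_3(K+L^{2/3}) t (1+Kt)(1+K+Kt)}$ in $\gamma$ as well as the dimension-free summands $(2n-1)K$ and $\tfrac{\sqrt{3}}{2}L^{2/3}$ sitting outside $\tfrac{1}{2t}\gamma$. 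Third, an $L^\infty$ control of $u$ via Gaussian heat-kernel estimates on closed manifolds, needed to replace the opaque factor $\log(B/(t^{n/2}u))$ in \eqref{eq Hamilton matrix general case} by an explicit quantitative dependence on $\Diam,K,L,t$.

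With the displayed choice $\phi(t) = \tfrac{1}{2t} + (2n-1)K + \tfrac{\sqrt{3}}{2}L^{2/3} + \tfrac{1}{2t}\gamma(t,n,K,L)$, the asymptotics $\phi\sim \tfrac{1}{2t}$ and $2\phi^2 \sim \tfrac{1}{2t^2}$ as $t\to 0^+$ balance $-\phi'$ to leading order, and the correction summands in $\gamma$ are tuned precisely so that $2\phi^2 + \phi'$ majorises the three error contributions above. The main obstacle will be the second auxiliary estimate, namely obtaining the sharp explicit upper bound for $\D\log u$ with the claimed $L^{2/3}$ and diameter dependence: Hamilton's original argument sidesteps this via the weaker inequality \eqref{eq Hamilton matrix general case}, in which the hard quantity is buried inside $\log(B/(t^{n/2}u))$, whereas here it must be made fully quantitative with exactly the exponents appearing in $\gamma$. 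A secondary nuisance is the bookkeeping required to carry all implicit constants through the three interlocking maximum-principle arguments so that their sum reproduces $\gamma$ term by term rather than a larger majorant.
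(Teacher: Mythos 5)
Your Step 1 (evolution equation for $\n_i\n_j\log u$ with the extra $\n\Ric$ term, Young's inequality to convert $L|\n v|$ into $L^{2/3}|\n v|^2+L^{4/3}$, and a maximum-principle/quadratic balancing in the smallest eigenvalue direction) matches the paper's Section 7 in substance; whether one runs it as a null-eigenvector "first time" argument for $P_{ij}=f_{ij}+\phi(t)g_{ij}$ or, as the paper does, at a negative minimum of the smallest eigenvalue of $Q_{ij}=tH_{ij}$, is cosmetic. The genuine gap is in how you propose to close the argument: you need, on all of $M\times(0,T]$, upper bounds for $|\n\log u|^2$ and $\Delta\log u$ depending only on $n,K,L,t,\Diam$ and \emph{not on the solution $u$}, and your proposal does not supply them — you yourself flag the $\Delta\log u$ bound as "the main obstacle." This is exactly the obstruction that forces the factor $\log\bigl(B/(t^{n/2}u)\bigr)$ into Hamilton's original estimate: Hamilton's gradient and Laplacian estimates control $t|\n\log u|^2$ and $t(\Delta\log u+2|\n\log u|^2)$ only in terms of $\log(A/u)$ with $A=\sup u$ over an earlier time band, and for an arbitrary positive solution this ratio is not a priori geometric. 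Your third auxiliary ingredient (Gaussian heat-kernel estimates) bounds the kernel $G$, not $\sup u/u$ for a general $u$, so as stated the tensor maximum principle applied directly to a general solution cannot be closed.

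The paper resolves this by a two-stage strategy that is absent from your proposal: it first proves the eigenvalue inequality for $u=G(x,t,y)$ the heat kernel, where $\log(A/u)$ \emph{is} explicitly controlled by the two-sided Gaussian bounds of \cite{Zhang21} (this is where the $\Diam^2/t$ and $(1+K+Kt)$ terms in $\gamma$ originate), and then transfers the matrix inequality to every positive solution via the representation formula $u(x,t)=\int_M G(x,t,y)u_0(y)\,dy$: differentiating under the integral, inserting the kernel estimate for $\n_1\n_1 G$, and absorbing the cross term by Cauchy–Schwarz (Step 3 of the proof of Theorem \ref{thm matrix Harnack heat equation general case}). If you instead try to patch your route for general $u$ on a closed manifold — e.g. using monotonicity of $\sup_M u(\cdot,t)$ and the Li–Yau Harnack with $\Ric\ge-(n-1)K$ to bound $\log(A/u)$ — you could obtain \emph{some} estimate of this shape, but the Harnack constant $\alpha>1$ degrades the coefficient of $\Diam^2/t$ beyond $1/2$ and the explicit constants in the stated $\gamma(t,n,K,L)$ (and the summands $(2n-1)K$, $\tfrac{\sqrt3}{2}L^{2/3}$, which in the paper come out of the quadratic-root computation, not out of the $\Delta\log u$ bound) would not be reproduced. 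So either add the heat-kernel-first argument plus the representation-formula transfer, or accept a strictly larger correction term than the theorem claims.
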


Notice that Hamilton's original inequality \eqref{eq Hamilton matrix general case} has the term $C\log \left(\frac{B}{t^{\frac n 2}u}\right)$, where $B$ and $C$ depend on the geometry of the manifold and $B$ is greater than $t^{\frac n 2}u$, which itself is an additional assumption. The constant $C$ is equal to zero only when $M$ has nonnegative sectional curvature and parallel Ricci curvature. Otherwise, for this $\log$ term, we do not have any definite control on the order $q$ of $-t^{-q}$ coming out of this term, for general positive solutions, making this lower bound less practical. In Theorem \ref{thm improve Hamilton}, we manage to replace this term with a $C/t$ term, with $C$ depending only on $K$, $L$, and $\Diam$, which is of the correct order for $t$.

Next, we prove a matrix Li-Yau-Hamilton estimate for positive solutions to the backward conjugate heat equation coupled with the Ricci flow. 

\begin{theorem}\label{thm matrix Harnack backward conjugate heat equation}
Let $(M^n,g(t))$, $t\in [0,T]$, be a solution to the Ricci flow with nonnegative complex sectional curvature and $\Ric \leq \k g$ for some $\k>0$.
Let $u:M^n \times [0,T] \to \R$ be a positive solution to the backward conjugate heat equation \eqref{backward conjugate heat equation} on $M\times [0,T]$.
Suppose $\eta:(0,T) \to (0,\infty)$ is a $C^1$ function satisfying the ordinary differential inequality
\begin{equation}\label{eq c(t) ODE}
    \eta' \leq 2\eta^2 -2\kappa \eta -\frac{\kappa}{t} 
\end{equation}
on $(0,T)$ and that $\eta(t) \to \infty$ as $t\to T$. 
Then
\begin{equation}\label{eq matrix estimate KC geq 0 eta}
R_{ij} -\n_i\n_j \log u -\eta g_{ij} \leq 0
\end{equation}
for all $(x,t) \in M \times (0,T)$, where $R_{ij}$ denotes the Ricci curvature. 
\end{theorem}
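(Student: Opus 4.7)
The plan is to prove \eqref{eq matrix estimate KC geq 0 eta} by Hamilton's tensor maximum principle, run \emph{backwards in $t$} starting from the endpoint $T$. Substituting $f := -\log u$ converts the backward conjugate heat equation \eqref{backward conjugate heat equation} into
\begin{equation*}
\p_t f + \Delta f = |\n f|^2 - R,
\end{equation*}
and recasts the target as $Q_{ij} \leq 0$, where
\begin{equation*}
Q_{ij} := P_{ij} - \eta g_{ij},\qquad P_{ij} := R_{ij} + \n_i\n_j f = R_{ij} - \n_i\n_j \log u.
\end{equation*}
The blow-up $\eta(t) \to \infty$ as $t \to T$, combined with the smoothness of $u$, $R_{ij}$ and their derivatives on compact subintervals of $[0,T]$, guarantees that $Q_{ij}(\cdot,t)$ is strictly negative-definite for $t$ sufficiently close to $T$; this is the ``terminal'' data from which the maximum principle will propagate back to the whole interval $(0,T)$.

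The computational core is deriving an evolution inequality for $P_{ij}$ under the coupled Ricci flow and $f$-equation. I would combine Hamilton's Lichnerowicz-type formula $\p_t R_{ij} = \Delta R_{ij} + 2 R_{ikjl} R^{kl} - 2 R_{ik} R^k{}_j$, the Ricci-flow Christoffel variation $\p_t \Gamma^k_{ij} = -g^{kl}(\n_i R_{jl} + \n_j R_{il} - \n_l R_{ij})$, and the Bochner-type commutator
\begin{equation*}
\Delta \n_i \n_j f = \n_i \n_j \Delta f + 2 R_{ikjl} \n^k \n^l f - R_{ik} \n^k \n_j f - R_{jk} \n^k \n_i f + (\n_i R_{jk} + \n_j R_{ik} - \n_k R_{ij}) \n^k f.
\end{equation*}
After reorganization using the second Bianchi identity, these should collapse into an equation for $P_{ij}$ of the schematic form
\begin{equation*}
\lf(\p_t + \Delta - 2 \n f \cdot \n\ri) P_{ij} = 2 R_{ikjl} P^{kl} - 2 P_{ik} P^k{}_j + (\text{terms linear in } P \text{ with Ricci coefficients}).
\end{equation*}
Subtracting $\eta g_{ij}$ and using $\p_t g_{ij} = -2 R_{ij}$ produces the analogous inequality for $Q_{ij}$, carrying the ODE residual $(\eta' - 2\eta^2 + 2\kappa \eta + \kappa/t) g_{ij}$.

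The last step is the tensor maximum principle. Suppose toward contradiction that there is a first $(x_0,t_0) \in M \times (0,T)$ and null eigenvector $v$ at which some eigenvalue of $Q$ vanishes from below; at such a point $Pv = \eta v$, so contracting the $Q$-evolution with $v \otimes v$ replaces the Riccati-type quadratic by $\eta^2 |v|^2$. The curvature reaction splits as $R_{ikjl} P^{kl} v^i v^j = R_{ikjl} Q^{kl} v^i v^j + \eta R_{ij} v^i v^j$; the $Q$-piece is exactly where nonnegative complex sectional curvature enters -- it is the real-case analogue of the nonnegative bisectional hypothesis exploited by Cao--Ni \cite{CN05} in the K\"ahler setting, and is what gives the correct sign for this quadratic curvature term at null eigenvectors of $Q$. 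The remaining $\eta \Ric(v,v)$ piece and all linear-in-$\Ric$ contributions are controlled by $0 \leq \Ric \leq \kappa g$; after this bookkeeping, the reaction side collapses exactly to the ODE expression $2\eta^2 - 2\kappa \eta - \kappa/t - \eta'$, which has the right sign by hypothesis \eqref{eq c(t) ODE}, producing the required contradiction. The main obstacle is precisely this algebraic verification: confirming that nonnegative complex sectional curvature (rather than merely nonnegative sectional curvature) is the exact hypothesis making the full quadratic reaction cooperate at a null eigenvector of $Q$, and keeping the numerical constants sharp enough that the leftover $\kappa$-terms match \eqref{eq c(t) ODE} on the nose. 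A secondary difficulty is that $M$ is merely complete, not compact; this should be resolved by the standard cutoff/barrier argument together with the $\Ric \leq \kappa g$ bound.
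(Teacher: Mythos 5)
Your reduction to a tensor maximum principle run backwards from $t=T$, with $Z_{ij}=R_{ij}-\n_i\n_j\log u-\eta g_{ij}$ and the blow-up of $\eta$ providing the terminal negativity, is exactly the paper's framework. The gap is in the computational core: the evolution of $P_{ij}=R_{ij}-\n_i\n_j\log u$ does \emph{not} collapse into the closed Riccati-type form you wrote down. Carrying out the computation (Ricci flow evolution of $R_{ij}$ plus the commutator formula applied to $v=\log u$, which solves $(\p_t+\Delta)v=|\n v|^2+R$) leaves irreducible extra terms that cannot be expressed through $P_{ij}$ at all, namely
\begin{equation*}
\Delta R_{ij}-\tfrac12\n_i\n_j R+2R_{ikjl}R_{kl}-R_{ik}R_{jk},\qquad \bigl(2\n_k R_{ij}-\n_iR_{jk}-\n_jR_{ik}\bigr)\n_k v,\qquad R_{ikjl}\n_k v\,\n_l v .
\end{equation*}
These are precisely Hamilton's Harnack quantities $M_{ij}$ and $P_{kij}$ for the Ricci flow, contracted against $\n v$, and the paper's proof closes the argument only by invoking Brendle's Harnack inequality $M(w,w)+2P(\n v,w,w)+\Rm(\n v,w,\n v,w)\geq 0$, valid under nonnegative \emph{complex} sectional curvature. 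Your proposal never brings in this ingredient, so the ``algebraic verification'' you defer to cannot succeed: without the Harnack estimate there is no way to give these curvature-derivative and $\Rm(\n v,\cdot,\n v,\cdot)$ terms a sign at a null eigenvector of $Z$.

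Two related misattributions follow from this omission. First, your claim that nonnegative complex sectional curvature is what signs $R_{ikjl}Q^{kl}v^iv^j$ at a null eigenvector is off: that term only requires nonnegative sectional curvature (it is a nonnegative combination of sectional curvatures once $-Q\geq 0$), and indeed in the paper the null-eigenvector check for the reaction $Z^2_{ij}-R_{ikjl}Z_{kl}-\tfrac12 R_{ik}Z_{jk}-\tfrac12 R_{jk}Z_{ik}+2\eta Z_{ij}$ is routine. The complex sectional curvature hypothesis enters solely through Brendle's Harnack. Second, the $-\kappa/t$ term in the ODE \eqref{eq c(t) ODE} is not produced by bookkeeping of $\eta\Ric(v,v)$ terms as you suggest; it is forced by the $\tfrac{1}{2t}R_{ij}$ summand inside the Harnack quantity $M_{ij}$, which survives after applying Brendle's inequality and is absorbed using $\Ric\leq\kappa g$. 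Your outline has no mechanism generating a $1/t$ contribution, which is a symptom of the missing Harnack input rather than a matter of constants.
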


\begin{remark}
On ancient Ricci flows, we can get rid of the $\frac{\k}{t}$ term in \eqref{eq c(t) ODE} and 
prove the cleaner estimate 
\begin{equation*}\label{eq matrix estimate KC geq 0 ancient}
R_{ij} -\n_i\n_j \log u -\frac{\k}{1-e^{-2\k(T-t)}} g_{ij} \leq 0.
\end{equation*}
See Theorem \ref{thm matrix LYH ancient} for details. 
\end{remark}

We give an explicit choice of $\eta(t)$. 
\begin{corollary}\label{corollary matrix Harnack conjugate}
Under the same assumptions as in Theorem \ref{thm matrix Harnack backward conjugate heat equation}, we have 
\begin{equation}\label{eq matrix estimate KC geq 0}
R_{ij} -\n_i\n_j \log u -\left(\frac{\k}{1-e^{-2\k(T-t)}} +\sqrt{\frac{\k}{2t}} \right) g_{ij} \leq 0.
\end{equation}
\end{corollary}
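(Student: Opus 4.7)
The plan is to reduce Corollary \ref{corollary matrix Harnack conjugate} to a direct verification that the explicit function
\[
\eta(t) := \frac{\k}{1-e^{-2\k(T-t)}} + \sqrt{\frac{\k}{2t}}
\]
meets the two hypotheses demanded by Theorem \ref{thm matrix Harnack backward conjugate heat equation}: the ODE inequality \eqref{eq c(t) ODE} and the blow-up $\eta(t)\to\infty$ as $t\to T$. Once these are checked, the corollary follows immediately by substitution into \eqref{eq matrix estimate KC geq 0 eta}.

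I would split $\eta = \eta_1 + \eta_2$ with $\eta_1(t) = \k/(1-e^{-2\k(T-t)})$ and $\eta_2(t) = \sqrt{\k/(2t)}$, and treat the two terms separately. A direct differentiation shows $\eta_1' = 2\eta_1^2 - 2\k\eta_1$, so $\eta_1$ solves the ODE corresponding to \eqref{eq c(t) ODE} with the $\k/t$ term removed; in particular $\eta_1 \ge \k$ for $t<T$ and $\eta_1(t)\to\infty$ as $t\to T$. Similarly, $\eta_2' = -\eta_2/(2t) < 0$ and $2\eta_2^2 = \k/t$, so $\eta_2$ is precisely tailored to absorb the inhomogeneous term $\k/t$ in \eqref{eq c(t) ODE}.

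Combining these two computations gives
\[
2\eta^2 - 2\k\eta - \frac{\k}{t} = \bigl(2\eta_1^2 - 2\k\eta_1\bigr) + \Bigl(2\eta_2^2 - \frac{\k}{t}\Bigr) + 2\eta_2(2\eta_1 - \k) = \eta_1' + 2\eta_2(2\eta_1-\k).
\]
Since $\eta_1 \ge \k$, the cross term $2\eta_2(2\eta_1-\k)$ is nonnegative, and since $\eta_2' < 0$ we conclude
\[
\eta' = \eta_1' + \eta_2' \le \eta_1' + 2\eta_2(2\eta_1-\k) = 2\eta^2 - 2\k\eta - \frac{\k}{t},
\]
which is exactly \eqref{eq c(t) ODE}. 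Combined with $\eta(t)\to\infty$ as $t\to T$, Theorem \ref{thm matrix Harnack backward conjugate heat equation} then yields \eqref{eq matrix estimate KC geq 0}.

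There is no real obstacle here; the only modest piece of guesswork is the choice of the correction $\eta_2 = \sqrt{\k/(2t)}$, which is forced by the requirement $2\eta_2^2 = \k/t$ so that the inhomogeneous term is matched exactly. Any additive perturbation of $\eta_1$ of the form $c\,t^{-1/2}$ would be a natural candidate, and the specific constant is pinned down by this algebraic identity.
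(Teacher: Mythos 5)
Your proposal is correct and takes essentially the same route as the paper: the paper's proof simply asserts that $\eta_0(t)=\frac{\k}{1-e^{-2\k(T-t)}}+\sqrt{\frac{\k}{2t}}$ satisfies $\eta_0'\leq 2\eta_0^2-2\k\eta_0-\frac{\k}{t}$ and blows up as $t\to T$, then invokes Theorem \ref{thm matrix Harnack backward conjugate heat equation}. Your splitting $\eta=\eta_1+\eta_2$ with the identities $\eta_1'=2\eta_1^2-2\k\eta_1$, $2\eta_2^2=\k/t$, $\eta_2'<0$, and the nonnegative cross term $2\eta_2(2\eta_1-\k)$ is a correct and complete verification of exactly that assertion.
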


One motivation for bounding $R_{ij} -\n_i\n_j \log u$ from above comes from the work of Baldauf and Kim \cite{BK22}. They defined a parabolic frequency under Ricci flow and proved its monotonicity with a correction factor that depends on the upper bound of $R_{ij} -\n_i\n_j \log K$, where $K$ is the fundamental solution to \eqref{backward conjugate heat equation}. As an application of \eqref{eq matrix estimate KC geq 0}, we give an explicit correction factor in Proposition \ref{prop PF conjugate} in the nonnegative complex sectional curvature case.  
In addition, we believe such matrix Li-Yau-Hamilton estimates are of their own interest and should be useful in other situations. 


We need to assume nonnegative complex sectional curvature in Theorem \ref{thm matrix Harnack backward conjugate heat equation} because the proof uses Brendle's generalization \cite{Brendle09} of Hamilton's Harnack inequality for the Ricci flow \cite{Hamilton93JDG}. This feature is shared by the proof of \eqref{eq Ni matrix estimate forward} in \cite{Ni07}, which makes use of H.D. Cao's Harnack estimate for the K\"ahler-Ricci flow \cite{Cao92}. We also note that it suffices to assume $(M,g(0))$ has bounded nonnegative complex sectional curvature, as nonnegative complex sectional curvature is preserved by Ricci flows with bounded curvature (see Brendle and Schoen \cite{BS09} and Ni and Wolfson \cite{NWolfson07}). 

Finally, we would like to mention that since the pioneer works of Li and Yau \cite{LY86}, Hamilton \cite{Hamilton93}, Perelman \cite{Perelman1}, and others, various gradient and Hessian estimates for positive solutions to heat-type equations, with either fixed or time-dependent metrics, have been established by many authors, including Guenther \cite{Guenther02}, Ni \cite{Ni04Entropy, Ni04Addenda}, Cao and Ni \cite{CN05}, Ni \cite{Ni07}, Kotschwar \cite{Kotschwar07}, the second author \cite{Zhang06}, Souplet and the second author \cite{SZ06}, Kuang and the second author \cite{KZ08}, Cao \cite{Cao08},  X. Cao and Hamilton \cite{CH09}, Liu \cite{Liu09}, Bailesteanu, X. Cao, and Pulemotov \cite{BCP10}, X. Cao and the second author \cite{CZ11}, Li and Xu \cite{LX11}, Han and the second author \cite{HZ16}, Zhu and the second author \cite{ZZ17, ZZ18}, Huang \cite{Huang21}, and Yu and Zhao \cite{YZ20}, just to name a few. Our matrix Li-Yau-Hamilton estimates are new additions to the literature.

\subsection{Parabolic Frequency}
The elliptic frequency
$$I_A(r)=\frac{r \int_{B_r(p)} |\n u|^2 dx }{\int_{\p B_r(p)} u^2 dA}$$
for a harmonic function $u$ on $\R^n$ was introduced by Almgren \cite{Almgren79}. He used its monotonicity to study the local regularity of (multiple-valued) harmonic functions and minimal surfaces. The monotonicity of $I_A(r)$ also played an important role in studying unique continuation properties of elliptic operators by Garafalo and Lin \cite{GL86, GL87} and in estimating the size of nodal sets of solutions to elliptic and parabolic equations by Lin \cite{Lin91}. When $\R^n$ is replaced by a Riemannian manifold, Garofalo and Lin \cite{GaLin86} proved that there exist constants $R_0$ and $\Lambda$, depending only on the Riemannian metric, such that $e^{\Lambda r}I_A(r)$ is monotone nondecreasing in $(0, R_0)$ (see also Mangoubi \cite[Theorem 2.2]{Mangoubi13}). Frequency monotonicity is also crucial in the work of Logunov \cite{Logunov18b, Logunov18a} in estimating the size of nodal sets for harmonic functions and eigenfunctions on manifolds. In addition, frequency functions also play a crucial role in studying the dimension of the space of harmonic functions of polynomial growth on complete noncompact manifolds; see Colding and Minicozzi \cite{CM97a, CM97b}, G. Xu \cite{Xu16}, J.Y. Wu and P. Wu \cite{WW23}, Mai and Ou \cite{MO22}, and the references therein. For more applications, we refer the reader to the books \cite{HL} and \cite{Zelditch08}.

Poon \cite{Poon96} introduced the parabolic frequency
\begin{equation*}
 I_P(t)=\frac{t \int_{\R^n} |\nabla u|^2(x,T-t) G(x,x_0,t)\ dx}{\int_{\R^n} u^2(x,T-t) G(x,x_0,t) \ dx},
\end{equation*}
where $u$ solves the heat equation on $\R^n \times [0,T]$ and $G(x,x_0,t)$ is the heat kernel with a pole at $(x_0,0)$. He proved that $I_P(t)$ is monotone nondecreasing and derived some unique continuation results out of it. The monotonicity of $I_P(t)$ remains valid when $\R^n$ is replaced by a complete Riemannian manifold with nonnegative sectional curvature and parallel Ricci curvature, as remarked by Poon \cite[page 530]{Poon96} and proved independently by Ni \cite{Ni15}. The curvature conditions are needed to use Hamilton's matrix estimate \eqref{eq Hamilton matrix estimate}. 

Without assuming the restrictive parallel Ricci condition, Wang and the first author \cite{LW19} showed that $t e^{\sqrt{t}}I_P(t)$ is monotone nondecreasing for a short period of time on compact manifolds with nonnegative sectional curvature, which also produces a unique continuation result. They also defined the parabolic frequency
\begin{equation*}
    I_{LW}(t)=\frac{t \int_M |\n v(x,t)|^2 R d\mu_{g(t)}}{\int_M v^2(x,t) R d\mu_{g(t)}},
\end{equation*}
where $v$ solves the backward heat equation $v_t+\Delta_{g(t)} v=0$ coupled with a two-dimensional Ricci flow with positive scalar curvature. Using that $R$ satisfies the forward heat equation \eqref{forward conjugate heat equation} and admits a matrix Li-Yau-Hamilton estimate due to Hamilton (see \cite[Proposition 10.20]{CLN}), they showed that $I_{LW}(t)$ is monotone nondecreasing. 

Colding and Minicozzi \cite{CM22} proved that the parabolic frequency 
\begin{equation*}
    I_{CM}(t) = \frac{\int_M |\n u(x,t)|^2 e^{-f}d\mu_g}{\int_M u^2(x,t)e^{-f}d\mu_g},
\end{equation*} 
where $f$ is a smooth function on a Riemannian manifold $M^n$ and $u: M^n\times [0,T] \to \R^N$ solves the weighted heat equation $u_t-\Delta_f u=0$, is monotone nonincreasing without any curvature assumptions. The special case $f\equiv 1$ and $M^n$ being a bounded domain in $\R^n$ was treated in \cite[pages 61-62]{EvansPDEbook2nd} to prove the backward uniqueness of the heat equation with specified boundary values. 
They also defined a parabolic frequency for shrinking gradient Ricci solitons and showed its monotonicity with no curvature restrictions. 

For a general Ricci flow, Baldauf and Kim \cite{BK22} defined the parabolic frequency 
\begin{equation*}
    I_{BK}(t):=\frac{(T-t) \int_M |\n u(x,t)|^2 K(x,x_0,t) d\mu_{g(t)}}{\int_M u^2(x,t) K(x,x_0,t) d\mu_{g(t)}},
\end{equation*}
where $K$ is the backward conjugate heat kernel with a pole at $(x_0,T)$ and $u$ solves the heat equation \eqref{heat equation}. They were able to show that $e^{\int \frac{1-k(t)}{T-t} dt} \cdot I_{BK}(t)$ is monotone nonincreasing, where $k(t)$ is a time-dependent function such that
$$\Ric-\n^2 \log K \leq \frac{k(t)}{2(T-t)}.$$
More recently, C. Li, Y. Li, and K. Xu \cite{LLX22} studied the monotonicity of $I_{BK}(t)$ and its generalizations under the Ricci flow and the Ricci-harmonic flow. They obtained monotonicity formulas with correction factors depending on the bounds of the Bakry-\'Emery Ricci curvature or the Ricci curvature. It is also worth mentioning that H.Y. Liu and P. Xu \cite{LX22} investigated the monotonicity of a parabolic frequency for weighted $p$-Laplacian heat equation with $p\geq 2$ on Riemannian manifolds and obtained generalizations of \cite{CM22}. Using \eqref{eq Ni matrix estimate forward}, they generalized a frequency monotonicity formula of Ni \cite{Ni15} on K\"ahler manifolds to the setting of K\"ahler-Ricci flow. 

In this paper, we define a parabolic frequency for solutions to the backward conjugate heat equation \eqref{backward conjugate heat equation} coupled with the Ricci flow and prove its monotonicity up to certain correction factors. 
We shall use $G(x,x_0,t)$, the heat kernel with a pole at $(x_0,0)$, as a weight and define the following quantities:
\begin{eqnarray*}\label{eq def I D S}
I(t) &=& \int_M u^2(x,t) G(x,x_0, t) d\mu_{g(t)},  \\ \nonumber
D(t) &=& \int_M |\nabla u(x,t)|^2 G(x, x_0,t)d\mu_{g(t)}, \\ \nonumber
S(t) &=& \int_M u^2(x,t)R(x,t)  G(x, x_0,t) d\mu_{g(t)}.
\end{eqnarray*}
The first two quantities are direct generalizations of the terms in Poon's parabolic frequency $I_P(t)$ in the static case. The third one is new due to the Ricci flow. 
A natural generalization of Poon's frequency $I_P(t)$ is 
\begin{equation}\label{eq def F(t)}
F(t) : =\frac{I'(t)}{I(t)}=\frac{2 D(t) + S(t)}{I(t)}. 
\end{equation}

In the nonnegative sectional curvature case, we prove that
\begin{theorem}\label{thm PF sec geq 0}
Let $(M^n,g(t))$, $t \in [0,T]$, be a complete Ricci flow and let $u: M^n\times [0,T] \to \R$ be a solution to the backward conjugate heat equation \eqref{backward conjugate heat equation}. 
Suppose that $(M^n,g(t))$ has nonnegative sectional curvature and $\Ric \leq \kappa g$ for some constant $\kappa>0$. Then
\begin{equation}\label{eq PF F monotone sec geq 0}
e^{(n-2)\k t}(1-e^{-2\k t})F(t),
\end{equation}
where $F(t)$ is defined in \eqref{eq def F(t)}, in monotone nondecreasing on $[0,T]$. 
\end{theorem}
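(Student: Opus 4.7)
The plan is to prove the equivalent inequality $J(t)^2/I(t) \leq J'(t) + (\phi'(t)/\phi(t)) J(t)$, where $J(t) := I'(t) = 2D(t) + S(t)$ and $\phi(t) = e^{(n-2)\kappa t}(1 - e^{-2\kappa t})$, since this is equivalent to $(\phi F)' \geq 0$. First I verify $I'(t) = J(t)$ by direct calculation: using $\partial_t g = -2\Ric$, $\partial_t d\mu = -R\, d\mu$, $G_t = \Delta G$, the conjugate heat equation $u_t + \Delta u = Ru$, and integration by parts against $G\, d\mu$, one obtains $I'(t) = \int (2|\nabla u|^2 + Ru^2)\, G\, d\mu$.

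Next I compute $J'(t) = 2D'(t) + S'(t)$. For $D'(t)$ I apply the Bochner identity to $|\nabla u|^2$ along the coupled Ricci--conjugate heat system, producing an integrand containing $|\nabla^2 u|^2$, $\Ric(\nabla u, \nabla u)$, $R|\nabla u|^2$, and $u\langle\nabla u, \nabla R\rangle$. For $S'(t)$ I use the scalar curvature evolution $R_t = \Delta R + 2|\Ric|^2$, yielding terms involving $R^2 u^2$, $u^2|\Ric|^2$, $u^2 \Delta R$, $R|\nabla u|^2$, and $u\langle\nabla R, \nabla u\rangle$.

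To bound $J(t)^2/I(t)$ from above, I introduce the potential $\phi := -\log G$ and the Bakry--\'Emery Laplacian $\Delta_\phi := \Delta - \nabla\phi \cdot \nabla$, which is formally self-adjoint on $L^2(G\, d\mu)$. Rewriting $J = \int u(-2\Delta_\phi u + Ru)\, G\, d\mu$, Cauchy--Schwarz in $L^2(G\, d\mu)$ gives $J^2/I \leq \int (-2\Delta_\phi u + Ru)^2 G\, d\mu$. I expand the right-hand side via the Bakry--\'Emery Bochner identity $\int (\Delta_\phi u)^2 G\, d\mu = \int |\nabla^2 u|^2 G\, d\mu + \int (\Ric + \nabla^2\phi)(\nabla u, \nabla u)\, G\, d\mu$ and subtract from $J'(t)$; then, performing a further integration by parts on the terms containing $\nabla R$ and $\Delta R$ via the contracted Bianchi identity $\nabla R = 2\operatorname{div}\Ric$, the difference collapses to an integrand of the schematic form $4\Ric(W, W) + 2u^2|\Ric|^2 - 4\langle \nabla^2\phi,\, u^2 \Ric + \nabla u \otimes \nabla u\rangle$, where $W := \nabla u - u\nabla\phi$.

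The main obstacle is controlling the Hessian coupling $\langle \nabla^2\phi,\, u^2 \Ric + \nabla u \otimes \nabla u\rangle$ from above. This is precisely where Theorem \ref{thm matrix Harnack heat equation}, applied to $G$ as a positive solution of the heat equation under Ricci flow with nonnegative sectional curvature and $\Ric \leq \kappa g$, enters: it gives $\nabla^2\phi = -\nabla^2 \log G \leq \frac{\kappa}{1-e^{-2\kappa t}} g$. Since $u^2\Ric + \nabla u \otimes \nabla u$ is positive semidefinite (using $\Ric \geq 0$ from nonnegative sectional curvature), this yields the pointwise estimate $\langle \nabla^2\phi,\, u^2\Ric + \nabla u \otimes \nabla u\rangle \leq \frac{\kappa}{1-e^{-2\kappa t}}(u^2 R + |\nabla u|^2)$. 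Combined with the nonnegativity of $\Ric(W, W)$ and $u^2|\Ric|^2$, this should yield the desired differential inequality once the correction factor is chosen so that $\phi'(t)/\phi(t) = (n-2)\kappa + 2\kappa e^{-2\kappa t}/(1-e^{-2\kappa t})$ compensates the residual deficit via the regrouping $J = 2D + S$. The delicate combinatorial accounting of this compensation in all dimensions is the main technical step; a subsidiary concern is justifying the integrations by parts in the complete (possibly noncompact) setting, which requires standard growth bounds on $u$ and $G$ under Ricci flow with bounded nonnegative sectional curvature.
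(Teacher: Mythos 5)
Your structural work is correct, and part of it is genuinely different from the paper: after the same reduction ($I'=2D+S$, the formula for $I''$, and Cauchy--Schwarz in $L^2(G\,d\mu)$ with $\Delta_f$), you treat the curvature terms $2\int u^2R\,\Delta G\,dg+4\int Ru\langle\nabla u,\nabla G\rangle\,dg$ by integrating by parts and invoking the contracted Bianchi identity, which indeed collapses the difference $I''-\int(2\Delta_f u-Ru)^2G\,dg$ to $\int\bigl[4\Ric(W,W)+2u^2|\Ric|^2-4\langle\nabla^2 f,\,u^2\Ric+\nabla u\otimes\nabla u\rangle\bigr]G\,dg$ with $W=\nabla u-u\nabla f$; I checked this identity and it is right. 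The paper instead keeps those terms as they are, cancels the two $|\nabla G|^2/G$ integrals against each other, and controls what remains with the traced estimate of Corollary \ref{cor trace} for $\Delta G$ together with $0\le R\le n\kappa$ (this is where the dimension enters its correction factor); your route needs only the matrix estimate of Theorem \ref{thm matrix Harnack heat equation} and is arguably cleaner at this stage.

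The gap is exactly the step you defer as ``delicate combinatorial accounting'', and it does not close with the stated factor. Your pointwise bound gives $\langle\nabla^2 f,\,u^2\Ric+\nabla u\otimes\nabla u\rangle\le\frac{\kappa}{1-e^{-2\kappa t}}(u^2R+|\nabla u|^2)$, hence, after discarding the nonnegative terms, $I''-\int(2\Delta_f u-Ru)^2G\,dg\ge-\frac{4\kappa}{1-e^{-2\kappa t}}\bigl(D(t)+S(t)\bigr)$ and therefore $F'\ge-\frac{4\kappa}{1-e^{-2\kappa t}}\,\frac{D+S}{2D+S}\,F$. The regrouping $J=2D+S$ halves the coefficient only in the $D$-direction; when $S$ is not negligible compared with $D$ the required logarithmic derivative of the correction factor is of order $\frac{2}{t}$ as $t\to0^+$, whereas the claimed factor $e^{(n-2)\kappa t}(1-e^{-2\kappa t})$ has logarithmic derivative $(n-2)\kappa+\frac{2\kappa e^{-2\kappa t}}{1-e^{-2\kappa t}}\sim\frac{1}{t}$. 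So no choice of bookkeeping within your estimates produces the stated quantity; what your argument actually proves is monotonicity of $e^{4\kappa t}\bigl(1-e^{-2\kappa t}\bigr)^2F(t)$, a factor asymptotic to $t^2$ rather than $t$. This is not merely cosmetic: the application to unique continuation (Corollary \ref{Corollary unique continuation}) needs $F(t)\lesssim 1/t$ near $t=0$, which a $t^2$-type factor does not give. To recover a factor with the $\frac1t$ singularity you must handle the $S$-contribution by a separate mechanism (e.g.\ the paper's use of the traced Li--Yau bound for $\Delta G$ and the bound $R\le n\kappa$, or an a priori bound of $S/I$), not by the $2D+S$ regrouping alone; as written, the proposal does not establish the theorem as stated.
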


We point out that Theorem \ref{thm PF sec geq 0} covers, by letting $\kappa \to 0^+$, Poon's frequency monotonicity on $\R^n$ in \cite{Poon96}. It also implies a unique continuation result (see Lin \cite{Lin90}, Poon \cite{Poon96}, and Vessella \cite{Vessella03} for unique continuation results for parabolic equations).
\begin{corollary}\label{Corollary unique continuation}
Let $(M^n,g(t))$, $t \in [0,T]$, be a complete Ricci flow with nonnegative sectional curvature and $\Ric \leq \k g$ for some $\k>0$. Suppose that a solution $u(x,t)$ of the backward conjugate heat equation \eqref{backward conjugate heat equation} on $M^n \times [0,T]$ vanishes of infinity order at $(x_0, t_0) \in M\times (0,T)$, in the sense that 
\begin{equation*}
|u(x,t)| \leq O\left(d^2_t(x,x_0) +|t-t_0|\right)^N 
\end{equation*}
for all positive integer $N$ and all $(x,t)$ near $(x_0,t_0)$. 
Then $u\equiv 0$ in $M\times [0,T]$. 
\end{corollary}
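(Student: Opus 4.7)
The plan is to combine the frequency monotonicity from Theorem \ref{thm PF sec geq 0} with a classical Poon-style argument. After a time translation, set $\tilde g(s) := g(t_0 + s)$ and $\tilde u(x, s) := u(x, t_0 + s)$ for $s \in [0, T - t_0]$; then $\tilde u$ solves the backward conjugate heat equation for $\tilde g$, the curvature hypotheses are preserved, and the vanishing hypothesis becomes $|\tilde u(x, s)| \leq C_N(d_{\tilde g(s)}(x, x_0)^2 + s)^N$ near $(x_0, 0)$ for every $N$. Take $G = G(x, x_0, s)$ to be the heat kernel of $\tilde g$ with pole at $(x_0, 0)$ and form $I(s)$, $D(s)$, $S(s)$, and $F(s)$ as in the theorem. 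We may assume $I(s) > 0$ for all small $s$, since otherwise we pass directly to the propagation step below.

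The first step is to derive the super-polynomial upper bound
\begin{equation*}
I(s) \leq C_N s^{2N}, \quad N \geq 1, \ s \in (0, s_0].
\end{equation*}
Splitting the integral into $\{d_{\tilde g(s)}(x, x_0) < r_0\}$ and its complement, I use $|\tilde u|^2 \leq C_N(d^{4N} + s^{2N})$ on the inner region together with the Gaussian upper bound $G \leq C s^{-n/2} e^{-d^2/(Cs)}$, which is available under the Ricci flow with $\Ric \geq 0$. Bishop--Gromov volume comparison yields $\int d^{4N} G\, d\mu \leq C s^{2N}$ and $\int G\, d\mu \leq C$, handling the inner region. The outer contribution is bounded by $\|\tilde u\|_\infty^2 \int_{d \geq r_0} G\, d\mu \leq C\, e^{-r_0^2/(Cs)}$, negligible compared to any power of $s$.

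The second step combines this with the monotonicity of $\Phi(s) := e^{(n-2)\kappa s}(1 - e^{-2\kappa s}) F(s)$. For fixed $s_1 \in (0, s_0]$ we have $\Phi(s) \leq \Phi(s_1)$ on $(0, s_1]$, and since $e^{(n-2)\kappa s}(1 - e^{-2\kappa s}) \geq \kappa s$ there, this gives $F(s) \leq A/s$ for some $A = A(s_1, \kappa)$; if $\Phi(s_1) \leq 0$ then $F \leq 0$ on this interval and an even easier argument applies. Since $F = (\log I)'$, integrating from $s$ to $s_1$ yields $I(s_1) \leq I(s)(s_1/s)^A$. Choosing $N > A/2$ in the first step produces $I(s_1) \leq C_N s_1^A s^{2N - A} \to 0$ as $s \to 0^+$, so $I(s_1) = 0$. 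Since $G > 0$, this forces $\tilde u(\cdot, s_1) \equiv 0$ for every $s_1 \in (0, s_0]$.

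To conclude, the backward conjugate heat equation is well posed backward in time (it is a forward parabolic Cauchy problem in reversed time), so $\tilde u(\cdot, s_1) \equiv 0$ gives $u \equiv 0$ on $[0, t_0 + s_1]$. Propagating the vanishing to $(t_0 + s_1, T]$ uses backward uniqueness for parabolic equations, a standard result; alternatively one iterates the frequency argument at a later base time. The main obstacle is Step 1: establishing the Gaussian upper bound for the Ricci flow heat kernel in the complete, possibly noncompact setting and matching it cleanly with the infinite-order vanishing. A secondary technical point is the forward-in-time propagation, which is handled by standard backward uniqueness theorems for parabolic equations.
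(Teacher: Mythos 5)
Your proposal is correct and follows essentially the same route as the paper: combine the monotonicity of $e^{(n-2)\kappa s}(1-e^{-2\kappa s})F(s)$ from Theorem \ref{thm PF sec geq 0} (giving $F(s)\le A/s$ and hence $I(s_1)\le I(s)(s_1/s)^A$) with the Gaussian heat kernel bound and the infinite-order vanishing (giving $I(s)\le C_N s^{2N}$), forcing $I(s_1)=0$. The only differences are presentational: you make the time shift placing the kernel's pole at $(x_0,t_0)$ explicit and you spell out the propagation of the vanishing to all of $[0,T]$ via forward and backward uniqueness, steps the paper treats as standard and leaves implicit.
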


Under general compact Ricci flows, we prove that $F(t)$ is monotone up to an implicit correction factor. 
\begin{theorem}\label{thm PF general}
Let $(M^n,g(t))$, $t \in [0,T]$, be a compact solution to the Ricci flow with sectional curvatures bounded by $K$ for some $K>0$. Let $u: M^n \times [0,T] \to \R$ be a solution to the backward conjugate heat equation \eqref{backward conjugate heat equation}. 
Then, for any $T>0$, there is a power $p=p(T, n, K, v_0, \diam)>0$ such that
\begin{equation}
t^p (F(t) +Z_0),
\end{equation}
where $F(t)$ is defined in \eqref{eq def F(t)}, is monotone nondecreasing on $[0,T]$. Here $Z_0=Z_0(T, n, K, v_0, \diam)$ is any sufficiently large number.
\end{theorem}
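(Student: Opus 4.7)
The plan is to differentiate the Poon-type frequency $F(t)=I'(t)/I(t)=(2D(t)+S(t))/I(t)$, extract a Cauchy--Schwarz ``concavity'' lower bound on $I(t)I''(t)$, and absorb the unavoidable first-order correction using the matrix Li--Yau--Hamilton estimate from Theorem~\ref{thm matrix Harnack heat equation general case}. The identity $I'(t)=2D(t)+S(t)$ is exactly \eqref{eq def F(t)}, which follows from $u_t+\Delta u=Ru$, $G_t=\Delta G$ and $\partial_t d\mu_{g(t)}=-R\,d\mu_{g(t)}$ after one integration by parts in which the cross terms $\int u\,\n u\cdot\n G$ cancel. Differentiating $I'$ once more, the time-derivative of the metric contributes $2R^{ij}\n_iu\n_ju$ in $D'$, the commutators $[\partial_t,\n]$ and $[\n,\Delta]$ applied to $u_t$ bring in additional Ricci terms, Shi-type bounds control the $\n R$, $\Delta R$, $|\Ric|^2$ contributions produced in $S'$, and integrating $\int\Delta\n u\cdot\n u\,G$ by parts twice produces $|\n^2 u|^2 G$ together with the symmetric pairing $2\,\n_iu\,\n_ju\,\n^i\n^j\log G\cdot G$. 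Collecting these yields a schematic identity
\[
I''(t)=2\int|\n^2 u|^2 G\,d\mu_{g(t)} - 2\int \n_iu\,\n_ju\bigl(\n^i\n^j\log G-R^{ij}\bigr)G\,d\mu_{g(t)}+\mathcal{E}(t),
\]
with $|\mathcal{E}(t)|\leq C(T,n,K,v_0,\diam)\bigl(D(t)+I(t)+S(t)\bigr)$.

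Next, a weighted Cauchy--Schwarz inequality matching $I(t)\int|\n^2 u|^2 G\,d\mu_{g(t)}$ against an inner product that reproduces a piece of $I'(t)^2$ gives, after dividing by $I(t)^2$, a differential inequality
\[
F'(t)\geq -\frac{2}{I(t)}\int u^2\bigl(\n^2\log G-\Ric\bigr)(V,V)\,G\,d\mu_{g(t)}-C\bigl(F(t)+1\bigr),
\]
where $V$ is the Cauchy--Schwarz optimizer (essentially $\n u/u$). Applying Theorem~\ref{thm matrix Harnack heat equation general case} to the positive solution $G$ of \eqref{heat equation} furnishes
\[
\n^i\n^j\log G\geq-\Bigl(\tfrac{1}{2t}+\tfrac{\beta(t,n,K)}{t}\Bigr)g^{ij},
\]
so that $\n^2\log G-\Ric\geq-(\tfrac{1}{2t}+\tfrac{\beta}{t}+K)g$. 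Substituting and absorbing the remaining bounded terms produces a differential inequality
\[
F'(t)\geq-\frac{p_0(t)}{t}\bigl(F(t)+Z_0\bigr),
\]
where $p_0=p_0(T,n,K,v_0,\diam)$ is bounded on $[0,T]$ and $Z_0$ is any constant so large that $F(t)+Z_0\geq 0$ throughout. A uniform a priori upper bound on $|F(t)|$ (needed to fix $Z_0$) comes from heat kernel bounds under Ricci flow together with the gradient estimate behind Corollary~\ref{cor trace}, and in particular does not invoke the very monotonicity being proved. Choosing $p\geq\sup_{[0,T]}p_0(t)$ then yields $\tfrac{d}{dt}\bigl[t^p(F(t)+Z_0)\bigr]\geq 0$, which is the theorem.

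The main obstacle is the bookkeeping in the second derivative of $I$ and in closing the Cauchy--Schwarz step: every term produced by the time-dependent metric and by the commutators $[\partial_t,\n]$ and $[\n,\Delta]$ must be either absorbed into $|\n^2 u|^2 G$ or bounded by $(D+I+S)$ with a \emph{uniformly} bounded multiplier, since any multiplier blowing up like $1/t^{1+\epsilon}$ would spoil the integrating-factor argument. The $\sqrt K\,\diam/t$ contribution in Theorem~\ref{thm matrix Harnack heat equation general case} is of the critical order $1/t$, and this is precisely what forces the exponent $p$ to depend on $\diam$ and, through the non-collapsing hypothesis, on $v_0$; one cannot hope to replace $t^p$ by a uniform multiplier by this method.
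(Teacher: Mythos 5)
Your overall strategy coincides with the paper's: compute $I''$, compare with $(I')^2=(2D+S)^2$ via Cauchy--Schwarz, control the Hessian-of-$\log G$ term by the matrix Harnack estimate of Theorem \ref{thm matrix Harnack heat equation general case}, and integrate a differential inequality of the form $tF'\ge -pF-Z_0$. However, the key term in your schematic identity carries the wrong sign. With the forward heat kernel $G$ as weight and $u$ solving the backward conjugate heat equation, the computation (the paper's Lemma \ref{lemI''t}, with $f=-\log G$) produces the combination $\Ric+\n^2\log G$: explicitly, $I''=4\int_M(\Ric+\n^2\log G)(\n u,\n u)G\,d\mu_{g(t)}+\int_M(2\Delta_f u-Ru)^2G\,d\mu_{g(t)}+2\int_M u^2|\Ric|^2G\,d\mu_{g(t)}+2\int_M u^2R\,\Delta G\,d\mu_{g(t)}+4\int_M Ru\langle\n u,\n G\rangle\,d\mu_{g(t)}$, so the \emph{lower} bound $\n^2\log G\ge -\bigl(\tfrac{1}{2t}+\tfrac{\beta}{t}\bigr)g$ from Theorem \ref{thm matrix Harnack heat equation general case}, together with $\Ric\ge -c(n)Kg$, is exactly the right tool. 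You instead wrote the term as $-2\int\n_iu\,\n_ju\,(\n^i\n^j\log G-R^{ij})G$, i.e.\ $+2\int(\Ric-\n^2\log G)(\n u,\n u)G$, and then substituted a lower bound for $\n^2\log G-\Ric$; with your sign this bounds the term from above, not below, so the asserted inequality $F'\ge-\tfrac{p_0(t)}{t}(F+Z_0)$ does not follow as written. (Your sign is the Baldauf--Kim convention, where the weight is the conjugate heat kernel and one needs an upper bound on $\Ric-\n^2\log K$; here the sign is opposite, which is precisely why Theorem \ref{thm matrix Harnack heat equation general case} applies.)

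The second, more substantial gap is the error term $\mathcal{E}$. The delicate contributions are $2\int_M u^2R\,\Delta G\,d\mu_{g(t)}+4\int_M Ru\langle\n u,\n G\rangle\,d\mu_{g(t)}$; since $\Delta G$ and $|\n G|$ are not pointwise comparable to $G$ (near $t=0$ they behave like $d^2t^{-2}G$ and $dt^{-1}G$ in the far field), the claim $|\mathcal{E}|\le C(T,n,K,v_0,\diam)(D+I+S)$ with a time-uniform constant is unjustified, and for $\int u^2R\,\Delta G$ with sign-changing $R$ it is not true. The paper handles exactly these terms by writing $R=(R-s_0)+s_0$ with $s_0=\inf_MR(\cdot,0)$, using an exact cancellation of the $s_0$-part against the cross term, and then applying the one-sided Li--Yau bound $\Delta G\ge\tfrac{|\n G|^2}{2G}-\bigl(\tfrac{c(n)}{t}+K\bigr)G$ plus Cauchy--Schwarz; the resulting coefficients $c_1(t),c_2(t)$ are of order $1/t$, which is why the correction is $t^p$ rather than a bounded factor. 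Alternatively one may integrate by parts to move derivatives onto the curvature (the two terms combine to $2\int u^2\,\Delta R\,G+4\int u\langle\n u,\n R\rangle G$) and invoke Shi-type estimates, again at the cost of $1/t$; if that is what your ``Shi-type bounds'' remark intends, the cancellation must be stated and the uniform constant replaced by a $1/t$ multiplier. Finally, the a priori bound on $|F(t)|$ you invoke is both unavailable (no uniform upper bound on $F$ holds as $t\to0$) and unnecessary: writing the inequality as $tF'\ge-pF-Z_0$ yields $\tfrac{d}{dt}\bigl(t^pF+\tfrac{Z_0}{p}t^p\bigr)\ge0$ with no sign condition on $F$, and in any case $F\ge-c(n)K$ holds trivially because $D\ge0$ and $R\ge-c(n)K$.
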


Extra curvature terms arise due to the Ricci flow when proving Theorem \ref{thm PF general}. We handle them using some cancellation property and some Li-Yau estimates for the heat kernel under the Ricci flow, together with the matrix Harnack inequality in Theorem \ref{thm matrix Harnack heat equation general case}.
Besides the above-mentioned results, we also prove the monotonicity of a parabolic frequency without weight at the end of Section 5 assuming nonnegative Ricci curvature; see Theorem \ref{thm PF no weight}.

Finally, we apply Theorem \ref{thm matrix Harnack backward conjugate heat equation} to prove that
\begin{proposition}\label{prop PF conjugate}
Let $(M^n,g(t))$, $t\in [0,T]$, be a solution to the Ricci flow with nonnegative complex sectional curvature and $\Ric \leq \k g$ for some $\k>0$. Let $u:M^n \times [0,T] \to \R$ be a solution to the heat equation \eqref{heat equation} and let $w:M^n \times [0,T] \to (0,\infty)$ be a positive solution to the backward conjugate heat equation \eqref{backward conjugate heat equation}. Then the quantity
\begin{equation}\label{eq PF KC geq 0}
(e^{2\k(T-t)} -1) e^{-\sqrt{8\k t}} \cdot \frac{\int_M |\n u(x,t)|^2 w(x,t) d\mu_{g(t)}}{\int_M u^2(x,t) w(x,t) d\mu_{g(t)}}
\end{equation}
is monotone nonincreasing on $[0,T]$. 
\end{proposition}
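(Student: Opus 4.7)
The plan is to introduce the weighted quantities
\[
\mathcal{I}(t):=\int_M u^2 w\, d\mu_{g(t)},\qquad \mathcal{D}(t):=\int_M |\n u|^2 w\, d\mu_{g(t)},
\]
set $F(t):=\mathcal{D}(t)/\mathcal{I}(t)$, and derive the differential inequality $F'(t)\le 2\eta(t)F(t)$ with exactly the $\eta$ from Corollary \ref{corollary matrix Harnack conjugate}; integrating this inequality and computing the primitive of $\eta$ will reproduce the factor $(e^{2\kappa(T-t)}-1)\,e^{-\sqrt{8\kappa t}}$ up to a positive multiplicative constant.

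First I would differentiate $\mathcal{I}$ and $\mathcal{D}$, using $u_t=\Delta u$, $w_t+\Delta w=Rw$, and $\p_t d\mu_{g(t)}=-R\, d\mu_{g(t)}$. The scalar curvature terms cancel cleanly: integration by parts gives $\mathcal{I}'(t)=-2\mathcal{D}(t)$ exactly as in Poon's static computation. Combining $\p_t g^{ij}=2R^{ij}$ with the usual Bochner formula to eliminate the Ricci pieces yields $\p_t|\n u|^2=\Delta|\n u|^2-2|\n^2 u|^2$, and then a parallel calculation produces $\mathcal{D}'(t)=-2\int_M |\n^2 u|^2\, w\, d\mu_{g(t)}$. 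Consequently
\[
F'(t)=2F(t)^2-\frac{2}{\mathcal{I}(t)}\int_M |\n^2 u|^2\, w\, d\mu_{g(t)}.
\]

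The key step is to estimate $F^2$ via the Bakry--\'Emery weighted Bochner identity. Let $d\nu:=w\, d\mu_{g(t)}$ and $Lu:=\Delta u+\n\log w\cdot\n u$, so that $\nabla\!\cdot(w\n u)=w\cdot Lu$ and $L$ is self-adjoint on $L^2(d\nu)$. One integration by parts gives $\mathcal{D}=-\int u\, Lu\, d\nu$, and Cauchy--Schwarz then yields $\mathcal{D}^2\le \mathcal{I}\int (Lu)^2 d\nu$. On the other hand, the Bakry--\'Emery Bochner identity
\[
\tfrac{1}{2}L|\n u|^2 = |\n^2 u|^2 + \n u\cdot\n(Lu) + (\Ric-\n^2\log w)(\n u,\n u),
\]
integrated against $d\nu$, collapses to
\[
\int (Lu)^2 d\nu = \int |\n^2 u|^2\, d\nu + \int (\Ric-\n^2\log w)(\n u,\n u)\, d\nu,
\]
since $\int L\phi\, d\nu=0$ and $\int\n u\cdot\n(Lu)\, d\nu=-\int(Lu)^2 d\nu$. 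Corollary \ref{corollary matrix Harnack conjugate} supplies precisely $\Ric-\n^2\log w \le \eta\, g$ with $\eta(t)=\frac{\kappa}{1-e^{-2\kappa(T-t)}}+\sqrt{\kappa/(2t)}$, so the last integral is bounded above by $\eta\,\mathcal{D}$. Combining the three displays yields $\mathcal{D}^2/\mathcal{I}\le\int |\n^2 u|^2\, w\, d\mu+\eta\,\mathcal{D}$, which upon dividing by $\mathcal{I}$ and doubling is exactly $F'\le 2\eta F$.

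Finally, $F(t)\exp(-2\int_0^t\eta(s)\, ds)$ is nonincreasing, and an elementary computation (the $\sqrt{\kappa/(2s)}$ piece integrates to $\tfrac{1}{2}\sqrt{8\kappa t}$, and the substitution $r=T-s$ turns the other piece into $\tfrac{1}{2}\log[(e^{2\kappa T}-1)/(e^{2\kappa(T-t)}-1)]$) shows the weight equals $(e^{2\kappa T}-1)^{-1}(e^{2\kappa(T-t)}-1)\, e^{-\sqrt{8\kappa t}}$, a positive constant times the factor in \eqref{eq PF KC geq 0}, proving the claim. The main difficulties I anticipate are bookkeeping the sign convention so that the upper bound on $\Ric-\n^2\log w$ from the matrix LYH estimate lines up with the quantity produced by the weighted Bochner formula, and justifying the several integrations by parts (in particular the vanishing $\int L\phi\, d\nu=0$) under the hypotheses of the proposition, which presumably requires either compactness of $M$ or the Gaussian-type decay of $w$ associated with a backward conjugate heat kernel.
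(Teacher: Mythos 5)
Your proposal is correct and, at its core, uses the same two ingredients as the paper: the explicit bound $\Ric-\n^2\log w\leq \bigl(\tfrac{\k}{1-e^{-2\k(T-t)}}+\sqrt{\k/(2t)}\bigr)g$ from Corollary \ref{corollary matrix Harnack conjugate}, followed by integrating the resulting differential inequality for the weighted frequency, and your evaluation of the integrating factor agrees with the paper's. The only difference is that the paper treats the frequency monotonicity as a black box, quoting Baldauf--Kim \cite{BK22}: given any $k(t)$ with $\Ric-\n^2\log K\leq \tfrac{k(t)}{2(T-t)}$, the quantity $e^{\int\frac{1-k(t)}{T-t}dt}I_{BK}(t)$ is nonincreasing, and the proof of Proposition \ref{prop PF conjugate} is just the computation of $e^{\int\frac{1-k(t)}{T-t}dt}$ for the explicit $k(t)$. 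You instead rederive that monotonicity from scratch: $\mathcal{I}'=-2\mathcal{D}$, $\mathcal{D}'=-2\int|\n^2u|^2w\,d\mu$, and the Bakry--\'Emery Bochner identity plus Cauchy--Schwarz giving $\mathcal{D}^2/\mathcal{I}\leq\int|\n^2u|^2w\,d\mu+\eta\,\mathcal{D}$, hence $F'\leq 2\eta F$. This is essentially a reproof of the cited lemma rather than a new route, but it has two merits: it makes the argument self-contained, and it makes explicit that the weight may be any positive solution $w$ of the backward conjugate heat equation (as the proposition states), not only the conjugate heat kernel for which \cite{BK22} is formulated. Your closing caveats (sign bookkeeping, and justifying $\int L\phi\,d\nu=0$ and the other integrations by parts in the noncompact case) are handled in the paper only through its blanket standing assumption on growth/integrability, so nothing further is needed there.
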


As mentioned before, Baldauf and Kim \cite{BK22} proved the monotonicity of $e^{\int \frac{1-k(t)}{T-t} dt}I_{BK}(t)$ under Ricci flow, where $k(t)$ is a time-dependent function such that
$$\Ric-\n^2 \log K \leq \frac{k(t)}{2(T-t)}.$$
However, it is not clear whether such $k(t)$ exists in the complete noncompact case. In the compact case, the existence of $k(t)$ is shown by Huang \cite{Huang21} and it depends on $|Rm|$, $|\n Rm|$ and $|\n^2 R|$, but no explicit $k(t)$ is known. 
Theorem \ref{thm matrix Harnack backward conjugate heat equation} and Corollary \ref{corollary matrix Harnack conjugate} provide an explicit $k(t)$ in the nonnegative complex sectional curvature case. Proposition \ref{prop PF conjugate} then gives an explicit correction factor in the monotonicity of $I_{BK}(t)$ and it is also applicable to complete noncompact Ricci flows with bounded nonnegative complex sectional curvature. In addition, we also get a unique continuation result in this case (see Corollary \ref{corollary UC KC geq 0}).

\textbf{Note:} Throughout the paper, we assume either $M$ is compact or $M$ is complete with bounded curvature/geometry, and the functions satisfy certain growth conditions so that the integrals are finite and all integration by parts can be justified.

The rest of this article is organized as follows. In Section 2, we derive the evolution equation satisfied by the Hessian of $\log u$, where $u$ is a positive solution to heat-type equations. 
Section 3 deals with the nonnegative sectional case and proves Theorem \ref{thm matrix Harnack heat equation}. 
Section 4 gives the proof of Theorem \ref{thm matrix Harnack heat equation general case}.
Section 5 is devoted to studying the parabolic frequency and proving Theorem \ref{thm PF sec geq 0} and Theorem \ref{thm PF general}. 
In Section 6, we prove Theorem \ref{thm matrix Harnack backward conjugate heat equation} and Proposition \ref{prop PF conjugate}. 
In Section 7, we prove Theorem \ref{thm improve Hamilton}. 
\section{Evolution Equations}

Let $(M^n,g(t))$, $t\in [0,T]$, be a solution to the Ricci flow. 
Let $u:M^n\times [0,T] \to \R$ be a positive solution to the heat-type equation
\begin{equation}\label{general heat equation}
    (\p_t  -\varepsilon \Delta_{g(t)}) u=\delta Ru,
\end{equation}
where $\varepsilon$ and $\delta$ are real parameters. We are mainly interested in the heat equation corresponding to $\ve=1$ and $\delta =0$ and the backward conjugate heat equation corresponding to $\ve=-1$ and $\delta =1$, but the calculations in this section are valid for all $\ve, \delta \in \R$.

The main result of this section is the evolution equation satisfied by 
\begin{equation*}
    H_{ij} :=  \n_i \n_j \log u.
\end{equation*}

\begin{proposition}\label{prop evolution H_ij}
In the setting described above, we have
\begin{eqnarray}\label{eq evolution H_ij} 
&& (\p_t-\ve \Delta)H_{ij} \\ \nonumber
&=&  \delta \n_i\n_j R + 2 \ve \left( H^2_{ij}  +R_{ikjl}\n_k v \n_l v + \n_k H_{ij} \n_k v \right)
\\ \nonumber 
&& +\ve (2R_{ikjl}H_{kl}-R_{ik}H_{jk}-R_{jk}H_{ik}) \\
&& +(1-\ve) (\n_iR_{jk}+\n_jR_{ik}-\n_kR_{ij})\n_k v, \nonumber
\end{eqnarray}
where $H^2_{ij}:=H_{ik}H_{jk}$. 
\end{proposition}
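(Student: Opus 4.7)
The plan is to set $v := \log u$, which transforms equation \eqref{general heat equation} into the scalar equation
\begin{equation*}
v_t = \varepsilon(\Delta v + |\nabla v|^2) + \delta R,
\end{equation*}
so that $H_{ij} = \nabla_i\nabla_j v$. I would compute $\partial_t H_{ij}$ and $\varepsilon \Delta H_{ij}$ separately and then subtract.

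For $\partial_t H_{ij}$, since the connection is time-dependent, I start from
\begin{equation*}
\partial_t (\nabla_i \nabla_j v) = \nabla_i \nabla_j (v_t) - (\partial_t \Gamma^k_{ij})\, \nabla_k v,
\end{equation*}
and apply the Ricci-flow formula $\partial_t \Gamma^k_{ij} = -\nabla_i R_j{}^k - \nabla_j R_i{}^k + \nabla^k R_{ij}$ to extract the $(\nabla_i R_{jk} + \nabla_j R_{ik} - \nabla_k R_{ij})\nabla^k v$ piece. Expanding $\nabla_i\nabla_j v_t$ produces $\varepsilon\nabla_i\nabla_j \Delta v$, $\delta\nabla_i\nabla_j R$, and $\varepsilon\nabla_i\nabla_j|\nabla v|^2$. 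For the last of these I use
\begin{equation*}
\nabla_i\nabla_j|\nabla v|^2 = 2 H_{ik} H_j{}^k + 2\, \nabla^k v\cdot \nabla_i\nabla_j\nabla_k v,
\end{equation*}
together with the Ricci-identity consequence
\begin{equation*}
\nabla_i\nabla_j\nabla_k v = \nabla_k H_{ij} + R_{ikjl}\,\nabla^l v,
\end{equation*}
obtained from the symmetry of the Hessian and a single commutator of covariant derivatives on the one-form $\nabla v$. This generates the $2\varepsilon\bigl(H^2_{ij} + R_{ikjl}\nabla^k v\,\nabla^l v + \nabla^k v\,\nabla_k H_{ij}\bigr)$ block appearing in \eqref{eq evolution H_ij}.

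For $\varepsilon\Delta H_{ij}$, I would invoke the Weitzenb\"ock-type identity for the Hessian of a scalar,
\begin{equation*}
\Delta H_{ij} = \nabla_i\nabla_j\Delta v - 2 R_{ikjl}H^{kl} + R_{ik}H_j{}^k + R_{jk}H_i{}^k + (\nabla_i R_{jk} + \nabla_j R_{ik} - \nabla_k R_{ij})\,\nabla^k v,
\end{equation*}
obtained from two applications of the Ricci commutator to $\nabla v$ combined with the contracted second Bianchi identity $\nabla^a R_{abcd} = \nabla_c R_{bd} - \nabla_d R_{bc}$, which is used to organize the $\nabla(\mathrm{Riem})\cdot\nabla v$ contributions into the stated form.

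Subtracting $\varepsilon$ times this Weitzenb\"ock identity from the expression for $\partial_t H_{ij}$, the $\varepsilon\nabla_i\nabla_j\Delta v$ terms cancel exactly; the two copies of the $\nabla R\cdot\nabla v$ group combine into $(1-\varepsilon)(\nabla_i R_{jk} + \nabla_j R_{ik} - \nabla_k R_{ij})\nabla^k v$, which vanishes precisely in the heat-equation case $\varepsilon = 1$; and the curvature-Hessian terms inherited from $-\varepsilon\Delta H_{ij}$ assemble into $\varepsilon(2R_{ikjl}H^{kl} - R_{ik}H_{jk} - R_{jk}H_{ik})$, completing the right-hand side of \eqref{eq evolution H_ij}. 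The main place to be careful is consistent sign-bookkeeping across the several applications of the curvature commutator and the pair symmetry of $R_{ijkl}$; no analytic or geometric input is required beyond tensor calculus on a time-dependent background.
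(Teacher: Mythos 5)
Your proposal is correct and follows essentially the same route as the paper: the same evolution formula for the Christoffel symbols under Ricci flow, the same commutation of covariant derivatives with the contracted Bianchi identity (the paper merely packages this as a commutator lemma for $\p_t-\ve\Delta_L$ with the Lichnerowicz Laplacian, which is equivalent to your Weitzenb\"ock identity for $\Delta H_{ij}$), and the same expansion $\n_i\n_j|\n v|^2=2H^2_{ij}+2R_{ikjl}\n_k v\,\n_l v+2\n_k H_{ij}\n_k v$. The bookkeeping you outline reproduces \eqref{eq evolution H_ij} exactly.
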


We first prove a commutator formula for $\p_t-\ve \Delta_L$ and $\n_i \n_j$, where $\Delta_L$ denotes the Lichnerowicz Laplacian acting on symmetric two-tensors via
\begin{equation*}
    \Delta_L h_{ij} = \Delta h_{ij}+2R_{ikjl}h_{kl}-R_{ik}h_{jk}-R_{jk}h_{ik}.
\end{equation*}

\begin{lemma}
Under the Ricci flow, it holds that 
\begin{eqnarray}\label{eq commuting Hessian and Delta_L}
&& (\p_t-\ve \Delta_L)(\n_i \n_j f)  \\ &=&  \n_i \n_j  (\p_t  -\ve \Delta) f    + (1-\ve)(\n_iR_{jk}+\n_jR_{ik}-\n_kR_{ij}) \n_k f \nonumber
\end{eqnarray}
for any smooth function $f(x,t)$. 
\end{lemma}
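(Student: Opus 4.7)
The plan is to split the commutator into its temporal and spatial pieces and show that the Lichnerowicz Laplacian is engineered so that the curvature terms on the spatial side are exactly what the time side produces under Ricci flow.

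First I would handle the temporal part. Under the Ricci flow $\partial_t g_{ij} = -2R_{ij}$, the variation of the Levi-Civita connection is the standard formula $\partial_t \Gamma^k_{ij} = -g^{kl}(\nabla_i R_{jl} + \nabla_j R_{il} - \nabla_l R_{ij})$. Plugging this into $\partial_t(\nabla_i \nabla_j f) = \partial_i\partial_j(\partial_t f) - (\partial_t \Gamma^k_{ij})\partial_k f - \Gamma^k_{ij}\partial_k(\partial_t f)$ immediately gives
\begin{equation*}
\partial_t(\nabla_i \nabla_j f) \;=\; \nabla_i\nabla_j(\partial_t f) + (\nabla_i R_{jk} + \nabla_j R_{ik} - \nabla_k R_{ij})\nabla^k f.
\end{equation*}

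Next I would handle the spatial part. Setting $h_{ij}=\nabla_i\nabla_j f$, I would apply the Ricci identity twice: once to rewrite $\nabla_k h_{ij} = \nabla_k\nabla_i\nabla_j f$ as $\nabla_i h_{kj} - R_{kij}{}^p \nabla_p f$, and once more upon differentiating, then trace with $g^{kl}$. The single-curvature contribution simplifies using the contracted second Bianchi identity $\nabla^k R_{kijp} = \nabla_j R_{ip} - \nabla_p R_{ij}$ (plus its symmetrization), which, after a short bookkeeping step involving Ricci identities on $\nabla_l\nabla_i h_{jk}$, yields the pointwise formula
\begin{equation*}
\Delta(\nabla_i\nabla_j f) = \nabla_i\nabla_j \Delta f - 2 R_{ikjl}h_{kl} + R_{ik}h_{kj} + R_{jk}h_{ki} + (\nabla_i R_{jk} + \nabla_j R_{ik} - \nabla_k R_{ij})\nabla^k f.
\end{equation*}
Rewriting in terms of $\Delta_L h_{ij} = \Delta h_{ij} + 2R_{ikjl}h_{kl} - R_{ik}h_{jk} - R_{jk}h_{ik}$, the quadratic curvature terms cancel exactly, leaving the clean identity
\begin{equation*}
\Delta_L(\nabla_i\nabla_j f) \;=\; \nabla_i\nabla_j \Delta f + (\nabla_i R_{jk} + \nabla_j R_{ik} - \nabla_k R_{ij})\nabla^k f.
\end{equation*}

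The final step is to form $(\partial_t - \varepsilon \Delta_L)(\nabla_i\nabla_j f)$ from the two highlighted identities; the first-derivative Ricci terms appear once from the time part and $-\varepsilon$ times from the spatial part, combining to the factor $(1-\varepsilon)$ in the claim, while the $\nabla_i\nabla_j \partial_t f$ and $-\varepsilon \nabla_i\nabla_j \Delta f$ terms assemble into $\nabla_i\nabla_j(\partial_t - \varepsilon \Delta)f$. The main obstacle is purely bookkeeping: one must carefully keep track of curvature sign conventions and invoke the second Bianchi identity in the right form to confirm that the two-tensor curvature terms produced by commuting $\Delta$ past $\nabla_i\nabla_j$ cancel, to the sign, against those built into $\Delta_L$. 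Once that cancellation is verified, the proposition is immediate.
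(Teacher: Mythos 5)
Your proposal is correct and follows essentially the same route as the paper: the time part via $\partial_t\Gamma^k_{ij}=-g^{kl}(\nabla_iR_{jl}+\nabla_jR_{il}-\nabla_lR_{ij})$, and the spatial part by commuting the Laplacian past the Hessian with the Ricci identities and the contracted second Bianchi identity, so that the quadratic curvature terms are absorbed exactly by $\Delta_L$. The only cosmetic difference is that you state the commutation for $\Delta$ first and then pass to $\Delta_L$, whereas the paper works with $\Delta_L$ directly; the combination step yielding the $(1-\ve)$ factor is identical.
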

\begin{proof}
The cases $\varepsilon=\pm 1$ are proved in \cite{CLN}, so we only do a slight generalization here. The time derivatives of the Christoffel symbols $\Gamma_{ij}^k$ under the Ricci flow are given by (see \cite[page 108]{CLN}) 
\begin{equation*}
    \p_t \Gamma_{ij}^k = -g^{kl}(\n_i R_{jl} +\n_j R_{il} -\n_l R_{ij}).
\end{equation*}
It follows that 
\begin{equation*}
\p_t(\n_i \n_j f) = \n_i \n_j (\p_t f) +(\n_i R_{jk} +\n_j R_{ik} -\n_l R_{ij})\n_k f.
\end{equation*}
Second, we have 
\begin{equation*}
\Delta_L \n_i \n_j f =\n_i \n_j \Delta f +(\n_iR_{il}+\n_jR_{il}-\n_lR_{ij})\n_l f.   
\end{equation*}
This can be seen by commuting covariant derivatives as follows
\begin{eqnarray*}
\n_i \n_j \Delta f &=& \n_i \n_j \n_k \n_k f \\
&=& \n_i \n_k \n_k \n_j f -\n_i(R_{jl}\n_l f ) \\
&=& \n_k \n_i \n_k \n_j f -R_{il}\n_l \n_j f +R_{ikjl}\n_k\n_l f \\
&& -\n_iR_{jl}\n_l f -R_{jl}\n_i\n_l f \\
&=& \n_k \n_k \n_i \n_j f +\n_k(R_{ikjl}\n_l f) -R_{il}\n_l \n_j f +R_{ikjl}\n_k\n_l f \\
&& -\n_iR_{jl}\n_l f -R_{jl}\n_i\n_l f \\
&=& \Delta \n_i\n_j f +2R_{ikjl}\n_k\n_l f +\n_l R_{ij} \n_l f -\n_j R_{il} \n_l f \\
&&  -R_{il}\n_l \n_j f -\n_iR_{jl}\n_l f -R_{jl}\n_i\n_l f \\
&=& \Delta_L \n_i \n_j f -(\n_iR_{jl}+\n_jR_{il}-\n_lR_{ij})\n_l f, 
\end{eqnarray*}
where we have used the contracted Bianchi identity
\begin{equation*}
\n_k R_{ikjl} = \n_l R_{ij} -\n_j R_{il}.
\end{equation*}
Combining the above two calculations, we obtain \eqref{eq commuting Hessian and Delta_L}.   
\end{proof}

We now prove Proposition \ref{prop evolution H_ij}. 
\begin{proof}[Proof of Proposition \ref{prop evolution H_ij}]
For convenience, we write $v=\log u$. One derives from \eqref{general heat equation} that $v$ satisfies the equation
$$(\p_t -\ve \Delta)v =\ve |\n v|^2+\delta R.$$ 
We compute that 
\begin{eqnarray*}
    \n_i\n_j |\n v|^2 &=& 2\n_i (\n_j\n_k v) \n_k v +2 \n_i \n_k v \n_j \n_k v \\
    &=& 2\n_k (\n_i\n_j v) \n_k v +2H^2_{ij}  +2R_{ikjl}\n_k v \n_l v \\
    &=& 2H^2_{ij}  +2R_{ikjl}\n_k v \n_l v +2 \n_k H_{ij} \n_k v,
\end{eqnarray*}
where $H^2_{ij}:=H_{ik}H_{jk}$. 
Applying the identity \eqref{eq commuting Hessian and Delta_L} to $f=v$ yields
\begin{eqnarray*}
&& (\p_t-\ve \Delta_L) H_{ij} \\
&=& \n_i\n_j (\ve |\n v|^2+\delta R) \\
&& + (1-\ve) (\n_iR_{jk}+\n_jR_{ik}-\n_kR_{ij})\n_k v \\
&=& \delta \n_i\n_j R + 2\ve \left(H^2_{ij} +R_{ikjl}\n_k v \n_l v +\n_k H_{ij} \n_k v \right)
\\
&&+(1-\ve) (\n_iR_{jk}+\n_jR_{ik}-\n_kR_{ij})\n_k v. 
\end{eqnarray*}
Finally, \eqref{eq evolution H_ij} follows from the above identity and
\begin{equation*}
\Delta_L H_{ij} = \Delta_L H_{ij} +2R_{ikjl}H_{kl}-R_{ik}H_{jk}-R_{jk}H_{ik}. 
\end{equation*}
The proof is complete. 
\end{proof}

\section{Matrix Harnack for the Heat Equation}
In this section, we prove Theorem \ref{thm matrix Harnack heat equation}. The proof of Theorem \ref{thm matrix Harnack heat equation} divides into two cases: the compact case and the complete noncompact case. 

\subsection{The compact case}
\begin{proof}[Proof of Theorem \ref{thm matrix Harnack heat equation}]
In the compact case, we use Hamilton's tensor maximum principle to prove Theorem \ref{thm matrix Harnack heat equation}.

Setting $\ve=1$ and $\delta =0$ in \eqref{eq evolution H_ij}, 
we get that $H_{ij}:=\n_i\n_j \log u$ satisfies
\begin{eqnarray}\label{eq evolution H_ij S3} 
(\p_t-\Delta)H_{ij} 
&=& 2H^2_{ij} +(2R_{ikjl}H_{kl}-R_{ik}H_{jk}-R_{jk}H_{ik}) \\ \nonumber 
&& +2R_{ikjl}\n_k v \n_l v +2\n_k H_{ij} \n_k v
\end{eqnarray}
where $H^2_{ij}:=H_{ik}H_{jk}$.
We write
\begin{equation}\label{eq c(t) def}
    c(t):=\frac{\kappa }{1-e^{-2\kappa t}}
\end{equation}
and define
\begin{equation*}
    Z_{ij}:=H_{ij}+c(t)g_{ij}.
\end{equation*}
Direct calculations using \eqref{eq evolution H_ij S3} 
and the identity
\begin{eqnarray*}
&& 2H^2_{ij} +2R_{ikjl}H_{kl}-R_{ik}H_{jk}-R_{jk}H_{ik} \\
&=& 2Z^2_{ij} -4cZ_{ij} +2c^2 g_{ij} +2R_{ikjl}Z_{kl}-R_{ik}Z_{jk}-R_{jk}Z_{ik}
\end{eqnarray*}
show that
\begin{eqnarray}\label{eq evolution Z_ij S3} 
(\p_t-\Delta)Z_{ij} 
&=& 2Z^2_{ij} -4cZ_{ij} +2R_{ikjl}Z_{kl}-R_{ik}Z_{jk}-R_{jk}Z_{ik}\\ \nonumber
&& +2R_{ikjl}\n_k v \n_l v +2\n_k Z_{ij} \n_k v \\ \nonumber
&& +(c'+2c^2-2\k c)g_{ij}+2 c (\kappa  g_{ij} -R_{ij}).
\end{eqnarray}
Noting that $2R_{ikjl}\n_k v \n_l v \geq 0$, $R_{ij} \leq \k g_{ij}$, and $c(t)$ solves the ODE
\begin{equation*}
  c'=-2c^2 +2\kappa c,
\end{equation*}
we derive from \eqref{eq evolution Z_ij S3} that
\begin{eqnarray}\label{eq evolution Z_{ij} sec geq 0}
(\p_t-\Delta)Z_{ij} 
&\geq & 2Z^2_{ij} -4cZ_{ij} +2R_{ikjl}Z_{kl} -R_{ik}Z_{jk}-R_{jk}Z_{ik} \\ \nonumber
&& + 2\n_k Z_{ij} \n_k v.
\end{eqnarray}
Since $M$ is compact and $c(t) \to \infty$ as $t\to 0^+$, we have $Z_{ij}\geq 0$ as $t\to 0^+$. Then the tensor maximum principle of Hamilton \cite{Hamilton86} implies that $Z_{ij}\geq 0$ for all $t\in [0,T]$, as it is clear that 
$$2Z^2_{ij} -4cZ_{ij} +2R_{ikjl}Z_{kl}-R_{ik}Z_{jk}-R_{jk}Z_{ik}$$
is nonnegative at a null-eigenvector of $Z_{ij}$. 
The proof is complete. 
\end{proof}

\subsection{The complete noncompact case}

Now we deal with the case that $(M^n, g(t))$, $t\in [0,T]$, is a complete noncompact Ricci flow with nonnegative sectional curvature and $\Ric \leq \k g$. 
We note that the uniqueness of solutions to the heat equation \eqref{heat equation} fails to be true on a complete noncompact manifold. In order to apply Hamilton's tensor maximum principle (see for instance \cite[Theorem 12.33]{Chowbookpart2} for a version on complete noncompact Ricci flows) to $Z_{ij}$, one needs to impose some growth condition on the function $u$ and its first and second derivatives. Using an idea in \cite{CN05} and \cite{Ni07}, we can, however, get away without assuming any growth conditions on $u$. The key is that we are working with a positive solution of the heat equation and we can make use of the Li-Yau estimate for $u$ under the Ricci flow (see \cite[Theorem 25.9 and Corollary 25.13]{Chowbookpart3}) to obtain required growth estimates at any positive time. Then, we can get integral bounds on the first and second derivatives of $u$ via integration by parts. Finally, we use the idea of working with the smallest eigenvalue of the symmetric two-tensor $tuZ_{ij}$ to use a maximum principle for scalar heat equation (see \cite[Theorem 12.22]{Chowbookpart2}), avoiding the tensor maximum principle which requires a more restrictive growth condition.

We first prove a growth estimate for $u$ on a slightly smaller time interval using the Li-Yau estimate and its resulting Harnack inequality. 

\begin{lemma}\label{lemma u growth}
Let $(M^n,g(t))$, $t\in [0,T]$, be a complete solution of the Ricci flow with $-\k g \leq \Ric \leq \k g$. Let $u$ be a positive solution to the heat equation \ref{heat equation} on $M\times [0,T]$. Fix $p\in M$. For any $\delta \in (0, T/3)$, there exist a positive constant $A_1$, depending on $n,\k, T, \delta$, and $u(p,T)$ such that 
\begin{equation}\label{eq growth u}
    u(x,t) \leq \exp{\left( A_1(d^2_0(x,p) +1)\right)}
\end{equation}
for all $x\in M$ and $t \in [\delta, T-\delta]$, where $d_0(\cdot,\cdot)$ is the distance function with respect to $g(0)$. 
\end{lemma}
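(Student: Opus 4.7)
The plan is to use the Li-Yau differential Harnack inequality for positive solutions of the heat equation on a Ricci flow with a two-sided Ricci bound, as recorded in \cite[Theorem 25.9 and Corollary 25.13]{Chowbookpart3}. Integrating that differential inequality along a suitable spacetime path connecting $(x,t)$ to the reference point $(p,T)$ yields a classical Harnack inequality of the form
\begin{equation*}
u(x,t) \le u(p,T)\left(\frac{T}{t}\right)^{A} \exp\!\left( \frac{A\,d_{t}^{2}(x,p)}{T-t} + A(T-t) \right)
\end{equation*}
for $0<t<T$, where $A=A(n,\k)$ depends only on the dimension and on the Ricci bound $\k$. The target Gaussian bound on $u$ is an upper bound, so this is exactly the direction of Harnack we need: we choose $(p,T)$ as the ``reference'' point whose value $u(p,T)$ is a finite number, and we estimate $u(x,t)$ for earlier times and other spatial points.

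The next step will be to convert the $d_{t}$-distance to a $d_{0}$-distance. Since the Ricci flow equation $\partial_{t}g=-2\Ric$ together with $|\Ric|\le\k g$ gives $|\partial_{t}g|\le2\k g$, we obtain the metric equivalence $e^{-2\k T}g(0)\le g(t)\le e^{2\k T}g(0)$, hence $d_{t}(x,p)\le e^{\k T}d_{0}(x,p)$ for every $t\in[0,T]$. Substituting this into the Harnack inequality replaces $d_{t}^{2}$ by $d_{0}^{2}$ at the cost of an $e^{2\k T}$ factor, which is a constant depending only on $\k$ and $T$. The restriction $t\in[\delta,T-\delta]$ with $\delta\in(0,T/3)$ is then used precisely to keep both $1/(T-t)$ and $(T/t)^{A}$ bounded by constants depending only on $n,\k,T,\delta$. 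Absorbing the multiplicative factor $u(p,T)$ and all $T,\delta,\k$-dependent constants into a single constant $A_{1}$ via $\log$ gives the stated estimate
\begin{equation*}
u(x,t)\le \exp\!\left( A_{1}\bigl(d_{0}^{2}(x,p)+1\bigr) \right).
\end{equation*}

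The main point I expect to require care is making sure that the cited Harnack inequality under Ricci flow indeed holds on a complete noncompact manifold without an a priori growth hypothesis on $u$: this is available because the Li-Yau differential estimates in \cite{Chowbookpart3} are pointwise and are proven via cutoff arguments that need only the two-sided Ricci bound $|\Ric|\le\k g$, which we have. Everything else in the argument is routine bookkeeping with the metric-equivalence inequality and with lower bounds $t\ge\delta$, $T-t\ge\delta$. Once this lemma is in place, it will serve in the subsequent application of the scalar maximum principle in the complete noncompact proof of Theorem \ref{thm matrix Harnack heat equation} by providing the pointwise growth control on $u$ needed to justify the integration by parts and to reduce the tensor maximum principle to a scalar one.
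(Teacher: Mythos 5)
Your proposal is correct and follows essentially the same route as the paper: both invoke the Li-Yau estimate and resulting Harnack inequality under Ricci flow from \cite[Theorem 25.9 and Corollary 25.13]{Chowbookpart3}, apply it between $(x,t)$ and the reference point $(p,T)$, and use the metric equivalence $e^{-2\k T}g(0)\le g(t)\le e^{2\k T}g(0)$ together with $t\ge\delta$, $T-t\ge\delta$ to absorb all factors into a single constant $A_1$. The only cosmetic difference is that the paper obtains the $d_0$-distance directly by choosing the background metric $\tilde g=g(0)$ in the cited Harnack inequality, whereas you convert $d_t$ to $d_0$ by hand afterward; this is immaterial.
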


\begin{proof}
Since $-\k g \leq \Ric \leq \k g$, we have 
$$e^{-2\k T} g(0) \leq g(t) \leq e^{2\k T} g(0)$$
for all $t\in [0,T]$. 
In the Li-Yau estimate stated in \cite[Theorem 25.9]{Chowbookpart3} and the Harnack inequality stated in \cite[Corollary 25.13]{Chowbookpart3}, we can take $\tilde{g}=g(0)$, $\tilde{C}_0=e^{2\k T}$, $Q=0$, and $\varepsilon=\tfrac{1}{3}$ and let $R\to \infty$. Then, we conclude that there exist positive constants $B_1=B_1(n,\k)$ and $B_2=B_2(\k, T)$, such that
\begin{equation}\label{eq classical Harnack}
    \frac{u(x_2,t_2)}{u(x_1,t_1)} \geq e^{-B_1(t_2-t_1)}\left(\frac{t_2}{t_1}\right)^{-n}\exp{\left(-B_2\frac{d^2_{0}(x_1,x_2)}{t_2-t_1}\right)}
\end{equation}
for any $x_1,x_2 \in M$ and $0<t_1 <t_2 \leq T$. Applying \eqref{eq classical Harnack} with $x=x_1$, $x_2=p$, $t_1=t$, and $t_2=T$, we get  
\begin{equation}
u(x,t) \leq u(p,T)e^{B_1(T-t)}\left(\frac{T}{t}\right)^{n}\exp{\left(B_2\frac{d^2_{0}(x,p)}{T-t}\right)}
\end{equation}
for all $(x,t)\in M\times (0,T)$. 
This implies that, for $(x,t) \in M \times [\delta, T-\delta]$, there exists a constant $A_1$ depending on $n,\k, T, \delta$, and $u(p,T)$ such that \eqref{eq growth u} holds. 
\end{proof}

Next, we obtain integral bounds for the first and second derivatives of $u$. 
\begin{lemma}
Let $(M^n,g(t))$ and $u$ be the same as in Lemma \ref{lemma u growth}. Assume further that $(M^n,g(t))$ has nonnegative scalar curvature. Then there exists $A_2 \geq A_1$ such that 
\begin{equation}\label{eq growth nabla u}
\int_\delta^{T-\delta} \int_M |\n u(x,t)|^2 \exp{\left(-A_2 (d^2_0(x,p)+1) \right)} dx dt < \infty
\end{equation}
and 
\begin{equation}\label{eq growth nabla square u}
\int_\delta^{T-\delta} \int_M  |\n^2 u(x,t)|^2 \exp{\left(-A_2 (d^2_0(x,p)+1) \right)} dx dt < \infty.
\end{equation}
\end{lemma}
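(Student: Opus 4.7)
The plan is to use a cutoff/absorption argument with the Gaussian weight $w(x):=e^{-A_2(d_0^2(x,p)+1)}$, where $A_2\geq A_1$ is chosen sufficiently large (say $A_2>2A_1+1$). Since $0\leq \Ric\leq\k g$, the metrics $g(t)$ are uniformly equivalent on $[0,T]$, Bishop--Gromov yields polynomial-in-$d_0$ volume growth, and a direct computation gives $|\n w|^2/w\leq CA_2^2 d_0^2 w$. The pointwise bound $u\leq e^{A_1(d_0^2+1)}$ from Lemma~\ref{lemma u growth} then makes $u^2 w\leq e^{(2A_1-A_2)(d_0^2+1)}$, and hence $u^2 w$ and $u^2 d_0^2 w$ are integrable on $M$. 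Introduce a spatial cutoff $\phi_R\in C_c^\infty(M)$ with $\phi_R\equiv 1$ on $B_{g(0)}(p,R)$, $\phi_R\equiv 0$ outside $B_{g(0)}(p,2R)$, and $|\n^{g(t)}\phi_R|\leq C/R$.

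For \eqref{eq growth nabla u}, I would multiply the heat equation by $u\phi_R^2 w$ and integrate over $M\times[\delta,T-\delta]$. Using $uu_t=\tfrac12(u^2)_t$ and $\p_t d\mu_{g(t)}=-R\,d\mu_{g(t)}$, the time-derivative side reduces to $\tfrac12\int u^2\phi_R^2 w\,d\mu\bigr|_\delta^{T-\delta}+\tfrac12\int\!\!\int u^2 R\phi_R^2 w\,d\mu\,dt$, both finite and independent of $R$ by the pointwise bound on $u$ and $0\leq R\leq n\k$. Integrating $\int u\Delta u\,\phi_R^2 w$ by parts in space yields $-\int|\n u|^2\phi_R^2 w-\int u\n u\cdot\n(\phi_R^2 w)$. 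An AM--GM split of the cross term absorbs half of $\int|\n u|^2\phi_R^2 w$ back into the left side, leaving a residual of the form $\int\!\!\int u^2\bigl(|\n\phi_R|^2+\phi_R^2|\n w|^2/w\bigr)\,d\mu\,dt$. The first residual vanishes as $R\to\infty$ because $\phi_R$ is supported where $d_0\geq R$ and $u^2 w$ decays like $e^{(2A_1-A_2)R^2}$; the second is uniformly bounded by the integrability of $u^2 d_0^2 w$. Monotone convergence then gives \eqref{eq growth nabla u}.

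For \eqref{eq growth nabla square u}, I would use the identity $(\p_t-\Delta)|\n u|^2=-2|\n^2 u|^2$, which is valid under the Ricci flow because the $2\Ric(\n u,\n u)$ contribution from $\p_t g^{ij}=2R^{ij}$ cancels the Ricci term in Bochner's formula. Multiplying by $\phi_R^2 w$ and integrating over $M\times[\delta,T-\delta]$ gives
\begin{equation*}
2\int_\delta^{T-\delta}\!\!\int_M|\n^2 u|^2\phi_R^2 w\,d\mu\,dt=\int_\delta^{T-\delta}\!\!\int_M\Delta|\n u|^2\,\phi_R^2 w\,d\mu\,dt-\int_\delta^{T-\delta}\!\!\int_M\p_t|\n u|^2\,\phi_R^2 w\,d\mu\,dt.
\end{equation*}
After integrating by parts twice in space on the first right-hand term and once in time on the second (using $\p_t d\mu_{g(t)}=-R\,d\mu_{g(t)}$), every residual integrand is controlled either by $|\n u|^2$ against bounded factors times $w$ (finite by \eqref{eq growth nabla u}) or by $u^2$ against factors polynomial in $d_0$ times $w$ (finite by the pointwise bound). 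Sending $R\to\infty$ yields \eqref{eq growth nabla square u}. The main obstacle is calibrating $A_2$ so that the Gaussian weight simultaneously (i) beats the quadratic-exponential growth of $u^2$, (ii) absorbs the lower-order terms generated by $|\n w|^2/w\sim A_2^2 d_0^2 w$, and (iii) kills the cutoff error on $B_{2R}\setminus B_R$ in the limit; any $A_2>2A_1+1$ works, thanks to the polynomial volume growth from $\Ric\geq 0$ and the uniform equivalence of the $g(t)$.
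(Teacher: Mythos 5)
Your overall strategy is the paper's: both proofs are parabolic Caccioppoli estimates built on $(\p_t-\Delta)u^2=-2|\n u|^2$ and $(\p_t-\Delta)|\n u|^2=-2|\n^2u|^2$, a cutoff, one integration by parts with Cauchy--Schwarz absorption, the sign of $R$ to handle $\p_t d\mu=-R\,d\mu$, and the pointwise bound \eqref{eq growth u}; folding the Gaussian weight into the test function rather than weighting a local estimate afterwards is only a cosmetic difference. Your first estimate is essentially fine (the appeal to $0\leq\Ric$ is a slip --- the lemma only assumes $-\k g\leq\Ric\leq\k g$ and $R\geq 0$ --- but harmless, since $\Ric\geq-\k g$ still gives at most exponential volume growth, which the Gaussian weight beats).

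The second estimate as you describe it has two concrete problems. First, ``integrating by parts twice in space'' produces $\Delta_{g(t)}(\phi_R^2 w)$, i.e.\ second derivatives of functions of $d_0$. These are not ``bounded factors times $w$'': the $g(t)$-Hessian of the time-zero distance is not controlled by two-sided Ricci bounds (comparing $\n^2_{g(t)}$ with $\n^2_{g(0)}$ would require derivative-of-curvature control along the flow), and $d_0$ is not smooth at the cut locus, where the distributional $\Delta d_0$ carries a nonpositive singular part, so the upper bound on $\Delta(\phi_R^2 w)$ that you need against $|\n u|^2\geq 0$ genuinely fails there. The remedy is the one you already used in the first step (and the paper uses for both steps): integrate by parts only once and absorb $2\,\n^2u(\n u,\n(\phi_R^2 w))$ by Cauchy--Schwarz, so that only $|\n\phi_R|$ and $|\n w|$ enter. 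Second, after the time integration the boundary term $\int_M|\n u|^2\phi_R^2 w\,d\mu_{g(\delta)}$ at $t=\delta$ is a fixed-time slice integral; it is not ``finite by \eqref{eq growth nabla u}'', which is a space-time bound, and you have no pointwise bound on $|\n u|$ at $t=\delta$. The standard fix is to first prove \eqref{eq growth nabla u} on the larger interval $[\delta/2,T-\delta]$ (permissible since $\delta\in(0,T/3)$ is arbitrary) and then either insert a temporal cutoff vanishing at $t=\delta/2$ or pick a good initial slice by Fubini. With these two repairs your argument coincides with the paper's proof of \eqref{eq growth nabla square u}.
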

\begin{proof}
We derive from $u_t=\Delta u$ that 
\begin{equation*}
    (\p_t-\Delta)u^2 =-2|\n u|^2.
\end{equation*}
Multiplying both sides by a cut-off function $\vp^2$ (independent of time) and integrating by parts yield
\begin{eqnarray*}
&& 2 \int_\delta^{T-\delta} \int_M \vp^2 |\n u|^2 dx dt \\
&=& -\int_\delta^{T-\delta} \int_M \vp^2 (\p_t -\Delta) u^2 dx dt\\
&\leq& \int_M \vp^2 u^2(x,\delta)  dx +4 \int_\delta^{T-\delta} \int_M \vp u |\n \vp| |\n u| dx dt \\
&\leq& \int_M \vp^2 u^2(x,\delta)  dx +4 \int_\delta^{T-\delta} \int_M |\n \vp|^2 u^2 dx dt \\
&& + \int_\delta^{T-\delta} \int_M \vp^2 |\n u| ^2 dx dt. 
\end{eqnarray*}
Now \eqref{eq growth nabla u} follows from \eqref{eq growth u}. Applying the same argument to 
\begin{equation*}
(\p_t-\Delta)|\n u|^2 =2 |\n^2 u|^2
\end{equation*}
produces \eqref{eq growth nabla square u}. 

\end{proof}

\begin{proof}[Proof of Theorem \ref{thm matrix Harnack heat equation}: the complete noncompact case]
Recall that
$$Z_{ij}:=H_{ij}+c(t)g_{ij},$$ 
where $c(t)=\frac{\k}{1-e^{-2\k t}}$, satisfies \eqref{eq evolution Z_{ij} sec geq 0} under our curvature assumptions. 
Define 
\begin{equation*}
    \widetilde{Z}_{ij} :=tuZ_{ij}.
\end{equation*}
Using \eqref{eq evolution Z_{ij} sec geq 0}, we derive that 
\begin{eqnarray}\label{eq evolution widetilde Z_ij}
(\p_t -\Delta) \widetilde{Z}_{ij} &=& \tfrac{1}{t}\widetilde{Z}_{ij} +\tfrac{2}{tu}\widetilde{Z}^2_{ij}-4c(t)\widetilde{Z}_{ij} \\ \nonumber
 && +2R_{ikjl}\widetilde{Z}_{kl}-R_{ik}\widetilde{Z}_{jk}-R_{jk}\widetilde{Z}_{ik}.
\end{eqnarray}
Next, let's consider the function $\a(x,t)$ on $M\times [0,T]$ defined by
\begin{equation*}
    \a(x,t) =\inf\{s \geq 0: \widetilde{Z}_{ij}(x,t)+sg_{ij}(x,t) \geq 0 \}.
\end{equation*}
In other words,
\begin{equation}\label{eq alpha def}
    \a(x,t)=\max\{ 0, -\l_1(x,t)\}
\end{equation}
on $M\times (0,T)$, where $\l_1(x,t)$ is the smallest eigenvalue of $\widetilde{Z}_{ij}$ at $(x,t)$. 
The key is to show that 
\begin{equation}\label{eq alpha PDE}
    (\p_t-\Delta)\a \leq 0
\end{equation}
holds in the following barrier sense: for any $(x,t)\in M \times (0, T)$, we can find a neighborhood $U \subset M \times (0, T)$ of $(x,t)$ and a smooth (lower barrier) function $\phi: U \to \R$ such that $\phi \leq \a$ on $U$, with equality at $(x,t)$, and
\begin{equation}\label{eq phi}
    (\p_t-\Delta)\phi \leq 0
\end{equation}
at $(x,t)$. 
Note that inequality \eqref{eq alpha PDE} holds also in the viscosity sense and in the sense of distributions by standard arguments (see \cite{MMU14} for an elliptic version). 

To prove \eqref{eq alpha PDE}, we consider $Y_{ij}(x,t):=\widetilde{Z}_{ij}(x,t)+\a(x,t)g_{ij}(x,t)$. Fix $(x,t)\in M\times (0,T)$. By the definition of $\a(x,t)$, we have $Y_{ij}\geq 0$ on $M\times [0,T]$ and there exists a unit vector $e_1 \in T_xM$ such that $Y(e_1,e_1)=0$. We extend $e_1$ to an orthonormal basis $\{e_i\}_{i=1}^n$ of $T_xM$ consisting of eigenvector of $\widetilde{Z}_{ij}$ such that $\widetilde{Z}(e_i)=\l_i e_i$ with $\l_1 \leq \cdots \leq \l_n$. Next, we extend $\{e_i\}_{i=1}^n$ smoothly in a neighborhood $U$ of $(x,t)$ by parallel translation along radial geodesics using $\n^{g(t)}$ and regard the resulting vector fields, still denoted by $\{e_i\}_{i=1}^n$, as stationary in time in the sense that $\p_t e_i=0$ for each $1\leq i \leq n$. 

If $\l_1(x,t)>0$, then $\a\equiv 0$ near $(x,t)$ and the barrier function $\phi \equiv 0$ will fulfill \eqref{eq phi}. If $\l_1(x,t)\leq 0$, then the function $\phi(x,t)=-\frac{\widetilde{Z}(e_1,e_1)}{g(t)(e_1,e_1)}$ is defined in $U$ and gives a lower barrier for $\a(x,t)$ in that neighborhood. Therefore, we get using \eqref{eq evolution widetilde Z_ij} that at $(x,t)$,
\begin{eqnarray*}
(\p_t-\Delta)\phi &=&  -(\p_t-\Delta) \frac{\widetilde{Z}(e_1,e_1)}{g(t)(e_1,e_1)}\\
&=& -\tfrac{1}{t}\l_1 -\tfrac{2}{tu}\l_1^2 + 4c(t)\l_1 - 2R_{1k1k}\l_k \\ 
&\leq & \tfrac{1}{t}\a -\tfrac{2}{tu}\a^2 -4c(t)\a + 2\k \a \\
&\leq & \a\left(\tfrac{1}{t}-4c(t)+2\k \right) \\
&\leq & 0,
\end{eqnarray*}
where we have used \eqref{eq alpha def}, the estimate
$$R_{1k1k}\l_k = R_{1k1k}(\l_k-\l_1) +R_{11}\l_1 \geq -\a R_{11} \geq -\a \k,$$
and the elementary inequality
$$\tfrac{1}{t}-4c(t)+2\k < 0 \text{ for } t>0.$$
Hence, we have proved $(\p_t-\Delta)\a \leq 0$ in the barrier sense.

Without loss of generality, we may assume $u\geq \varepsilon >0$. This is because once the estimate has been established for $u_\varepsilon:=u+\varepsilon$, one can then let $\varepsilon \to 0$ and get the estimate for any positive $u$. 
By shifting the time from $t$ to $t+\delta$, we have the growth bounds \eqref{eq growth u}, \eqref{eq growth nabla u}, and \eqref{eq growth nabla square u}, which implies that there exists $b>0$ such that 
\begin{equation}\label{eq integral bound}
\int_0^T \int_M \exp{\left(-b d^2_0(x,p) \right)} \a^2(x,t) dx dt < \infty,
\end{equation}
Since $\a(x,0)=0$ for all $x\in M$, we can use the maximum principle (see \cite[Theorem 12.22]{Chowbookpart2}) to conclude that $\a(x,t)\leq 0$ on $M\times [0,T]$. 

\end{proof}

\section{Matrix Harnack for the Heat Equation: the General Case}
In this section, we prove Theorem \ref{thm matrix Harnack heat equation general case}. Without the nonnegativity of sectional curvatures, we have to estimate the terms involving curvature and derivatives of $u$ and the proof becomes much more involved. Here we employ an idea that has has been used in \cite{FudanNotes}, \cite{YZ22}, and \cite{Zhang21}, namely we first prove the estimate for the heat kernel and then derive the estimate for any positive solution to the heat equation.

\begin{proof}[Proof of Theorem \ref{thm matrix Harnack heat equation general case}]
The proof is divided into three steps.

{\it Step 1.} We derive a partial differential inequality satisfied by the smallest eigenvalue of $Q_{ij}:=t H_{ij}$, where $H_{ij} = \n_i \n_j \log u$ as before. 

A straightforward computation using \eqref{eq evolution H_ij S3} shows that $Q_{ij}$ satisfies
\begin{eqnarray}\label{eqQij}
(\p_t-\Delta)Q_{ij} 
&=& \frac{1}{t}Q_{ij} +\frac{2}{t}Q^2_{ij} +2R_{ikjl}Q_{kl}-R_{ik}Q_{jk}-R_{jk}Q_{ik} \\ \nonumber
&& +2tR_{ikjl}\n_k v \n_l v +2\n_k Q_{ij} \n_k v. 
\end{eqnarray} Here and through out this section $v =\log u$.

Let $\lambda_1$ be the minimum negative eigenvalue of $Q_{ij}$ in $M^n \times [0, t_0]$ which is reached at the point $(x_0, t_0)$. Note that we are done with the proof if no such $\lambda_1$ exists. Our task is to find a lower bound for $\lambda_1$. Let $\eta$ be a unit eigenvector with respect to the metric $g(t_0)$ at $x_0$. Using parallel transport, we extend $\eta$ along geodesic rays starting from $x_0$ so that it becomes a parallel unit vector field in a neighborhood of $x_0$ with respect to $g(t_0)$. This vector field, still denoted by $\eta=\eta(x)$, is regarded as stationary in the time interval $[0, t_0]$.
Now consider the vector field 
\[
\xi=\xi(x, t) = \frac{\eta(x)}{\Vert \eta(x) \Vert_{g(x, t)}}
\]which is a unit one with respect to $g(t)$. In local coordinates, we write 
$\xi=(\xi_1, ..., \xi_n)$ and we also introduce the scalar function 
\[
\Lambda =\Lambda (x, t) = \xi_i Q_{ij} \xi_j =\xi^T (Q_{ij}) \xi.
\]Notice that $\Lambda$ is a smooth function defined in a neighborhood of $x_0$ on the time interval $[0, t_0]$ and reaches its minimum value $\lambda_1$ at the point $(x_0, t_0)$.
Using \eqref{eqQij}, we find that 
\begin{eqnarray*}
&&\xi_i [(\p_t-\Delta)Q_{ij} ] \xi_j\\
&=& \frac{1}{t}\xi_i Q_{ij} \xi_j +\frac{2}{t}\xi_i Q^2_{ij} \xi_j +2R_{ikjl}Q_{kl}\xi_i \xi_j-R_{ik}Q_{jk} \xi_i \xi_j-R_{jk}Q_{ik} \xi_i \xi_j  \\ 
&& +2tR_{ikjl}\n_k v \n_l v  \xi_i \xi_j + 2\n_k Q_{ij} \n_k v   \xi_i \xi_j.
\end{eqnarray*}
Recall that that at $t=t_0$, $\xi$ is a parallel vector field and 
\[
\partial_t (\xi_i Q_{ij} \xi_j)=
\partial_t \left(\frac{\eta_i Q_{ij} \eta_j}{g_{ij} \eta_i \eta_j}\right)
= \xi_i \partial_t Q_{ij} \xi_j + 2 \xi_i Q_{ij} \xi_j R_{kl}
\xi_k \xi_l.
\]Combining the above two identities, we deduce, at $(x,t)=(x_0, t_0)$, that
\begin{eqnarray}\label{eqlamb}
(\p_t-\Delta) \Lambda
&=& \frac{1}{t} \Lambda +\frac{2}{t} \Lambda^2 +2R_{ikjl}Q_{kl}\xi_i \xi_j  \\ \nonumber
&& +2tR_{ikjl}\n_k v \n_l v  \xi_i \xi_j + 2\n_k \Lambda \n_k v.
\end{eqnarray} 
Notice the terms involving the Ricci curvature are canceled. We remark that this equation may not be satisfied for $t<t_0$ but the proof uses this equation only at $(x,t)=(x_0, t_0)$. As mentioned, $\Lambda$ reaches its minimum value at $(x_0, t_0)$. Therefore, \eqref{eqlamb} implies, at $(x_0, t_0)$
\begin{eqnarray}
\label{ineqlamb1}
2 \lambda^2_1 \le - \lambda_1  - 2 t R_{ikjl}Q_{kl}\xi_i \xi_j  
- 2t^2 R_{ikjl}\n_k v \n_l v  \xi_i \xi_j.
\end{eqnarray}

Next, we aim to bound the curvature terms on the right-hand side of \eqref{ineqlamb1}. First, we write
\begin{eqnarray*}
 2R_{ikjl}Q_{kl} &=& 2(R_{ikjl}+K(g_{ij}g_{kl}-g_{il}g_{jk}))Q_{kl} -2K(g_{ij}g_{kl}-g_{il}g_{jk})Q_{kl}\\
 &:=& 2 w_{ikjl} Q_{kl} -2K(g_{ij}g_{kl}-g_{il}g_{jk})Q_{kl}.
\end{eqnarray*}
Besides the lowest negative eigenvalue $\lambda_1$, let $\lambda_2$,  ..., $\lambda_n$ be other eigenvalues of $(Q_{ij})$ at $(x_0, t_0)$ arranged in increasing order. After diagonalizing $(Q_{ij})$ at $(x_0, t_0)$ with an orthonormal basis $\{\xi, ... \}$, we deduce 
\begin{eqnarray*}
 R_{ikjl}Q_{kl} \xi_i \xi_j 
 &=& \sum_k w_{1k1k} \lambda_k -K(g_{ij}g_{kl}-g_{il}g_{jk})Q_{kl} \xi_i \xi_j \\
 &=&\sum_{\lambda_k \ge 0} w_{1k1k} \lambda_k + \sum_{\lambda_k < 0} \,  w_{1k1k} \lambda_k
 - K t \Delta v + K \lambda_1\\
 &\geq& \sum_{\lambda_k < 0} \,  w_{1k1k} \lambda_1 -K t \Delta v +K \lambda_1.
\end{eqnarray*}
Here we just used the assumption on sectional curvature 
$$
R_{ikjl}\geq -K(g_{ij}g_{kl}-g_{il}g_{jk}) 
$$ or $w_{ikjl}\geq 0$ and the identity 
\[
\tr (Q_{ij})= t \Delta  v.
\] 
Using the upper bound of the sectional curvature 
$$
R_{ikjl}\le K(g_{ij}g_{kl}-g_{il}g_{jk}), 
$$ we see that
\[
\sum_k w_{1k1k} \le 2K \sum_k (g_{11}g_{kk}-g_{1k}g_{1k})= 2K (n-1)
\]and we arrive at, via  $\lambda_1<0$, that
\begin{equation}
\label{RQXX<}
R_{ikjl}Q_{kl} \xi_i \xi_j \ge (2n-1) K \lambda_1  -K t \Delta v.
\end{equation}

Using the lower bound on the sectional curvatures again, noticing $\xi=(1, 0, ..., 0)$ and $g_{ij} =\delta_{ij}$ at $(x_0, t_0)$ by our choice of the orthonormal coordinates,  we have that 
\begin{eqnarray}\label{Rvv<}
R_{ikjl}\n_k v \n_l v \xi_i \xi_j &=& R_{1k1l} \n_k v \n_l v \\ \nonumber
&\geq&  -K (g_{kl}-g_{1l}g_{1k}) \n_k v \n_l v \\ \nonumber
&=& -K|\n v|^2 + K |\n_1 v|^2 \\ \nonumber
&\ge&  -K|\n v|^2.
\end{eqnarray} 
Substituting \eqref{RQXX<}, \eqref{Rvv<} into \eqref{ineqlamb1}, we deduce, for $v = \log u$,
\begin{eqnarray}
\label{ineqlamb2}
2 \lambda^2_1 \le - \lambda_1  - 2 t (2n-1) K  \lambda_1  + 2 K t^2 \Delta v 
+ 2 t^2 K|\n v|^2.
\end{eqnarray}

{\it Step 2.}  We need to bound the right hand side of \eqref{ineqlamb2}. This might be difficult for all positive solutions $u$ but doable for the heat kernel.

In this step, we assume $u(x,t)=G(x,t,y):=G(x, t; y, 0)$ is the heat kernel with a pole at $y\in M, t=0$ and $v=\log u$. It is known that the following curvature-free bound holds:
\begin{equation}
\label{tt0dlnu}
    (t-(t_0/2))^2 |\n \log u|^2 \leq (t-(t_0/2)) \log \frac{\sup\{u(x,t) : (x,t) \in M \times [t_0/2,t_0] \}}{u}
\end{equation}
for all $(x,t) \in M \times [t_0/2,t_0] $ (see \cite{Zhang06} or \cite{CH09}). Under the condition of bounded sectional curvature, the upper and lower bound for the heat kernel can be obtained in a classical way in any finite time interval and hence are more or less known. By now, we know that only the pointwise bound on the scalar curvature and initial volume non-collapsing condition are needed for the heat kernel bounds to hold (see \cite[Theorem 1.4]{BZ17}). Using that theorem repeatedly over fixed time intervals and taking advantage of the reproducing formula of the heat kernel, we know that the following bounds hold: there exists a numerical positive constant $C_1$ and another positive constant $C_2$ depending only on the volume non-collapsing constant of $g_0$ and the dimension such that
\begin{align}\label{HKbound}
\frac{1}{C_2 t^{n/2}} e^{ -C_2 K t- C_1 d^2(x, y, t)/t} \le G(x, t, y) \le \frac{C_2}{t^{n/2}} e^{ C_2 K t-d^2(x, y, t)/(C_1 t)}.
\end{align} 
Alternatively, since the sectional curvature is bounded, one can just follow the classical method by Li-Yau to obtain such bounds.
The above bounds are far from optimal for large times.  Since the manifold is compact, the large-time behavior of the heat kernel is relatively simple since positive solutions tend to be constant. But we will not pursue an optimal large-time bound this time.

Substituting \eqref{HKbound} to \eqref{tt0dlnu}, we obtain, for all $t_0>0$
\begin{equation}\label{tt0dlnu2}
    t^2_0 |\n \log u|^2(x, t_0) \leq C_3 (K+1) t^2_0+ 2 C_1 \sup_{t \in [t_0/2, t_0]} d^2(x, t, y).
\end{equation} Here $C_3$ depends only on the volume noncollapsing constant of $g_0$ and the dimension.

Next, we need to find an upper bound for the term $t^2 \Delta \log u= t^2(\frac{\Delta u}{u} - 
\frac{|\n u|^2}{u^2})$. In the stationary case, this is done in Hamilton \cite[Lemma 4.1]{Hamilton93}. Following that proof, the Ricci flow produces one extra term involving the Ricci curvature. Since the sectional curvature is bounded, we can treat this term without much difficulty.

Let $L$ be the operator
\begin{equation}
L = \Delta + 2 \n \log u \n - \partial_t.
\end{equation} The following identities are well known and also follow from the calculations in Section 2. 
\begin{equation}
\label{Lddu/ulnu}
L (\Delta u/u) = 2 R_{ij} \n_i \n_j u / u, \quad L (|\n \log u|^2) = 2|\n_i \n_j \log u |^2.
\end{equation} 
Therefore,
\begin{eqnarray*}
 && L \left(\frac{\Delta u}{u} +  |\n \log u|^2 \right) \\
 &=&   2|\n_i \n_j \log u |^2 - 2 R_{ij} \left( \frac{\n_i\n_j u}{u} - 
 \frac{\n_i u \n_j u}{u^2} \right) - 2 R_{ij} \frac{\n_i u \n_j u}{u^2}\\
 &=& 2|\n_i \n_j \log u |^2 - 2 R_{ij} \n_i \n_j \log u - 2 R_{ij} \frac{\n_i u \n_j u}{u^2}\\
 &=&|\n_i \n_j \log u |^2 + | \n_i \n_j \log u - R_{ij} |^2 - R^2_{ij}- 2 R_{ij} \frac{\n_i u \n_j u}{u^2}\\
 & \ge& \frac{1}{n} |\Delta \log u|^2 + \frac{1}{n} |\Delta \log u -R|^2 - C_n K^2 - C_n K |\n \log u|^2.
\end{eqnarray*} Here, $C_n$ is a dimensional constant and the assumption $|R_{ijkl}| \le K (g_{ik}g_{jl}-g_{il}g_{jk})$ has been used.  Writing
\[
Y= \frac{\Delta u}{u} +  |\n \log u|^2,
\]
then the above inequality implies
\begin{eqnarray*}
 L Y &\ge&   \frac{1}{n} \left|Y- 2|\n \log u|^2 \right|^2
+ \frac{1}{n} \left|Y- 2|\n \log u|^2 -R \right|^2 \\
&& - C_n K^2 - C_n K |\n \log u|^2.  
\end{eqnarray*}
Therefore,
\begin{eqnarray}\label{Lt2Y>}
 L (t^2 Y) &\ge&   \frac{1}{n t^2} \left|t^2 Y- 2 t^2|\n \log u|^2 \right|^2 \\ \nonumber
 && + \frac{1}{n t^2} \left|t^2 Y- 2 t^2|\n \log u|^2 - t^2 R \right|^2 \\ \nonumber
&& - C_n t^2 K^2 - C_n K t^2 |\n \log u|^2 - 2 \frac{t^2 Y}{t}.   
\end{eqnarray}

Let $T>0$ be any fixed time and $(x_0, t_0)$ be a  maximal point of $t^2 Y$ in $M^n \times (0, T]$ where $t^2 Y$ reaches a positive maximum value. Note that $t_0$ may be less than $T$ but the argument by maximum principle together with \eqref{tt0dlnu2}  is good enough to bound $t^2 Y$ up to $T$ and hence for all time. By \eqref{Lt2Y>}, we know, at $(x_0, t_0)$, the following inequality holds:
\[
\frac{1}{n t^2} \left|t^2 Y- 2 t^2|\n \log u|^2 \right|^2 
\le  C_n t^2 K^2 + C_n K t^2 |\n \log u|^2 + 2 \frac{t^2 Y}{t}.   
\]Hence for all $t \in (0, T]$, we have 
\[
\frac{1}{2 n t^2} |t^2 Y|^2 \le  \left(\frac{1}{n t^2}  |2 t^2|\n \log u|^2 |^2 
+ C_n t^2 K^2 + C_n K t^2 |\n \log u|^2 + 2 \frac{t^2 Y}{t} \right)_{(x_0, t_0)}.   
\]
Now we take $u$ to be the heat kernel $G(x, t, 0)$. Using \eqref{tt0dlnu2} we conclude that
\[
t^2 Y \le 4n t +  C_3 (K+1) t^2 + 2 C_1  \diam^2
\]which yields, since $Y= \frac{\Delta u}{u} +  |\n \log u|^2$, that
\begin{equation}
\label{t2ddu/ujie}
 t^2 \frac{\Delta u}{u} \le 4nt  +  C_3 (K+1) t^2 + 2 C_1  \diam^2.
\end{equation}

From \eqref{ineqlamb2} and the relation $\Delta v = \Delta \log u =\frac{\Delta u}{u} -  |\n \log u|^2 $, we see that
\begin{eqnarray}
\label{ineqlamb22}
2 \lambda^2_1 \le - \lambda_1  - 2 t (2n-1) K  \lambda_1  + 2 K t^2 \frac{\Delta u}{u}.
\end{eqnarray} 
Using \eqref{t2ddu/ujie}, we deduce that
\begin{eqnarray}\label{ineqlamb23}
2 \lambda^2_1 &\le&  - \lambda_1  - 2 t (2n-1) K  \lambda_1  \\ \nonumber
&& + 2 K [4nt  +  C_3 (K+1) t^2 + 2 C_1 \diam^2].
\end{eqnarray}
Since $\lambda_1<0$ by assumption, we conclude, after elementary estimates, 
\begin{eqnarray}
\label{ineqlamb24}
t (H_{ij})& \ge&  \lambda_1 (g_{ij}) \\ \nonumber
& \ge& \left(- \frac{1}{2} -4 \sqrt{nK t} 
   -C_2 (K+1)t -C_1 \sqrt{K} \diam \right) (g_{ij})\\ \nonumber
   &:=&  \left(- \tfrac{1}{2} -\beta(t, n, K)\right) (g_{ij}),
\end{eqnarray} 
where $C_1$ is a numerical constant and $C_2$ depends only on the non-collapsing constant $v_0$ of $g_0$ and the dimension. Note that we have renamed $C_3$ to $C_2$ for consistency with the statement of the theorem. 
\medskip

{\it Step 3.} Finally, we show the matrix Harnack estimate holds for any positive solution $u(x,t)$ to the heat equation. 

Note that
\begin{equation*}
u(x,t) =\int_M G(x,t,y)u_0(y) dy.
\end{equation*}
Differentiating under the integral yields
\begin{eqnarray*}
\n_j u(x, t) &=& \int_M \n_j G(x,t,y) u_0(y) dy, \\
\n_i \n_j u(x, t) &=& \int_M \n_i \n_j G(x,t,y) u_0(y) dy. 
\end{eqnarray*} 
Here and later in the step $dy=dg(0)(y)$ etc.
Therefore, 
\begin{eqnarray*}
  && t u^2 \n_i\n_j \log u(x, t) \\
  &=&t ( u \n_i \n_j u -\n_i u \n_j u)(x, t) \\
  &=& \int_M t G(x,t,z) u_0(z) dz \int_M \n_i \n_j G(x,t,y) u_0(y) dy \\
  && -\int_M t \n_j G(x,t,z) u_0(z) dz \int_M \n_j G(x,t,y) u_0(y) dy.
\end{eqnarray*} 
Hence,
\begin{eqnarray}\label{tu2uij}
&&t u^2 \n_i\n_j \log u(x, t) \\ \nonumber
  &=& \int_M \int_M t G(x,t,z)  \n_i \n_j G(x,t,y) u_0(z) u_0(y) dzdy \\ \nonumber
  && -\int_M \int_M  t \n_j G(x,t,z) \n_j G(x,t,y)  u_0(z) u_0(y) dzdy.
\end{eqnarray}  

Fixing a space time point $(x, t)$, $t>0$. Let us diagonalize $(Q_{ij})=t (\n_i\n_j\log u(x, t))$
using its orthonormal eigenvectors $\{e_1, ..., e_n \}$ such that $e_1$ corresponds to the smallest eigenvalue $\lambda_1$. By \eqref{tu2uij}, we have
\begin{eqnarray}\label{tu2u11}
&&t u^2 \n_1\n_1 \log u(x, t) \\ \nonumber
  &=& \int_M \int_M t G(x,t,z)  \n_1 \n_1 G(x,t,y) u_0(z) u_0(y) dzdy \\ \nonumber
  && -\int_M \int_M  t \n_1 G(x,t,z) \n_1 G(x,t,y)  u_0(z) u_0(y) dzdy.
\end{eqnarray} 
According to \eqref{ineqlamb24} in Step 2, the following holds
\begin{eqnarray*}
t \left( \frac{\n_1 \n_1 G(x,t,y)}{G(x, t, y)} - \frac{|\n_1 G(x,t,y)|^2}{G^{2}(x, t, y)}\right) \ge 
-\frac{1}{2}-\beta(t, n, K)
\end{eqnarray*}
so that
\begin{eqnarray*}
t \n_1 \n_1 G(x,t,y)  &\ge& t \frac{|\n_1 G(x,t,y)|^2}{G(x, t, y)}  \\
&& -\left(\tfrac{1}{2}+\beta(t, n, K)\right)  G(x, t, y). 
\end{eqnarray*}
Substituting the last inequality into \eqref{tu2u11} and regrouping the third term on the right-hand side, we deduce
\begin{eqnarray}
\label{tu2u112}
&& t u^2 \n_1\n_1 \log u(x, t) \\ \nonumber
  &\ge& \int_M \int_M  G(x,t,z) t \frac{|\n_1 G(x,t,y)|^2}{G(x, t, y)}  u_0(z) u_0(y) dzdy \\ \nonumber
  && - [\frac{1}{2}+\beta(t, n, K)] \int_M \int_M   G(x,t,z) G(x,t,y)  u_0(z) u_0(y) dzdy\\ \nonumber 
  && -\int_M \int_M \frac{  \sqrt{t} \n_1 G(x,t,z) \sqrt{G(x,t,y)}}{ \sqrt{G(x,t,z)}} 
  \sqrt{u_0(z) u_0(y)} \\ \nonumber 
  && \times \frac{  \sqrt{t} \n_1 G(x,t,y) \sqrt{G(x,t,z)}}{ \sqrt{G(x,t,y)}} 
  \sqrt{u_0(z) u_0(y)}  dzdy.
\end{eqnarray}
Observe that the first term on the right-hand side dominates the third term due to the Cauchy-Schwarz inequality and the integral in the second term is $u^2(x,t)$. Hence we have proven
\[
t \n_1\n_1 \log u(x, t) \ge - \frac{1}{2} - \beta(t, n, K).
\]
Since the left-hand side is the smallest eigenvalue of $(Q_{ij})$, the proof is done.
\end{proof}

\section{Parabolic Frequency Monotonicity}

\subsection{Parabolic Frequency}
Let $(M^n,g(t))$, $t \in [0,T]$, be a complete solution to the Ricci flow. 
Let $u$ be a solution to the (backward) conjugate heat equation $(\p_t+\Delta)u=Ru$. 
Let $G(x,x_0,t)$ be the heat kernel of the heat equation \eqref{heat equation} with the pole at $(x_0, 0)$.  We defined in the Introduction section that
\begin{eqnarray}
I(t) &:=& \int_M u^2(x,t) G(x,x_0, t) dg(t)(x), \label{eq def I(t)} \\
D(t) &:=& \int_M |\nabla u(x,t)|^2 G(x, x_0,t) dg(t)(x), \label{eq def D(t)} \\
S(t) &:=& \int_M u^2(x,t)R(x,t)  G(x, x_0,t) dg(t)(x) \label{eq def S(t)}.
\end{eqnarray} 
Here $dg(t)(x):=d\mu_{g(t)}$ is the Riemannian measure induced by $g(t)$. 

Below we will simply write $I(t)=\int_M u^2G dg$ and similar notations for other integrals if no confusion arises.

\begin{lemma} $I(t)$ defined in \eqref{eq def I(t)} satisfies
\begin{equation}\label{eq I derivative}
    I'(t) = 2D(t)+S(t). 
\end{equation}

\end{lemma}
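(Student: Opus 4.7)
The plan is a direct calculation: differentiate $I(t)$ under the integral sign, substitute the three relevant evolution equations ($u_t = -\Delta u + Ru$ from the backward conjugate heat equation, $G_t = \Delta G$ from the heat equation, and $\partial_t\, d\mu_{g(t)} = -R\, d\mu_{g(t)}$ from the Ricci flow), and then integrate by parts once to recombine the Laplacian terms.

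More concretely, I would first write
\begin{equation*}
I'(t) = \int_M \bigl(2 u u_t\, G + u^2 G_t - u^2 R\, G\bigr)\, d\mu_{g(t)}
\end{equation*}
and plug in the evolution equations to obtain
\begin{equation*}
I'(t) = \int_M \bigl(-2 u \Delta u\, G + 2 u^2 R\, G + u^2 \Delta G - u^2 R\, G\bigr)\, d\mu_{g(t)}.
\end{equation*}
Note that two of the three scalar-curvature contributions already combine to leave exactly $+\int_M u^2 R\, G\, d\mu_{g(t)} = S(t)$. The remaining Laplacian terms $-2u\Delta u\, G + u^2 \Delta G$ are then handled by integration by parts: the identity $\int_M -2 u \Delta u\, G = 2\int_M |\nabla u|^2 G + 2\int_M u\, \nabla u \cdot \nabla G$ together with $\int_M u^2 \Delta G = -2\int_M u\, \nabla u \cdot \nabla G$ causes the cross terms involving $\nabla G$ to cancel, producing $2 D(t)$. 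Combining gives $I'(t) = 2 D(t) + S(t)$.

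The only genuine subtlety is justifying differentiation under the integral and the integration by parts on a (possibly noncompact) complete manifold; this is exactly what is deferred to the global assumption stated in the \textbf{Note} at the end of the Introduction, which posits sufficient growth control on $u$ and the geometry to make all such integrals convergent and the boundary terms vanish. Given that standing hypothesis, no maximum principle or Harnack estimate is needed here, and the lemma follows from a one-step computation.
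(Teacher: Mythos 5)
Your proposal is correct and follows essentially the same computation as the paper: differentiate under the integral, substitute $u_t=Ru-\Delta u$, $G_t=\Delta G$, and $\partial_t\,d\mu_{g(t)}=-R\,d\mu_{g(t)}$, and cancel the Laplacian contributions by integration by parts (the paper phrases this via $2u\Delta u=\Delta u^2-2|\nabla u|^2$ and the self-adjointness of $\Delta$, which is the same cancellation you obtain by pairing the $\nabla G$ cross terms). Your remark that the justification of differentiating under the integral and of the integration by parts rests on the standing assumptions in the paper's Note is also consistent with the paper's treatment.
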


\begin{proof} 
Under the Ricci flow, the measure $d{g(t)}$ evolves by
$\p_t (d{g(t)})=-R d {g(t)}.$ 
A straightforward computation shows that
\begin{eqnarray*}
I'(t) &=& \int_M 2uu_t G dg +\int_M u^2 G_t dg-\int_M u^2 R G dg\\
&=& \int_M 2u(Ru-\Delta u )G dg +\int_M u^2 \Delta G dg-\int_M u^2 R G dg\\
&=& \int_M u^2 RG dg-\int_M (\Delta u^2- 2|\nabla u|^2) G dg+ \int_M \Delta u^2 G dg\\
&=& 2\int_M |\nabla u|^2 G dg+\int_M u^2 RG dg\\
&=& 2D(t)+S(t). 
\end{eqnarray*}
\end{proof}

\begin{lemma}
$D(t)$ defined in \eqref{eq def D(t)} satisfies
\begin{eqnarray}\label{eq D'(t)}
D'(t)
&=& 2 \int_M (\Ric-\n^2 f) (\n u, \n u) G dg+2 \int_M (\Delta_f u)^2 G dg \\ \nonumber
&& -2\int_M Ru(\Delta_f u) G  dg -\int_M |\n u|^2 R G dg, 
\end{eqnarray}
where $f=-\log G$ and $\Delta_f := \Delta -\langle \n f, \n \cdot \rangle$. 
\end{lemma}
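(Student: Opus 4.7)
The plan is to differentiate $D(t)$ under the integral sign, organize the resulting four pieces, and then convert them into the claimed form by repeated integration by parts with respect to the weighted measure $G\, d\mu_{g(t)}$. Writing $D(t)=\int_M g^{ij}(t)\n_i u\, \n_j u \cdot G\, d\mu_{g(t)}$ and using the standard Ricci flow evolutions $\p_t g^{ij}=2R^{ij}$ and $\p_t(d\mu_{g(t)})=-R\, d\mu_{g(t)}$, together with $u_t=Ru-\Delta u$ and $G_t=\Delta G$, I obtain
\begin{align*}
D'(t)&=2\int_M \Ric(\n u,\n u)\,G\, dg + 2\int_M \langle \n u_t,\n u\rangle\, G\, dg \\
&\quad+ \int_M |\n u|^2\Delta G\, dg - \int_M |\n u|^2 R G\, dg.
\end{align*}
The first and last terms already match two terms in the target identity.

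Next, I would reshape the middle two terms. Integrating by parts in the second term and using the self-adjointness of $\Delta_f$ with respect to $G\,dg$ (which follows from $\n G=-G\n f$) yields
\[
2\int_M \langle \n u_t,\n u\rangle G\,dg = -2\int_M u_t\, \Delta_f u\, G\, dg = -2\int_M (Ru-\Delta u)\Delta_f u\, G\, dg.
\]
Decomposing $\Delta u=\Delta_f u + \langle \n f,\n u\rangle$ then splits this into
\[
-2\int_M Ru\,\Delta_f u\, G\, dg + 2\int_M (\Delta_f u)^2 G\, dg + 2\int_M \langle \n f,\n u\rangle\Delta_f u\, G\, dg.
\]
The first two summands are exactly two more terms appearing in the statement; only the last, and the term $\int |\n u|^2\Delta G\, dg$ from the initial expansion, remain to be dealt with.

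To finish, I would show that these two leftover pieces combine precisely to $-2\int_M \n^2 f(\n u,\n u)\,G\, dg$. Self-adjointness of $\Delta_f$ gives
\[
\int_M \langle \n f,\n u\rangle \Delta_f u\, G\, dg = -\int_M \langle \n\langle \n f,\n u\rangle, \n u\rangle G\, dg,
\]
and the Leibniz rule on the right-hand side yields $\n^2 f(\n u,\n u)+\tfrac12 \langle \n f,\n|\n u|^2\rangle$. Meanwhile, integrating by parts and using $\n G=-G\n f$ gives $\int |\n u|^2\Delta G\, dg = \int \langle \n|\n u|^2,\n f\rangle G\, dg$. Doubling the first identity and adding it to the second produces an exact cancellation of the $\langle \n f, \n|\n u|^2\rangle$ terms and leaves $-2\int \n^2 f(\n u,\n u)\,G\, dg$, completing the identification.

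The main obstacle is essentially bookkeeping: several terms involving $\langle \n f,\cdot\rangle$ and $\n^2 f(\n u,\n u)$ must cancel with correct coefficients, which requires being careful to use the Leibniz identity and the Bakry--\'Emery self-adjointness in the right order. No deep estimates are needed; the growth/integrability hypotheses stated at the end of the introduction justify the integrations by parts.
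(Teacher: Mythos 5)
Your proof is correct, and the bookkeeping checks out: the doubled self-adjointness identity plus the relation $\n G=-G\n f$ do cancel the $\langle \n f,\n |\n u|^2\rangle$ terms exactly and leave $-2\int_M \n^2 f(\n u,\n u)\,G\,dg$, so all signs in the lemma are reproduced. The opening step is identical to the paper's: differentiate under the integral using $\p_t g^{ij}=2R^{ij}$, $\p_t\, d\mu_{g(t)}=-R\,d\mu_{g(t)}$, $u_t=Ru-\Delta u$, and $G_t=\Delta G$, arriving at the same four-term expression. Where you diverge is the middle of the argument: the paper converts $\int_M |\n u|^2\Delta G\,dg$ into $\int_M \Delta|\n u|^2\,G\,dg$, applies the unweighted Bochner formula to cancel the $\langle\n u,\n\Delta u\rangle$ term, and then integrates the weighted Bochner formula against $G\,dg$ to trade $2\int_M \Ric(\n u,\n u)G\,dg+2\int_M|\n^2u|^2G\,dg$ for $2\int_M(\Delta_f u)^2G\,dg-2\int_M\n^2 f(\n u,\n u)G\,dg$. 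You never introduce $|\n^2 u|^2$ or any commutation formula at all: you split $\Delta u=\Delta_f u+\langle\n f,\n u\rangle$, use Bakry--\'Emery self-adjointness of $\Delta_f$ with respect to $G\,dg$, and extract the Hessian-of-$f$ term directly by the Leibniz rule. The two routes are of comparable length; yours is slightly more economical (only integration by parts and $\n G=-G\n f$ are needed), while the paper's route has the incidental benefit of exhibiting the intermediate expression involving $|\n^2 u|^2$, which is the form one would use if one wanted to keep the full Hessian term rather than complete it to $(\Delta_f u)^2$. As a side remark, your computation independently confirms that the coefficient of $\int_M Ru\,(\Delta_f u)\,G\,dg$ in the final formula is $-2$; the intermediate display \eqref{eq D'(t) 3} in the paper shows this term with a plus sign, which is a sign slip there (the lemma as stated, and your derivation, are correct).
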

\begin{proof}
Noticing $\p_t|\n u|^2 =2\Ric(\n u, \n u) +2 \langle \n u, \n u_t\rangle$ and $\p_t dg(t)=-Rdg(t)$, we get by differentiating under the integral that
\begin{eqnarray}\label{eq D'(t) 2}
D'(t) &=& 2\int_M \Ric(\n u, \n u) Gdg  +2\int_M \langle\n u, \n u_t \rangle G dg \\ \nonumber
&& +\int_M |\n u|^2 G_t dg-\int_M |\n u|^2 R G dg\\ \nonumber
&=& 2\int_M \Ric(\n u, \n u) G dg +2\int_M \langle\n u, \n (Ru -\Delta u) \rangle G dg\\ \nonumber
&& +\int_M |\n u|^2 \Delta G dg-\int_M |\n u|^2 R G dg\\ \nonumber
&=& 2\int_M \Ric(\n u, \n u) G dg+2\int_M \langle\n u, \n (Ru) \rangle G dg\\ \nonumber 
&& -2\int_M \langle\n u, \n (\Delta u) \rangle G dg +\int_M \Delta |\n u|^2 G dg \\ \nonumber
&& -\int_M |\n u|^2 R Gdg.
\end{eqnarray}
In view of the integration by parts 
\begin{equation*}
    \int_M \langle\n u, \n (Ru) \rangle G dg= -\int_M Ru \Delta_f u Gdg  
\end{equation*} 
and the Bochner formula 
\begin{equation*}
    \Delta |\n u|^2 =2|\n^2 u|^2 +2\langle \n u, \n \Delta u \rangle +2\Ric(\n u, \n u), 
\end{equation*}
we derive from \eqref{eq D'(t) 2} that
\begin{eqnarray}\label{eq D'(t) 3}
D'(t) &=& 4 \int_M \Ric(\n u, \n u) G dg + 2\int_M |\n^2 u|^2 G dg\\ \nonumber 
&& +2\int_M Ru \Delta_f u G dg-\int_M |\n u|^2 R G dg, 
\end{eqnarray}
The weighted Bochner formula 
\begin{equation*}
    \Delta_f |\n u|^2 =2|\n^2 u|^2 +2\langle \n u, \n \Delta_f u \rangle +2\Ric(\n u, \n u) +2\n^2 f(\n u, \n u),
\end{equation*}
implies that 
\begin{eqnarray*}
&& 2\int_M \Ric(\n u, \n u)G +2\int_M |\n^2 u|^2 G dg\\
&=& -2 \int_M \langle \n u, \n \Delta_f u \rangle G dg-2 \int_M \n^2 f(\n u, \n u) dg\\
&=& 2 \int_M (\Delta_f u)^2 G dg-2 \int_M \n^2 f(\n u, \n u) dg. 
\end{eqnarray*}
Substituting the above identity into \eqref{eq D'(t) 3} produces
\begin{eqnarray*}
D'(t)
&=& 2 \int_M (\Ric-\n^2 f) (\n u, \n u) Gdg  +2 \int_M (\Delta_f u)^2 G dg \\
&& -2\int_M Ru(\Delta_f u) G dg -\int_M |\n u|^2 R G dg.
\end{eqnarray*}
This proves \eqref{eq D'(t)}. 
\end{proof}

\begin{lemma}
$S(t)$ defined in \eqref{eq def S(t)} satisfies
\begin{eqnarray}\label{eq S'(t)}
S'(t)  &=& \int_M u^2 R^2 G dg  +2\int_M u^2 |\Ric|^2 G dg+2\int_M |\n u|^2 RG dg\\ \nonumber 
    && +2\int_M u^2 R\Delta G dg +4\int_M uR\langle \n u, \n G \rangle dg.  
\end{eqnarray}
\end{lemma}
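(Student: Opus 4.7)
The plan is a direct computation: differentiate $S(t)=\int_M u^2 R\, G\, d\mu_{g(t)}$ under the integral sign, expand using the four evolution rules in play, and then rearrange a single Laplacian via integration by parts to produce the asymmetric combination in \eqref{eq S'(t)}.

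First I would use the product rule, applying
$\partial_t(u^2) = 2u u_t = 2u(Ru-\Delta u) = 2Ru^2 - 2u\Delta u$ from the backward conjugate heat equation, Hamilton's formula $\partial_t R = \Delta R + 2|\Ric|^2$ for the evolution of scalar curvature under Ricci flow, $\partial_t G = \Delta G$, and $\partial_t (d\mu_{g(t)}) = -R\, d\mu_{g(t)}$. Combining these gives
\begin{equation*}
S'(t) = \int u^2 R^2 G\, dg + 2\int u^2 |\Ric|^2 G\, dg - 2\int uRG\,\Delta u\, dg + \int u^2 G\,\Delta R\, dg + \int u^2 R\,\Delta G\, dg,
\end{equation*}
where the coefficient $1$ in front of $\int u^2 R^2 G$ comes from $2R - R = R$ after cancelling the $R\,d\mu_g$ term.

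The main work is to massage the $\int u^2 G\,\Delta R$ term. I would transfer the Laplacian from $R$ onto $u^2 G$ by integrating by parts twice:
\begin{equation*}
\int u^2 G\,\Delta R\, dg = \int R\,\Delta(u^2 G)\, dg = \int R\bigl[(2|\nabla u|^2 + 2u\Delta u)G + 4u\langle \nabla u,\nabla G\rangle + u^2\Delta G\bigr]\, dg.
\end{equation*}
Substituting this expansion back into the expression for $S'(t)$ produces the term $+2\int uRG\,\Delta u$, which exactly cancels the $-2\int uRG\,\Delta u$ term from the differentiation step. The two $\int u^2 R\,\Delta G$ contributions combine to give the coefficient $2$, and the remaining terms $2\int R|\nabla u|^2 G$ and $4\int uR\langle\nabla u,\nabla G\rangle$ appear directly. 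Collecting everything yields \eqref{eq S'(t)}.

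The only place requiring care is justifying the integration by parts on a complete (possibly noncompact) manifold, which is covered by the blanket growth/decay assumption stated just before Section 2. There is no obstacle of substance beyond bookkeeping; the slight surprise is that the cancellation of the $\int uRG\,\Delta u$ term forces one to integrate by parts against $R$ rather than against $u$, which is why the final formula lists $\Delta G$ rather than $\Delta u$.
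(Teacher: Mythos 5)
Your computation is correct and follows essentially the same route as the paper: differentiate under the integral using $u_t=Ru-\Delta u$, $\partial_t R=\Delta R+2|\Ric|^2$, $G_t=\Delta G$, $\partial_t d\mu=-R\,d\mu$, then move the Laplacian from $R$ onto $u^2G$ by self-adjointness, expand $\Delta(u^2G)$, and cancel the $\int uR\Delta u\,G$ terms. No gaps; the integration-by-parts justification is indeed covered by the paper's standing assumptions.
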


\begin{proof}
We compute using $\p_t R =\Delta R+2|\Ric|^2$ that
\begin{eqnarray*}
    S'(t) &=& \int_M 2uu_tRG dg+\int u^2 R_t G dg\\
    && +\int_M u^2 R G_t dg -\int u^2R^2 G dg\\
    &=& \int_M 2u(Ru-\Delta u)RG dg+\int u^2(\Delta R+2|\Ric|^2)G dg\\
    && +\int_M u^2 R\Delta G dg-\int u^2R^2 G dg\\
    &=& \int_M u^2 R^2 G dg- 2\int_M u\Delta u RG dg +\int u^2 \Delta R Gdg  \\
    && +2 \int_M u^2 |\Ric|^2 G dg+\int_M u^2 R\Delta G dg.
\end{eqnarray*}
Observing 
\begin{eqnarray*}
\int_M u^2 \Delta R G dg &=& \int_M R \Delta (u^2 G) dg\\
&=& \int_M  R\left( \Delta u^2 G +u^2 \Delta G +2\langle \n u^2, \n G \rangle \right)dg  \\
&=& 2\int_M u\Delta u RG dg+2\int_M |\n u|^2 RG dg\\
&& +\int_M u^2R \Delta G dg+ 4\int_M uR\langle \n u, \n G \rangle dg, 
\end{eqnarray*}
we deduce that
\begin{eqnarray*}
S'(t)  &=& \int_M u^2 R^2 G dg+2\int_M u^2 |\Ric|^2 G +2\int_M |\n u|^2 RG dg\\
    && +2\int_M u^2 R\Delta G dg+ 4\int_M uR\langle \n u, \n G \rangle dg.  
\end{eqnarray*}
\end{proof}

\begin{lemma}\label{lemI''t}
For $I(t)$ defined in \eqref{eq def I(t)}, we have 
\begin{eqnarray}\label{eq I''(t)}
I''(t) &=& 4 \int_M (\Ric-\n^2 f)(\n u, \n u) G dg \\ \nonumber 
&& +\int_M \left(2\Delta_f u -Ru \right)^2 G dg+2\int_M u^2 |\Ric|^2 G dg \\ \nonumber 
&& +2\int_M u^2 R\Delta G dg+4\int_M Ru\langle \n u, \n G \rangle dg. 
\end{eqnarray}
\end{lemma}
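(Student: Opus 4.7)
The plan is to differentiate the identity $I'(t) = 2D(t) + S(t)$ from \eqref{eq I derivative} once more in $t$ and then substitute the formulas for $D'(t)$ and $S'(t)$ that have already been established in \eqref{eq D'(t)} and \eqref{eq S'(t)}. All three integrands are smooth and have already been differentiated under the integral sign (accounting for both $\p_t u$ and $\p_t(dg(t))=-R\,dg(t)$) in the preceding lemmas, so no additional justification is needed at this step; the work is purely algebraic.

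First I would write
\begin{equation*}
I''(t) = 2D'(t) + S'(t)
\end{equation*}
and expand each summand. From $2D'(t)$ we obtain the contributions
\begin{equation*}
4\!\int_M (\Ric-\n^2 f)(\n u,\n u)\,G\,dg,\quad 4\!\int_M (\Delta_f u)^2 G\,dg,\quad -4\!\int_M Ru(\Delta_f u)G\,dg,\quad -2\!\int_M |\n u|^2 R\,G\,dg,
\end{equation*}
while from $S'(t)$ we pick up
\begin{equation*}
\int_M u^2 R^2 G\,dg,\quad 2\!\int_M u^2|\Ric|^2 G\,dg,\quad 2\!\int_M |\n u|^2 R\,G\,dg,\quad 2\!\int_M u^2 R\,\Delta G\,dg,\quad 4\!\int_M uR\langle\n u,\n G\rangle\,dg.
\end{equation*}

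The two terms $\pm 2\int |\n u|^2 R\,G\,dg$ cancel immediately. The key observation — and the only nonroutine step — is that the remaining three terms $4(\Delta_f u)^2$, $-4Ru(\Delta_f u)$, and $u^2 R^2$ combine to form a perfect square:
\begin{equation*}
4(\Delta_f u)^2 - 4Ru(\Delta_f u) + u^2 R^2 = (2\Delta_f u - Ru)^2.
\end{equation*}
This is the same completion-of-squares phenomenon that produces the natural drift Laplacian $\Delta_f$ in Perelman-type entropy computations, and here it repackages the heat-equation defect $2\Delta_f u - Ru$ cleanly. Once this identification is made, assembling the surviving terms yields exactly \eqref{eq I''(t)}.

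The main (and only) obstacle is spotting the square; the rest is bookkeeping. I would therefore present the proof as a one-line reduction $I''=2D'+S'$, list the cancellation of the $|\n u|^2 R$ pair, perform the completion of the square, and collect terms to obtain the stated formula.
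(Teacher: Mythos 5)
Your proposal is correct and follows exactly the paper's own argument: write $I''(t)=2D'(t)+S'(t)$, substitute \eqref{eq D'(t)} and \eqref{eq S'(t)}, cancel the two $\pm 2\int_M |\n u|^2 R\, G\, dg$ terms, and combine $4(\Delta_f u)^2-4Ru(\Delta_f u)+u^2R^2$ into $\left(2\Delta_f u-Ru\right)^2$. Nothing is missing.
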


\begin{proof}
By \eqref{eq I derivative}, we have $I'(t)=2D(t)+S(t)$. 
Using \eqref{eq D'(t)} and \eqref{eq S'(t)}, we calculate
\begin{eqnarray*}
I''(t) &=& 2D'(t) +S'(t) \\
&=& 4 \int_M (\Ric-\n^2 f)(\n u, \n u) G dg+4 \int_M (\Delta_f u)^2 G dg \\
&& -4\int_M Ru(\Delta_f u) G dg -2\int_M |\n u|^2 R G dg \\
&&+ \int_M u^2 R^2 G dg+2\int_M u^2 |\Ric|^2 G +2\int_M |\n u|^2 RG dg\\
&& +2 \int_M u^2 R\Delta Gdg  +4\int_M Ru\langle \n u, \n G \rangle dg \\
&=& 4 \int_M (\Ric-\n^2 f)(\n u, \n u) G dg +\int_M \left(2\Delta_f u -Ru \right)^2 G dg\\ 
&& +2\int_M u^2 |\Ric|^2 G dg +2\int_M u^2 R\Delta G dg+4\int_M Ru\langle \n u, \n G \rangle dg. 
\end{eqnarray*}
\end{proof}

\subsection{The nonnegative sectional curvature case}

We prove Theorem \ref{thm PF sec geq 0} and Corollary \ref{Corollary unique continuation} in this subsection.

\begin{proof}[Proof of Theorem \ref{thm PF sec geq 0}]
We need to estimate $I''(t)$ in \eqref{eq I''(t)} from below. 
Using Theorem \ref{thm matrix Harnack heat equation} and $\Ric \geq 0$, we get 
\begin{equation}\label{eq 1 sec}
\int_M (\Ric+\n^2 \log G)(\n u, \n u) G dg \geq -\frac{\k}{1-e^{-2\k t}} D(t). 
\end{equation}
By Corollary \ref{cor trace}, we have 
\begin{equation*}
\Delta G \geq |\n G|^2G^{-1} -\frac{n\k}{1-e^{-2\k t}} G. 
\end{equation*}
Since $R\geq 0$, we obtain
\begin{equation}\label{eq 2 sec}
\int_M u^2 R \Delta G \geq \int_M u^2 R |\n G|^2G^{-1}dg -\frac{n\k}{1-e^{-2\k t}}S(t).
\end{equation}
Noticing $0 \leq R\leq n\k$, we estimate that
\begin{eqnarray}\label{eq 3 sec}
&& 2 \int_M Ru\langle \n u, \n G \rangle dg \\ \nonumber
&\geq& -\int_M Ru^2 |\n G|^2G^{-1} dg -\int_M R |\n u|^2 G dg \\ \nonumber
&\geq& -\int_M Ru^2 |\n G|^2G^{-1} dg -n\k D(t)
\end{eqnarray}
Inserting the estimates \eqref{eq 1 sec}, \eqref{eq 2 sec}, and \eqref{eq 3 sec} into \eqref{eq I''(t)} yields
\begin{eqnarray}\label{eq I''(t) sec geq 0}
I''(t) \geq  \int_M \left(2\Delta_f u -Ru \right)^2 G dg-\left(\frac{2 \k}{1-e^{-2\k t}} +n\k \right)I'(t)
\end{eqnarray}
where we have used $S(t)\geq 0$ and \eqref{eq I derivative}. 

Using \eqref{eq I''(t) sec geq 0} and the Cauchy-Schwarz inequality, 
\begin{eqnarray*}
(I'(t))^2 &=& \left(\int_M (2\Delta_f u-Ru)G dg \right)^2 \\
&\leq&  \int_M u^2G dg \cdot  \int_M \left(2\Delta_f u -Ru \right)^2 G dg,
\end{eqnarray*}
we obtain that for $F(t):=(\log I(t))'$,
\begin{eqnarray*}
F'(t) &=& I(t)^{-2} \left( I''(t)-(I'(t))^2 \right) \\
&\geq & -\left(\frac{2 \k}{1-e^{-2\k t}} +n\k \right) F(t).
\end{eqnarray*}
Therefore, the quantity 
\begin{equation*}
e^{(n-2)\k t}(1-e^{-2\k t})F(t)
\end{equation*}
is monotone nondecreasing. 
\end{proof}

Next, we prove the unique continuation property. 
\begin{proof}[Proof of Corollary \ref{Corollary unique continuation}]

The key point to achieve unique continuation is that the correction factor in \eqref{eq PF F monotone sec geq 0} is asymptotic to $t$ as $t\to 0$. 

Suppose a solution $u=u(x,t)$ of the conjugate heat equation in $M \times [0, T)$ vanishes at infinity order at $(x_0, t_0) \in M\times (0,T)$. Since the quantity $e^{(n-2)\k t}(1-e^{-2\k t})F(t)$ is monotone nondecreasing on $[0,T]$, we have
\begin{equation*}
   (\log I(t))'=F(t) \geq F(T) e^{(n-2)\k (T-t)}\frac{1-e^{-2\k T}}{1-e^{-2\k t}} \geq \frac{C}{t}
\end{equation*}
for all $t\in (t_0,T)$, where $C=F(T)(1-e^{-2\k T})/(2\k)$. Hence, 
\begin{equation}\label{eq I(t) UC}
I(t) \geq  \left(\frac{t}{t_1} \right)^{C} I(t_1).
\end{equation}
The rest of the proof is standard since the heat kernel $G$ has Gaussian upper bound and the distance $d(x, x_0, t)$ are comparable in short time due to our assumption. This Gaussian bound and the assumption on the infinite vanishing order of $u$ at $(x_0, 0)$ implies, for all small $t>0$, 
\[
I(t) = \int_M u^2(x, t) G(x, x_0, t)dg(t)(x) \le C_N t^N
\]for any positive integer $N$, which is a contradiction to \eqref{eq I(t) UC} unless $u \equiv 0$.

\end{proof}

\subsection{The general case}

Since our assumption implies that $| Ric | \le c_n K$, taking $\alpha=2$ and $\rho=\infty$ in 
\cite[Theorem 2.7]{BCP10}, we have the following Li-Yau bound
\begin{equation}
    \frac{|\n G|^2}{G^2} - 2 \frac{G_t}{G} \leq c(n) \left(\frac{1}{t} + K \right),
\end{equation} 
where $c(n)$ is a dimensional constant. Then 
\begin{equation}
\label{ddg=gt>}
    \Delta G =G_t \geq \frac{|\n G|^2}{2 G} - \left( \frac{c(n)}{t} +K \right) G.
\end{equation} Note that this also follows from the matrix Harnack inequality in Section 4 after taking the trace.

Let $s_0 = \inf_{x \in M^n} R(x, 0)$. It is well known that $R(x, t) \ge s_0$ for all $t\in [0,T]$.
Since $R-s_0 \ge 0$ and 
\begin{eqnarray}\label{eq 5.17}
&&\int_M  u^2 R\Delta G dg + 2\int_M R u\langle \n u, \n G \rangle dg\\ \nonumber 
&=&\int_M  u^2 (R-s_0) \Delta G dg + \int_M  u^2 s_0 \Delta G dg+ 2\int_M R u\langle \n u, \n G \rangle dg\\ \nonumber
&=&\int_M  u^2 (R-s_0) \Delta G dg +  2\int_M (R - s_0) u\langle \n u, \n G \rangle dg,
\end{eqnarray} 
we can apply \eqref{ddg=gt>} on the right-hand hand side of \eqref{eq 5.10} to reach 
\begin{eqnarray*}
&& \int_M u^2 R\Delta G dg + 2\int_M R u\langle \n u, \n G \rangle dg\\
&\ge&  \int_M  u^2 (R-s_0) \left[ \frac{|\n G|^2}{2 G} - \left( \frac{c(n)}{t} +K \right) G \right] dg \\
&& + 2\int_M (R - s_0) u\langle \n u, \n G \rangle dg.
\end{eqnarray*}
Using  Cauchy-Schwarz inequality, we have 
\begin{eqnarray*}
&& 2 \int_M (R-s_0) u\langle \n u, \n G \rangle dg \\
&\geq&  - \frac{1}{2} \int (R-s_0) u^2 |\n \log G|^2 G dg 
 - 2 \int_M (R-s_0) |\n u|^2 G dg.
\end{eqnarray*} 
Hence, we obtain the estimate 
\begin{eqnarray*}
&& \int_M R  u^2 \Delta G dg 2 + \int_M R u\langle \n u, \n G \rangle dg \\
& \geq&  -\left( \frac{c(n)}{t} +K \right)  \int_M   (R-s_0) u^2 G  dg \\
&& - 2 \int_M (R-s_0) |\n u|^2 G dg.
\end{eqnarray*}

Using this inequality, the matrix Harnack inequality in Theorem \ref{thm matrix Harnack heat equation general case},
\begin{equation}\label{defc1t}
- \n^2 f \geq - c_1(t) g_{ij}, \quad c_1(t) := \frac{1}{2 t} + \frac{1}{t} \beta(t, n, K)
\end{equation} 
we deduce, from Lemma \ref{lemI''t}, that
\begin{eqnarray*}
I''(t) &\geq & \int_M \left( 2\Delta_fu -Ru\right)^2 G dg+2\int u^2 |\Ric|^2 G dg\\
&& -4(c_1(t) + c(n) K) \int_M |\n u|^2 G dg -4 \int_M (R-s_0) |\n u|^2 G dg \\
&& - 2 \left( \frac{c(n)}{t} +K \right) \int_M (R-s_0) u^2 G dg.
\end{eqnarray*} 
Here we have used $\Ric(\n u, \n u) \ge -c(n) K |\n u |^2$. 
Using the fact that $R-s_0 \le c(n) K$ and adjusting the dimensional constant $c(n)$, we deduce
\begin{eqnarray*}
I''(t) &\geq & \int_M \left( 2\Delta_fu -Ru\right)^2 G dg+2\int u^2 |\Ric|^2 G dg\\
&& -2(c_1(t) + c(n) K) \left[\int_M  2|\n u|^2 G dg +  \int_M  R u^2 G dg \right]\\
&& + (c_1(t) + c(n) K) \int_M  R u^2 G dg \\
&& - 2 \left( \frac{c(n)}{t} +K \right) \int_M (R-s_0) u^2 G dg.
\end{eqnarray*} 
Therefore
\begin{eqnarray*}
I''(t) &\geq & \int_M \left( 2\Delta_fu -Ru\right)^2 G dg+2\int u^2 |\Ric|^2 G dg\\
&& -2(c_1(t) + c(n) K) \left[\int_M  2|\n u|^2 G dg +  \int_M  R u^2 G dg \right]\\
&& - c_2(t) \int_M  u^2 G dg, 
\end{eqnarray*}
where 
\begin{equation}\label{defc2t}
c_2(t)= c(n) K \left[2 (c_1(t) + c(n) K) + 2 \left( \frac{c(n)}{t} +K \right) \right].
\end{equation} This implies that
\begin{eqnarray*}
&& I^2(t)(\log I(t))'' \\
&=& I''(t) I(t) -(I'(t))^2 \\
&\geq& - 2  (c_1(t) + c(n) K) I(t) \left(2 \int_M |\n u|^2 G dg  + \int_M  u^2G dg \right)\\
&& -  c_2(t) I^2(t)+ I(t) \int_M \left( 2\Delta_fu -Ru\right)^2 G dg-(2 D(t) + S(t))^2.
\end{eqnarray*} 
Using integration by parts, we see that
\[
2D(t) + S(t)=  \int_M (-2 \Delta_f u + R u) u G dg.
\]
Therefore the difference of the last two terms in the preceding inequality is non-negative by Cauchy-Schwarz inequality, giving us:
\begin{equation}
I^2(t)(\log I(t))'' \geq - 2 (c_1(t) + c(n) K)  I(t) \left[ 2 D(t) + S(t)  \right] - c_2(t) I^2(t).
\end{equation} Hence 
\begin{equation}
\label{log''>-c/t}
(\log I(t))'' \geq - 2 (c_1(t) + c(n) K)  \frac{ 2 D(t) + S(t)}{I(t)} - c_2(t).
\end{equation} Let us recall from \eqref{defc1t} and Theorem
\ref{thm matrix Harnack heat equation general case},
\begin{eqnarray}\label{defc1t2}
c_1(t) &:=& \frac{1}{2 t} + \frac{1}{t} \beta(t, n, K) \\ \nonumber
&=&  \frac{1}{2 t} + \frac{1}{t} \left[4 \sqrt{n K t}  +C_2 (K+1)t +C_1 \sqrt{K t}\,  \diam \right]
\end{eqnarray}
and from \eqref{defc2t}
\[
c_2(t)= c(n) K \left[2 (c_1(t) + c(n) K) + 2 \left( \frac{c(n)}{t} +K \right) \right].
\]

Let 
\[
Z_0 = \sup_{t \in (0, T]} [t c_2(t)].
\]
Inequality \eqref{log''>-c/t} infers for $F(t):=(\log I(t))'= \frac{ 2 D(t) + S(t)}{I(t)}$, 
\begin{equation}
\label{F't>-Ft}
F'(t) \ge -2(c_1(t) + c(n) K) F(t) - \frac{Z_0}{t}.
\end{equation}
Let 
\begin{equation}
\label{defppnk}
p = p(n, K, v_0, T, diam)= \sup_{t \in (0, T]} [t 2 (c_1(t) + c(n) K)]
\end{equation} From the preceding inequality, we see that 
\[
t F'(t) \ge -p  F(t) - Z_0.
\]Therefore 
\[
\left( t^p F(t) + \frac{Z_0}{p} t^p \right)' \ge 0
\]
Thus, we have proved Theorem \ref{thm PF general}. 

\subsection{The unweighted case}
For a solution $u(x,t)$ to the heat equation on $\R^n$, the monotonicity of the unweighted frequency 
\begin{equation*}
    \frac{\int_{\R^n} |\n u(x,t)|^2 dx}{\int_{\R^n} u^2(x,t) dx}
\end{equation*}
is equivalent to the log convexity of the energy $\int u^2 dx$. This is a classical result that can be used to prove the uniqueness of the backward heat equation (see for instance \cite{JohnPDEbook1982}). Here we extend this result to the conjugate heat equation coupled with the Ricci flow. 
Compared with the weighted case in this section, the curvature assumption is $\Ric \ge 0$ and no upper bound of any curvature is needed. 
The unweighted monotonicity, however, is not strong enough to prove the unique continuation property.

\begin{theorem}\label{thm PF no weight}
Let $(M^n,g(t))$, $t\in [0,T]$, be a compact Ricci flow. Let $u$ be a solution to the backward conjugate heat equation \eqref{backward conjugate heat equation}. Define 
$$I(t) = \int u^2(x,t) d\mu_{g(t)}.$$
If $(M^n,g(t))$ has nonnegative Ricci curvature, then 
$$(\log I(t))'' \ge 0.$$
\end{theorem}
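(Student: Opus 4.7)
The plan is to specialize the computations of Section 5.1 to the constant weight $G \equiv 1$, which trivially satisfies $G_t = \Delta G = 0$, so the derivations of the preceding lemmas apply verbatim with the advantage that every term containing $\n G$ or $\Delta G$ drops out. First, differentiating $I(t) = \int_M u^2\, d\mu_{g(t)}$ using $\p_t d\mu_{g(t)} = -R\, d\mu_{g(t)}$ and $u_t = Ru - \Delta u$, then integrating by parts on the compact manifold, gives
$$I'(t) = 2\int_M |\n u|^2\, d\mu_{g(t)} + \int_M u^2 R\, d\mu_{g(t)} = \int_M u\,(Ru - 2\Delta u)\, d\mu_{g(t)}.$$
The second form sets up the eventual application of the Cauchy--Schwarz inequality.

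Next, I would apply the formula of Lemma \ref{lemI''t} with $G \equiv 1$, so $f = -\log G \equiv 0$ and $\Delta_f = \Delta$. The two trailing terms $2\int u^2 R\Delta G\, d\mu$ and $4\int Ru\langle \n u, \n G\rangle\, d\mu$ vanish identically, yielding the clean expression
$$I''(t) = 4\int_M \Ric(\n u, \n u)\, d\mu_{g(t)} + \int_M (2\Delta u - Ru)^2\, d\mu_{g(t)} + 2\int_M u^2 |\Ric|^2\, d\mu_{g(t)}.$$
Under the standing hypothesis $\Ric \geq 0$, both $\Ric(\n u, \n u)$ and $|\Ric|^2$ contribute nonnegatively, yielding the key estimate
$$I''(t) \geq \int_M (2\Delta u - Ru)^2\, d\mu_{g(t)}.$$

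Finally, by Cauchy--Schwarz in $L^2(d\mu_{g(t)})$,
$$\bigl(I'(t)\bigr)^2 = \left(\int_M u\,(Ru - 2\Delta u)\, d\mu_{g(t)}\right)^2 \leq I(t)\int_M (2\Delta u - Ru)^2\, d\mu_{g(t)} \leq I(t)\,I''(t),$$
from which $(\log I)'' = I''/I - (I'/I)^2 \geq 0$ follows at once. There is essentially no analytic obstacle in this case: because the weight is trivial, the terms involving $\n G$ and $\Delta G$ that forced the use of the matrix Harnack estimate and heat-kernel Li--Yau bounds in the proof of Theorem \ref{thm PF general} simply disappear. Consequently $\Ric \geq 0$ alone suffices to absorb all curvature contributions in $I''(t)$, and no upper bound on any curvature quantity is required, in line with the classical $\R^n$ case where log-convexity of the $L^2$ energy follows purely from Cauchy--Schwarz.
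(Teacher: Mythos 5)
Your proof is correct and follows essentially the same route as the paper: the same formula for $I''(t)$, dropping the nonnegative terms $4\int\Ric(\n u,\n u)\,d\mu$ and $2\int u^2|\Ric|^2\,d\mu$, and the Cauchy--Schwarz inequality applied to $I'(t)=\int_M u\,(Ru-2\Delta u)\,d\mu_{g(t)}$. The only cosmetic difference is that you obtain $I''(t)$ by specializing Lemma \ref{lemI''t} to the constant weight $G\equiv 1$ (legitimate, since a constant solves $\p_t G=\Delta_{g(t)}G$ and $f=-\log G\equiv 0$), whereas the paper redoes the unweighted computation of $D'(t)$ and $S'(t)$ directly.
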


\begin{proof}
As before, we also define
\begin{eqnarray*}
D(t) &=& \int |\nabla u(x,t)|^2 d\mu_{g(t)}, \\
S(t) &=& \int u^2(x,t) R(x,t) d\mu_{g(t)},
\end{eqnarray*}
and write $I(t)=\int_M u^2 dg$ for short and similar notations for other integrals. 

By direct computation as for the weighted case, we have
\begin{eqnarray*}
I'(t) &=&  2 \int |\nabla u|^2dg +\int u^2 R dg=2D(t)+S(t),
\end{eqnarray*}
\begin{eqnarray*}
D'(t) &=& 2\int \langle \nabla u, \nabla u_t \rangle dg +2\int \Ric(\nabla u, \nabla u)dg - \int |\nabla u|^2 R dg \\
&=& -2\int u_t \Delta u dg+2\int \Ric(\nabla u, \nabla u)dg  - \int |\nabla u|^2 R dg,
\end{eqnarray*}
and 
\begin{eqnarray*}
S'(t) &=&  2 \int uu_t R dg+\int u^2 (\Delta R +2|\Ric|^2) dg-\int u^2 R^2 dg \\
&=&  2 \int uu_t R dg+\int R(2u\Delta u  +2|\nabla u|^2) \\
&& +2\int u^2|\Ric|^2 dg-\int u^2 R^2 dg.\\
\end{eqnarray*}
Using $\Delta u=Ru-u_t$, we deduce
\begin{eqnarray*}
I''(t) &=& 2D'(t)+S'(t) \\
&=&  -4\int u_t \Delta u dg+4\int \Ric(\nabla u, \nabla u) dg \\
&& +2\int Ru \Delta u dg +2 \int uu_t R +2\int u^2|\Ric|^2 dg -\int u^2 R^2 dg\\
&=& 4\int u_t^2 dg-4\int uu_t R dg+\int u^2R^2 dg \\
&& +4\int \Ric(\nabla u, \nabla u) dg+2\int u^2|\Ric|^2 dg \\
&=& \int (2u_t-uR)^2 dg +4\int \Ric(\nabla u, \nabla u) dg+2\int u^2|\Ric|^2 dg.
\end{eqnarray*}
Using $I'(t)=\int u(2u_t-uR)$, we then get
\begin{eqnarray*}\label{eq 5.25}
&& I^2 (\log I)'' \\ \nonumber
&=&  \int u^2 dg\cdot \int (2u_t -uR)^2 dg- \left(\int u(2u_t-uR) dg\right)^2 \\ \nonumber
&& +\int u^2 dg\left( 4\int \Ric(\nabla u, \nabla u) dg 
 +2\int u^2|\Ric|^2  dg\right)
\end{eqnarray*}
The first line on the right-hand side of the above equation is nonnegative by the Cauchy-Schwarz inequality and the second line is nonnegative since $\Ric \geq 0$. Therefore, we have proved the log convexity of the energy $I(t)$. 
\end{proof}
\section{Matrix Harnack for the Conjugate Heat Equation}
 

In this section, we prove Theorem \ref{thm matrix Harnack backward conjugate heat equation}.
Let's first recall the Harnack estimate for the Ricci flow since it will be used in the proof. 
\begin{proposition}
Let $(M^n,g(t))$, $t\in (0,T)$, be a complete solution to the Ricci flow with bounded nonnegative complex sectional curvature. Define 
\begin{equation}\label{eq def M_ij}
    M_{ij} := \Delta R_{ij} -\tfrac{1}{2}\n_i\n_j R +2R_{ikjl}R_{kl} -R_{ik}R_{jk} +\tfrac{1}{2t}R_{ij},
\end{equation}
and 
\begin{equation}\label{eq def P_kij}
    P_{kij} := \n_k R_{ij} -\n_i R_{kj}. 
\end{equation}
Then we have
\begin{equation}\label{eq Harnack}
    M(w,w)+2P(v,w,w)+\Rm(v,w,v,w) \geq 0
\end{equation}
for all $(x,t) \in M \times (0,T)$ and all vectors $v,w \in T_xM$. 
\end{proposition}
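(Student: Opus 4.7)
The plan is to prove this by Hamilton's tensor maximum principle applied to the quadratic form
\[
Z(v,w) := M_{ij}w^iw^j + 2P_{kij}v^kw^iw^j + R_{ikjl}v^kv^lw^iw^j
\]
viewed as a function on the total space $TM \oplus TM$ over the spacetime $M \times (0,T)$. This is Brendle's route, which extends Hamilton's original argument (needing $\Rm \ge 0$ as an operator) to the weaker setting of nonnegative complex sectional curvature, precisely because the pointwise algebraic reaction term happens to lie in that same curvature cone.

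First, I would derive the parabolic evolution equation satisfied by $Z$. Starting from the well-known evolutions of $R_{ij}$, $R_{ijkl}$, and $\nabla_k R_{ij}$ under Ricci flow, and using the commutation formulas between $(\partial_t - \Delta)$ and covariant derivatives in the spirit of Section~2, one obtains schematically
\begin{align*}
(\partial_t - \Delta)M_{ij} &= 2 R_{ikjl} M_{kl} + 2 P_{kli}P_{klj} + \tfrac{1}{t}\text{(lower-order)}, \\
(\partial_t - \Delta)P_{kij} &= R_{klmn}\text{-linear contractions},
\end{align*}
which together with $\partial_t R_{ikjl}$ produce a combined evolution
\[
(\partial_t - \Delta)Z(v,w) = \mathcal{N}(v,w) + 2\nabla_m Z \cdot (\text{transport})
\]
after freezing $v,w$ as parallel fields in normal coordinates and regarding $v$ as time-independent.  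Here $\mathcal{N}(v,w)$ is a sum-of-squares piece plus pure curvature reaction terms, and the $\tfrac{1}{2t}R_{ij}$ offset in $M_{ij}$ is what cancels the otherwise troublesome $-\tfrac{1}{t}$ terms coming from dimension counting.

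Second, I would run the tensor maximum principle. Assume for contradiction that $Z$ first vanishes at $(x_0,t_0)$ on some pair $(v_0,w_0)$ with $|w_0|=1$.  Because $Z$ is quadratic in $v$, one may optimize in $v$ first: the critical equation for $v_0$ recovers Hamilton's optimal choice $v_0 = -\Rm(w_0,\cdot,w_0,\cdot)^{-1}P(\cdot, w_0, w_0)$ on the appropriate subspace, and at $(v_0,w_0)$ the derivative terms in $\mathcal{N}$ reorganize into the Cao--Hamilton--Brendle normal form. Then the first-order spatial/temporal conditions at $(x_0,t_0)$ imply $(\partial_t-\Delta)Z(v_0,w_0)\le 0$, and one obtains a contradiction provided we can show $\mathcal{N}(v_0,w_0)\ge 0$. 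The initial condition is automatic: as $t\to 0^+$ the $\tfrac{1}{2t}R_{ij}w^iw^j$ term dominates, and under nonnegative complex sectional curvature $R_{ij}\ge 0$, so $Z>0$ for $t$ sufficiently small relative to fixed bounds on $v$.

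The main obstacle is the algebraic verification $\mathcal{N}(v_0,w_0)\ge 0$. This is Brendle's key insight: after the optimization in $v$, the reaction term decomposes as a sum of expressions of the form $R_{i\bar j k \bar l}\zeta^i\bar\zeta^j\omega^k\bar\omega^l$ for complex vectors $\zeta,\omega$ built algebraically from $v_0$, $w_0$, and $\nabla R$, and the nonnegativity of complex sectional curvature is exactly the statement that each such term is $\ge 0$. Preservation of this cone along Ricci flow (Brendle--Schoen, Ni--Wolfson) ensures the hypothesis holds throughout $(0,T)$, so the maximum principle closes. The hard part is carrying out the completion-of-squares/complexification bookkeeping so that only the complex sectional curvature (rather than full $\Rm\ge 0$) is invoked.
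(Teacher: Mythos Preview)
The paper does not prove this proposition at all: it simply records that this is Hamilton's trace Harnack estimate under the weakened curvature hypothesis, citing Brendle~\cite{Brendle09} for the nonnegative-complex-sectional-curvature version and noting the Ni--Wolfson equivalence with $M\times\R^2$ having nonnegative isotropic curvature. So your proposal goes well beyond what the paper does; you are sketching the actual content of Brendle's argument, whereas the authors treat the inequality as a black box imported from the literature.

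As a sketch of Brendle's proof your outline is broadly on target---define the Harnack quadratic $Z$, compute its heat-type evolution, and verify via the tensor maximum principle that the reaction term is nonnegative precisely under nonnegative complex sectional curvature. Two small points to watch if you ever flesh it out: the ``initial condition'' step is more delicate than saying the $\tfrac{1}{2t}R_{ij}w^iw^j$ term dominates, since $R_{ij}w^iw^j$ can vanish (Hamilton and Brendle handle this by a perturbation or by working on $[\varepsilon,T)$ and letting $\varepsilon\to 0$); and the preservation of nonnegative complex sectional curvature is already built into the hypothesis here, so invoking Brendle--Schoen/Ni--Wolfson is not strictly needed for the statement as formulated.
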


\begin{proof}
The Harnack estimate for the Ricci flow was originally proved by Hamilton \cite{Hamilton93JDG} under the nonnegative curvature operator condition. This version stated here is a generalization due to Brendle \cite{Brendle09}. Notice that $M$ has nonnegative complex sectional curvature if and only $M\times \R^2$ has nonnegative isotropic curvature, which is an observation of Ni and Wolfson \cite{NWolfson07}.
\end{proof}

The next step is to derive an evolution inequality for 
\begin{equation}\label{eq def Z_ij}
    Z_{ij}:=R_{ij} -\n_i\n_j \log u -\eta(t) g_{ij}.
\end{equation}
\begin{proposition}\label{prop evolution Z_ij}
Let $(M^n,g(t))$, $u$, and $\eta(t)$ be the same as in Theorem \ref{thm matrix Harnack backward conjugate heat equation}. Then $Z_{ij}$ defined in \eqref{eq def Z_ij} satisfies 
\begin{equation}\label{eq Z_ij estimate}
\tfrac{1}{2}(\p_t+\Delta)Z_{ij} 
\geq  Z^2_{ij}-R_{ikjl}Z_{kl}-\tfrac{1}{2}R_{ik}Z_{jk}-\tfrac{1}{2}R_{jk}Z_{ik}+2\eta Z_{ij}. 
\end{equation}
\end{proposition}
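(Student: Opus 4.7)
The plan is to directly compute $(\partial_t+\Delta)Z_{ij}$, rewrite every occurrence of $H_{ij}=\nabla_i\nabla_j\log u$ in terms of $Z_{ij},R_{ij},\eta$ via $H_{ij}=R_{ij}-Z_{ij}-\eta g_{ij}$, and then recognize the Hamilton--Brendle Harnack tensor on the right-hand side so that Brendle's Harnack inequality \eqref{eq Harnack} eliminates the pure curvature contribution at the cost of only a manageable $-\frac{1}{t}R_{ij}$ term. First I would collect the three ingredients: (i) the Ricci flow evolution $\partial_t R_{ij}=\Delta R_{ij}+2R_{ikjl}R_{kl}-2R_{ik}R_{jk}$, giving $(\partial_t+\Delta)R_{ij}=2\Delta R_{ij}+2R_{ikjl}R_{kl}-2R_{ik}R_{jk}$; (ii) Proposition 2.1 specialized to $\varepsilon=-1,\delta=1$, which yields an expression for $(\partial_t+\Delta)H_{ij}$ involving $\nabla_i\nabla_j R$, the terms $H^2_{ij}$, $R_{ikjl}\nabla_k v\nabla_l v$, the ``Lichnerowicz'' curvature correction, plus the derivative-of-Ricci term $2(\nabla_i R_{jk}+\nabla_j R_{ik}-\nabla_k R_{ij})\nabla_k v$ (here $v=\log u$); and (iii) $(\partial_t+\Delta)(\eta g_{ij})=\eta' g_{ij}-2\eta R_{ij}$ from $\partial_t g_{ij}=-2R_{ij}$.

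Next, I would substitute $H_{ij}=R_{ij}-Z_{ij}-\eta g_{ij}$ everywhere on the right-hand side. Expanding $H^2_{ij}$, $R_{ikjl}H_{kl}$, and $R_{ik}H_{jk}+R_{jk}H_{ik}$ produces, after careful cancellation, the $Z$-quadratic/cross piece
\[
2Z^2_{ij}-2R_{ikjl}Z_{kl}-R_{ik}Z_{jk}-R_{jk}Z_{ik}+4\eta Z_{ij},
\]
together with a purely curvature block $2\Delta R_{ij}-\nabla_i\nabla_j R+4R_{ikjl}R_{kl}-2R_{ik}R_{jk}$, a pure-$\eta$ block $2\eta^2 g_{ij}-\eta' g_{ij}-2\eta R_{ij}$, and gradient-in-$v$ terms. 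The crucial observation is that the curvature block equals exactly $2M_{ij}-\tfrac{1}{t}R_{ij}$ for $M_{ij}$ from \eqref{eq def M_ij}, while after using $\nabla_k H_{ij}=\nabla_k R_{ij}-\nabla_k Z_{ij}$ the $v$-gradient terms regroup as
\[
2R_{ikjl}\nabla_k v\nabla_l v+2P_{kij}\nabla_k v+2P_{kji}\nabla_k v-2\nabla_k Z_{ij}\nabla_k v,
\]
with $P_{kij}$ as in \eqref{eq def P_kij}. The first three of these combine with $2M_{ij}$ to form precisely twice the symmetric tensor in \eqref{eq Harnack} evaluated with $w=\nabla v$, which is nonnegative under the nonnegative complex sectional curvature assumption. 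This is the step I expect to be the main obstacle: matching all the combinatorial coefficients so that the Harnack tensor emerges cleanly, and being careful that both $P_{kij}\nabla_k v$ and $P_{kji}\nabla_k v$ contribute with the symmetrized coefficient required by the matrix Harnack.

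Once the Harnack tensor is neutralized, what remains on the right-hand side is
\[
-\tfrac{1}{t}R_{ij}+2Z^2_{ij}-2R_{ikjl}Z_{kl}-R_{ik}Z_{jk}-R_{jk}Z_{ik}+4\eta Z_{ij}+(2\eta^2-\eta')g_{ij}-2\eta R_{ij}-2\nabla_k Z_{ij}\nabla_k v.
\]
Using the assumption $\Ric\leq\kappa g$ we bound $-\tfrac{1}{t}R_{ij}\geq-\tfrac{\kappa}{t}g_{ij}$ and $-2\eta R_{ij}\geq-2\kappa\eta g_{ij}$, and then the hypothesized ODI $\eta'\leq2\eta^2-2\kappa\eta-\tfrac{\kappa}{t}$ is precisely what is needed to absorb the two negative $g_{ij}$ contributions: $(2\eta^2-\eta')g_{ij}-2\kappa\eta g_{ij}-\tfrac{\kappa}{t}g_{ij}\geq0$. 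Dividing by $2$ delivers \eqref{eq Z_ij estimate} up to the first-order transport term $-\nabla_k Z_{ij}\nabla_k v$, which is harmless at a point where the tensor $Z_{ij}$ is extremized and thus does not affect subsequent applications of the tensor maximum principle. This completes the plan.
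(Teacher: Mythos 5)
Your computation reproduces the paper's proof essentially verbatim: the same combination of the evolution equations for $R_{ij}$, for $H_{ij}$ (Proposition 2.1 with $\ve=-1$, $\delta=1$), and for $\eta g_{ij}$, the same substitution $H_{ij}=R_{ij}-Z_{ij}-\eta g_{ij}$ yielding the $Z$-quadratic block plus the curvature block $2M_{ij}-\tfrac{1}{t}R_{ij}$, the same recognition that $M_{ij}+(P_{kij}+P_{kji})\nabla_k v+R_{ikjl}\nabla_k v\nabla_l v\geq 0$ by Brendle's Harnack inequality (only note that $\nabla\log u$ plays the role of the Harnack vector $v$ with $w$ the arbitrary contraction direction, not ``$w=\nabla v$'' as you wrote), and the same use of the ODI for $\eta$ together with $\Ric\leq\kappa g$ to absorb the remaining $g_{ij}$-terms. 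The transport term $-\nabla_k Z_{ij}\nabla_k v$ you retain also appears in the paper's intermediate identity \eqref{eq evolution Z_ij conjugate} and is likewise immaterial at the maximum-principle stage, so there is no genuine discrepancy.
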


\begin{proof}[Proof of Proposition \ref{prop evolution Z_ij}]
For simplicity, we write $v=\log u$ and
\begin{equation*}
    H_{ij} = \n_i \n_j \log u.
\end{equation*}
Then the calculations in Section 2 with $\ve=-1$ and $\delta =1$ apply to this setting and we obtain from \eqref{eq evolution H_ij} that
\begin{eqnarray}\label{eq evolution H_ij conjugate}
 (\p_t+\Delta)H_{ij} &=&  \n_i\n_j R - (2R_{ikjl}H_{kl}-R_{ik}H_{jk}-R_{jk}H_{ik}) \\ \nonumber
&& - 2 \left(H^2_{ij} +R_{ikjl}\n_k v \n_l v +\n_k H_{ij} \n_k v \right)
\\ \nonumber
&& +2 (\n_iR_{jk}+\n_jR_{ik}-\n_kR_{ij})\n_k v.  
\end{eqnarray}
Under the Ricci flow, we have (see \cite[page 112]{CLN})
\begin{equation*}
    (\p_t - \Delta) R_{ij} =2R_{ikjl}R_{kl}-2R_{ik}R_{jk}.
\end{equation*}
Hence, 
\begin{equation}\label{eq evolution R_ij conjugate}
    (\p_t + \Delta) R_{ij} =2\Delta R_{ij} + 2R_{ikjl}R_{kl}-2R_{ik}R_{jk}. 
\end{equation}
We also notice that
\begin{equation}\label{eq evolution cg_ij conjugate}
    (\p_t + \Delta) (\eta(t)g_{ij}) = \eta'(t)g_{ij} -2\eta(t)R_{ij}.
\end{equation}
Combining \eqref{eq evolution H_ij conjugate}, \eqref{eq evolution R_ij conjugate}, and \eqref{eq evolution cg_ij conjugate} together, we obtain that 
\begin{eqnarray*}
&& \tfrac{1}{2}(\p_t+\Delta)Z_{ij} \\
&=& \tfrac{1}{2} (\p_t+\Delta)R_{ij}-\tfrac{1}{2}(\p_t+\Delta )H_{ij}-\tfrac{1}{2}(\p_t+\Delta )(\eta(t)g_{ij}) \\
&=& \Delta R_{ij} +R_{ikjl}R_{kl}-R_{ik}R_{jk} -\tfrac{1}{2}\n_i\n_j R + H^2_{ij} \\
&&   +R_{ikjl}H_{kl}-\tfrac{1}{2}R_{ik}H_{jk}-\tfrac{1}{2}R_{jk}H_{ik} +R_{ikjl}\n_k v \n_l v \\
&& -\n_k (Z_{ij}-R_{ij})\n_k v - (\n_iR_{jk}+\n_jR_{ik}-\n_kR_{ij})\n_k v \\
&& -\tfrac{\eta'(t)}{2}g_{ij} +\eta(t)R_{ij} \\
&=& \Delta R_{ij} -\tfrac{1}{2} \n_i \n_j R +2R_{ikjl}R_{kl}-R_{ik}R_{jk} +R_{ikjl}\n_k v \n_l v \\
&& + H^2_{ij} -R_{ikjl}R_{kl} + R_{ikjl}H_{kl}-\tfrac{1}{2}R_{ik}H_{jk}-\tfrac{1}{2}R_{jk}H_{ik}\\
&&  -\n_k Z_{ij}\n_k v  - (\n_iR_{jk}+\n_jR_{ik}-2\n_kR_{ij})\n_k v \\
&& -\tfrac{\eta'(t)}{2}g_{ij} +\eta(t)R_{ij}.
\end{eqnarray*}
Using \eqref{eq def M_ij}, \eqref{eq def P_kij}, and
\begin{eqnarray*}
&& -(\n_iR_{jk}+\n_jR_{ik}-2\n_kR_{ij})\n_k v \\
&=& (\n_k R_{ij} -\n_i R_{jk})\n_k v +(\n_k R_{ij} -\n_j R_{ik})\n_k v \\
&=& (P_{kij}+P_{kji})\n_k v,
\end{eqnarray*}
we get 
\begin{eqnarray}\label{eq 5.10}
&& \tfrac{1}{2}(\p_t+\Delta)Z_{ij} \\ \nonumber
&=& M_{ij}-\tfrac{1}{2t}R_{ij}+(P_{kij}+P_{kji})\n_k v+ R_{ikjl}\n_k v \n_l v -\n_k Z_{ij}\n_k v \\ \nonumber
&& +H^2_{ij}-R_{ikjl}R_{kl} + R_{ikjl}H_{kl} +\tfrac{1}{2}R_{ik}H_{jk}+\tfrac{1}{2}R_{jk}H_{ik} \\ \nonumber && -\tfrac{\eta'(t)}{2}g_{ij} +\eta(t)R_{ij}.
\end{eqnarray}
Next, we compute using \eqref{eq def Z_ij} that
\begin{eqnarray}\label{eq 5.11}
&& H^2_{ij}-R_{ikjl}R_{kl} + R_{ikjl}H_{kl} +\tfrac{1}{2}R_{ik}H_{jk}+\tfrac{1}{2}R_{jk}H_{ik} \\ \nonumber
&=& (R_{ik}-Z_{ik}-\eta g_{ik})(R_{jk}-Z_{jk}- \eta g_{jk}) -R_{ikjl}(Z_{kl}+ \eta g_{kl}) \\ \nonumber
&& -\tfrac{1}{2}R_{ik}(R_{jk}-Z_{jk}-\eta g_{jk})-\tfrac{1}{2}R_{jk}(R_{ik}-Z_{ik}-\eta g_{jk}) \\ \nonumber
&=& Z^2_{ij}-R_{ikjl}Z_{kl}-\tfrac{1}{2}R_{ik}Z_{jk}-\tfrac{1}{2}R_{jk}Z_{ik}+2\eta Z_{ij}+\eta ^2g_{ij}-2\eta R_{ij}.
\end{eqnarray}
Substituting \eqref{eq 5.11} into \eqref{eq 5.10} produces
\begin{eqnarray}\label{eq evolution Z_ij conjugate}
&& \tfrac{1}{2}(\p_t+\Delta)Z_{ij} \\ \nonumber 
&=& M_{ij}+(P_{kij}+P_{kji})\n_k v+ R_{ikjl}\n_k v \n_l v -\n_k Z_{ij}\n_k v \\ \nonumber
&& +Z^2_{ij}-R_{ikjl}Z_{kl}-\tfrac{1}{2}R_{ik}Z_{jk}-\tfrac{1}{2}R_{jk}Z_{ik}+2cZ_{ij}\\  \nonumber
&& +\eta^2g_{ij}-\eta R_{ij}-\tfrac{\eta'(t)}{2}g_{ij}-\tfrac{1}{2t}R_{ij},
\end{eqnarray}
Using \eqref{eq c(t) ODE} and $\Ric \leq \kappa g$, we have 
\begin{eqnarray}\label{eq estimate c}
 && \eta^2g_{ij}-\eta R_{ij}-\tfrac{\eta'(t)}{2}g_{ij}-\tfrac{1}{2t}R_{ij} \\ \nonumber
&\geq&   \tfrac{1}{2t}(\kappa g_{ij}-R_{ij}) +\eta(\kappa g_{ij}-R_{ij}) .
\end{eqnarray}
In view of \eqref{eq Harnack} and \eqref{eq estimate c}, we conclude that 
\begin{equation*}
\tfrac{1}{2}(\p_t+\Delta)Z_{ij} 
\geq  Z^2_{ij}-R_{ikjl}Z_{kl}-\tfrac{1}{2}R_{ik}Z_{jk}-\tfrac{1}{2}R_{jk}Z_{ik}+2\eta Z_{ij}. 
\end{equation*}
Hence, \eqref{eq Z_ij estimate} is proved.
\end{proof}

We are ready to prove Theorem \ref{thm matrix Harnack backward conjugate heat equation}.
\begin{proof}[Proof of Theorem \ref{thm matrix Harnack backward conjugate heat equation}]
By Proposition \ref{prop evolution Z_ij}, 
\begin{equation}\label{eq 5.12}
\tfrac{1}{2}(\p_t+\Delta)Z_{ij} 
\geq  Z^2_{ij}-R_{ikjl}Z_{kl}-\tfrac{1}{2}R_{ik}Z_{jk}-\tfrac{1}{2}R_{jk}Z_{ik}+2\eta Z_{ij}. 
\end{equation}

If $(M, g(t))$ is compact, it follow from $\eta(t)\to \infty$ as $t\to T$ that $Z_{ij} \leq 0$ as $t \to T^-$. 
Noticing that each term on the right-hand side of \eqref{eq 5.12}
satisfies the null-eigenvector condition, we conclude using Hamilton's tensor maximum principle (see \cite{Hamilton86} or \cite[Theorem 3.3]{CLN}) that $Z_{ij} \leq 0$ on $M\times (0,T)$. 

When $(M, g(t))$ is complete noncompact, one can proceed as in subsection 3.2 and use the maximum principle (see \cite[Theorem 12.22]{Chowbookpart2} to prove the estimate. We omit the technical details here.
\end{proof}

Next, we prove Corollary \ref{corollary matrix Harnack conjugate}. 
\begin{proof}[Proof of Corollary \ref{corollary matrix Harnack conjugate}]
Note that the function
\begin{equation*}
   \eta_0(t)=\frac{\k}{1-e^{-2\k(T-t)}} +\sqrt{\frac{\k}{2t}}
\end{equation*}
satisfies $\eta_0' \leq 2\eta_0^2 -2\k \eta_0 -\frac{\k}{t}$ and $\eta_0(t)\to \infty$ as $t\to T$. 
Hence, the inequality \eqref{eq matrix estimate KC geq 0} follows from choosing $\eta=\eta_0$ in \eqref{eq matrix estimate KC geq 0 eta}.
\end{proof}

Below we present an improvement of Theorem \ref{thm matrix Harnack backward conjugate heat equation} when the Ricci flow is ancient. 
\begin{theorem}\label{thm matrix LYH ancient}
Let $(M^n,g(t))$, $t\in (-\infty,T)$, be a compact ancient solution to the Ricci flow with bounded nonnegative complex sectional curvature. Let $u: M \times [t_0, T] \to (0,\infty)$, $-\infty \leq t_0<T$, be a positive solution to the backward conjugate heat equation $u_t+\Delta_{g(t)} u=Ru$. Suppose that $\Ric(x,t) \leq \k(x,t) g$ for some $\k>0$ and for all $(x,t)\in M\times [t_0,T]$. 
Then we have
\begin{equation*}
\Ric -\n^2 \log u -\frac{\kappa}{1-e^{-2\kappa(T-t)}} g \leq 0,
\end{equation*}
for all $(x,t) \in M \times (t_0,T)$. 
\end{theorem}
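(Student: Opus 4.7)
The plan is to follow the scheme already developed for Theorem \ref{thm matrix Harnack backward conjugate heat equation} (Proposition \ref{prop evolution Z_ij} plus the tensor maximum principle), with one key input upgraded: for an ancient Ricci flow with bounded nonnegative complex sectional curvature, Hamilton's trace Harnack (in Brendle's form) holds in its time-translation invariant version
\begin{equation*}
M^{\infty}(w,w) + 2P(v,w,w) + \Rm(v,w,v,w) \geq 0,
\end{equation*}
where $M^{\infty}_{ij} := \Delta R_{ij} - \tfrac{1}{2}\nabla_i\nabla_j R + 2R_{ikjl}R_{kl} - R_{ik}R_{jk}$, i.e., without the $\tfrac{1}{2t}R_{ij}$ term present in \eqref{eq def M_ij}. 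This improved Harnack follows by applying the Harnack estimate of Brendle on the time-shifted flow $g_s(t):=g(t-s)$ defined on $(s-\infty, T)$ and letting $s\to -\infty$, since the $\tfrac{1}{2(t-s)}R_{ij}$ correction tends to zero.

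With this improved Harnack in hand, I redo the computation leading to \eqref{eq evolution Z_ij conjugate} for
\begin{equation*}
Z_{ij} := R_{ij} - \nabla_i\nabla_j \log u - \eta(t) g_{ij},
\end{equation*}
and the $-\tfrac{1}{2t}R_{ij}$ term disappears from the right-hand side. The residual curvature/$\eta$ terms become
\begin{equation*}
\eta^2 g_{ij} - \eta R_{ij} - \tfrac{\eta'}{2}g_{ij},
\end{equation*}
which is nonnegative (modulo a multiple of $\kappa g_{ij} - R_{ij} \geq 0$) as soon as $\eta$ satisfies the cleaner ODI
\begin{equation*}
\eta' \leq 2\eta^2 - 2\kappa \eta.
\end{equation*}
This yields the evolution inequality
\begin{equation*}
\tfrac{1}{2}(\partial_t + \Delta) Z_{ij} \geq Z_{ij}^2 - R_{ikjl} Z_{kl} - \tfrac{1}{2} R_{ik} Z_{jk} - \tfrac{1}{2} R_{jk} Z_{ik} + 2\eta Z_{ij} + \nabla_k Z_{ij}\nabla_k \log u,
\end{equation*}
whose right-hand side satisfies the null-eigenvector condition for an upper barrier on $Z_{ij}$.

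Next I would verify that $\eta(t) = \kappa/(1 - e^{-2\kappa(T-t)})$ solves $\eta' = 2\eta^2 - 2\kappa \eta$ with equality (a direct computation setting $s = T-t$) and that $\eta(t) \to \infty$ as $t \to T^-$. Hence $Z_{ij} \leq 0$ near $t = T$, and Hamilton's tensor maximum principle (in the compact case, as in the proof of Theorem \ref{thm matrix Harnack backward conjugate heat equation}) propagates $Z_{ij} \leq 0$ backwards to all $t \in (t_0, T)$, giving the claimed estimate.

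The main obstacle is justifying the vanishing of the $\tfrac{1}{2t}R_{ij}$ correction term for ancient flows, since Proposition \ref{prop evolution Z_ij} as stated inherits it directly from \eqref{eq def M_ij}. Once the time-translated Harnack is established in the bounded-nonnegative-complex-sectional-curvature ancient setting (which is where Brendle's preservation result and Ni--Wolfson's reformulation into nonnegative isotropic curvature on $M\times\mathbb{R}^2$ are crucial), the rest is a direct adaptation of the earlier proof, and the sharper closed-form $\eta$ drops out naturally from the autonomous ODE.
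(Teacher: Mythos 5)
Your proposal is correct and follows essentially the same route as the paper: the paper likewise reduces to the proof of Theorem \ref{thm matrix Harnack backward conjugate heat equation}, replacing \eqref{eq Harnack} by the time-translation-invariant Harnack on ancient flows (equivalently, $M(w,w)+2P(v,w,w)+\Rm(v,w,v,w)\geq \tfrac{1}{2t}\Ric(w,w)$, which the paper cites from Hamilton and Brendle rather than re-deriving via your time-shift limit), so that the ODI becomes $\eta'\leq 2\eta^2-2\kappa\eta$ with the explicit solution $\eta(t)=\kappa/(1-e^{-2\kappa(T-t)})$, and the tensor maximum principle closes the argument. The only quibble is the sign of the first-order term $\nabla_k Z_{ij}\nabla_k\log u$ in your evolution inequality (it should carry a minus sign as in \eqref{eq evolution Z_ij conjugate}), which is immaterial for the maximum principle.
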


\begin{remark}
In Theorem \ref{thm matrix LYH ancient}, it suffices to assume the weaker condition that $M\times \R$ has nonnegative isotropic curvature, in view of \cite{BCW19} and \cite[Proposition 6.2]{LN20}. 
\end{remark}

\begin{proof}[Proof of Theorem \ref{thm matrix LYH ancient}]
The proof is almost identical to that of Theorem \ref{thm matrix Harnack backward conjugate heat equation}. The difference is that we get the improved Harnack estimate (compared with \eqref{eq Harnack}) 
\begin{equation*}
M(w,w)+2P(v,w,w)+\Rm(v,w,v,w) \geq \tfrac{1}{2t}\Ric(w,w)
\end{equation*}
on ancient Ricci flows (see \cite{Hamilton93JDG} or \cite{Brendle09}). As a result, the ordinary differential inequality for $\eta(t)$ becomes 
\begin{equation}\label{ODE eta ancient}
    \eta'\leq 2\eta^2 -2\kappa \eta. 
\end{equation}
The theorem follows from the fact that the function
\begin{equation*}
    \eta(t):=\frac{\kappa}{1-e^{-2\kappa(T-t)}}
\end{equation*}
solves \eqref{ODE eta ancient} with equality on $(t_0,T)$ and satisfies $\eta(t) \to \infty$ as $t\to T$. 
\end{proof}

We get space-time gradient estimates for $\log u$ by tracing the matrix Li-Yau-Hamilton estimates. 
\begin{corollary}
Let $(M^n,g(t))$ and $u$ be the same as in Theorem \ref{thm matrix Harnack backward conjugate heat equation}. Then
\begin{equation*}
    R-\Delta \log u - \frac{n\k}{1-e^{-2\k(T-t)}} -n\sqrt{\frac{\k}{2t}} \leq 0
\end{equation*}
for all $(x,t)\in M\times (0,T)$. 
\end{corollary}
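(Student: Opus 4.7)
The statement is an immediate scalar consequence of Corollary \ref{corollary matrix Harnack conjugate}, so the plan is essentially to take a trace. Recall that the corollary gives the matrix inequality
\begin{equation*}
R_{ij} -\n_i\n_j \log u -\left(\frac{\k}{1-e^{-2\k(T-t)}} +\sqrt{\frac{\k}{2t}} \right) g_{ij} \leq 0
\end{equation*}
as a tensor inequality on $M\times(0,T)$ under the hypotheses of Theorem \ref{thm matrix Harnack backward conjugate heat equation}. Since this is an inequality between symmetric two-tensors in the sense that the difference is negative semidefinite, taking the $g(t)$-trace of both sides preserves the inequality.

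The trace of $R_{ij}$ against $g^{ij}$ is the scalar curvature $R$, the trace of the Hessian $\n_i\n_j\log u$ is $\Delta\log u$, and the trace of $g_{ij}$ is $n$. Therefore I would simply compute
\begin{equation*}
g^{ij}\lf(R_{ij}-\n_i\n_j\log u-\lf(\frac{\k}{1-e^{-2\k(T-t)}}+\sqrt{\frac{\k}{2t}}\ri)g_{ij}\ri)\leq 0,
\end{equation*}
which rearranges to
\begin{equation*}
R-\Delta\log u-\frac{n\k}{1-e^{-2\k(T-t)}}-n\sqrt{\frac{\k}{2t}}\leq 0
\end{equation*}
for every $(x,t)\in M\times(0,T)$, as claimed.

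There is no genuine obstacle here: the entire content lies in Corollary \ref{corollary matrix Harnack conjugate}, which in turn rests on Theorem \ref{thm matrix Harnack backward conjugate heat equation}. The only thing worth noting is that the $g_{ij}$ appearing in the matrix estimate is the evolving metric, so the trace is taken with the same $g(t)$; this is consistent because both $R_{ij}$ and the Hessian are defined using $g(t)$, and $g^{ij}g_{ij}=n$ regardless of the time slice.
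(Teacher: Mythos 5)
Your argument is correct and is exactly the paper's route: the paper obtains this corollary by tracing the matrix estimate of Corollary \ref{corollary matrix Harnack conjugate} with respect to $g(t)$, which is precisely what you do. Nothing further is needed.
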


\begin{corollary}
Let $(M^n,g(t))$ and $u$ be the same as in Theorem \ref{thm matrix LYH ancient}. Then
\begin{equation*}
    R-\Delta \log u - \frac{n\k}{1-e^{-2\k(T-t)}} \leq 0
\end{equation*}
for all $(x,t)\in M\times (0,T)$.  
\end{corollary}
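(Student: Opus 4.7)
The plan is to obtain the stated inequality simply by taking the trace of the matrix Li-Yau-Hamilton estimate in Theorem \ref{thm matrix LYH ancient}, which gives
\begin{equation*}
R_{ij} - \n_i\n_j \log u - \frac{\kappa}{1-e^{-2\kappa(T-t)}} g_{ij} \leq 0
\end{equation*}
as a symmetric $(0,2)$-tensor inequality on $M\times (t_0,T)$. Since the left-hand side is a nonpositive symmetric tensor at every point, its trace with respect to $g(t)$ is likewise nonpositive.

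Concretely, I would contract the inequality with $g^{ij}$. This produces three terms: $g^{ij}R_{ij} = R$ (the scalar curvature); $g^{ij}\n_i\n_j \log u = \Delta_{g(t)} \log u$; and $g^{ij} g_{ij} = n$. Assembling these yields
\begin{equation*}
R - \Delta \log u - \frac{n\kappa}{1-e^{-2\kappa(T-t)}} \leq 0,
\end{equation*}
which is precisely the claim. Strictly speaking, one should also observe that the hypotheses of Theorem \ref{thm matrix LYH ancient} (compact ancient Ricci flow with bounded nonnegative complex sectional curvature, and $\Ric \leq \kappa g$) are inherited verbatim, and that the function $u$ is positive so that $\log u$ and its Hessian are well-defined.

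There is no genuine obstacle here; the work was done in proving Theorem \ref{thm matrix LYH ancient}. The only thing to double-check is that the range of $(x,t)$ matches: Theorem \ref{thm matrix LYH ancient} gives the matrix estimate on $M\times (t_0,T)$ for any $t_0<T$, and in particular on $M\times (0,T)$, which is the range asserted in the corollary. Thus the proof reduces to a one-line tracing argument.
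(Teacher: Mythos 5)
Your proposal is correct and is exactly the paper's argument: the paper obtains this corollary by simply tracing the matrix Li-Yau-Hamilton estimate of Theorem \ref{thm matrix LYH ancient} with $g^{ij}$, yielding $R-\Delta\log u-\frac{n\k}{1-e^{-2\k(T-t)}}\leq 0$. Nothing further is needed.
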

Classical-type Harnack inequalities follow from integrating the above estimates. It is an interesting question whether the above gradient estimates hold under nonnegative Ricci or sectional curvature.

As an application of Theorem \ref{thm matrix Harnack backward conjugate heat equation}, we prove Proposition \ref{prop PF conjugate}. 
\begin{proof}[Proof of Proposition \ref{prop PF conjugate}]
By \eqref{eq matrix estimate KC geq 0}, we have $\Ric-\n^2 \log u \leq \frac{k(t)}{2(T-t)}$ with 
\begin{equation*}
\frac{k(t)}{(T-t)}= \frac{2\k}{1-e^{-2\k (T-t)}} +\sqrt{\frac{2\k}{t}}. 
\end{equation*}
By the work of Baldauf and Kim \cite{BK22}, the correction factor is given by
\begin{equation*}
e^{\int \frac{1-k(t)}{T-t} dt} = \frac{1}{T-t} e^{\int \left(\frac{-2\k}{1-e^{-2\k (T-t)}} -\sqrt{2\k} \frac{1}{\sqrt{t}} \right) dt } =\frac{1}{T-t} e^{2\k (T-t)-1} e^{-\sqrt{8\k t}}.
\end{equation*}
Therefore, we have proved Proposition \ref{prop PF conjugate}. 
\end{proof}

Proposition \ref{prop PF conjugate} implies a unique continuation result.
\begin{corollary}\label{corollary UC KC geq 0}
Let $(M^n,g(t))$, $t\in [0,T]$, be a solution to the Ricci flow with nonnegative complex sectional curvature and $\Ric \leq \k g$ for some $\k>0$. Suppose that a solution $u(x,t)$ of the heat equation \eqref{heat equation} on $M\times [0,T]$ vanishes of infinity order at $(x_0, t_0) \in M\times (0,T)$. Then $u\equiv 0$ in $M\times [0,T]$. 
\end{corollary}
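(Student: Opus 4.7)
The plan is to mimic the proof of Corollary~\ref{Corollary unique continuation}, using Proposition~\ref{prop PF conjugate} in place of Theorem~\ref{thm PF sec geq 0}. Let $w$ be the backward conjugate heat kernel on $M \times [0, t_0)$ with pole at $(x_0, t_0)$; by the bounded-curvature hypotheses $w$ is smooth and strictly positive and satisfies a Gaussian upper bound $w(x,t) \le C(t_0-t)^{-n/2}\exp(-d_t^2(x,x_0)/(C(t_0-t)))$ in a neighborhood of $x_0$. Set
\begin{equation*}
I(t) = \int_M u^2\, w\, d\mu_{g(t)}, \qquad D(t) = \int_M |\nabla u|^2\, w\, d\mu_{g(t)}.
\end{equation*}
Combining $u_t = \Delta u$, $w_t + \Delta w = Rw$, and $\partial_t\, d\mu_{g(t)} = -R\, d\mu_{g(t)}$, integrating by parts twice (justified by the blanket growth Note) and noticing that all scalar-curvature terms cancel, one obtains $I'(t) = -2D(t) \le 0$, so $I$ is nonincreasing.

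Applying Proposition~\ref{prop PF conjugate} to $u$ and $w$ on $[0, t_0]$ (with the role of the proposition's $T$ played by $t_0$), the quantity
\begin{equation*}
\Phi(t) := (e^{2\k(t_0 - t)} - 1)\, e^{-\sqrt{8\k t}}\, \frac{D(t)}{I(t)}
\end{equation*}
is nonincreasing on $[0, t_0)$. Arguing by contradiction, assume $I(0) > 0$, so $\Phi_0 := \Phi(0) < \infty$ and $\Phi(t) \le \Phi_0$ for all $t \in [0,t_0)$. Using $(\log I)'(t) = -2D/I$ together with the elementary bound $e^{2\k(t_0 - s)} - 1 \ge 2\k(t_0 - s)$, this monotonicity yields
\begin{equation*}
-(\log I)'(t) \le \frac{\Phi_0\, e^{\sqrt{8\k t_0}}}{\k(t_0 - t)},
\end{equation*}
and integrating over $[0, t_0 - \epsilon]$ produces the polynomial-type lower bound $I(t_0 - \epsilon) \ge I(0)\,(\epsilon/t_0)^{P}$, with $P := \Phi_0\, e^{\sqrt{8\k t_0}}/\k$, for all small $\epsilon > 0$.

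For the matching upper bound, combine the infinite-order vanishing with the Gaussian bound on $w$: the hypothesis gives $u^2(x, t_0 - \epsilon) \le C_N(d_{t_0-\epsilon}^2(x, x_0) + \epsilon)^{2N}$ for every $N$ and every $x$ near $x_0$. Substituting into the definition of $I$, rescaling $y = (x - x_0)/\sqrt{\epsilon}$ in the local piece to reduce to a Gaussian moment of order $2N$, and handling the tail by the exponential decay of $w$ away from $x_0$, yields $I(t_0 - \epsilon) \le \widetilde{C}_N\, \epsilon^{N}$ for every $N$. Comparing with the lower bound above forces $I(0) = 0$; since $w(\cdot, 0) > 0$ this implies $u(\cdot, 0) \equiv 0$, and forward-in-time uniqueness for the heat equation under bounded curvature then gives $u \equiv 0$ on $M \times [0, T]$.

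I expect the main obstacle to be analytic rather than algebraic: verifying the Gaussian upper bound and the $L^2$-weighted growth estimates for $w$ needed to differentiate $I(t)$ under the integral and to integrate by parts in the complete noncompact setting, and confirming that Proposition~\ref{prop PF conjugate} really does apply on $[0, t_0]$ with a weight $w$ that becomes singular at the endpoint $t_0$ rather than being smooth up to the final time $T$. Both should be handled under the curvature hypotheses together with the paper's blanket growth Note, and once this is done the rest of the argument is a direct analogue of the proof of Corollary~\ref{Corollary unique continuation}.
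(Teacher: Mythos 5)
Your proposal is correct and follows essentially the same route as the paper's (omitted) proof: weight by the backward conjugate heat kernel with pole at $(x_0,t_0)$, apply Proposition \ref{prop PF conjugate} with $T$ replaced by $t_0$ (whose correction factor is asymptotic to $2\kappa(t_0-t)$), note $I'=-2D$, and play the resulting polynomial lower bound on $I(t_0-\epsilon)$ against the super-polynomial upper bound coming from the infinite-order vanishing and the Gaussian bound on the kernel. The endpoint issue you flag (the weight is singular at $t_0$ rather than smooth up to the final time) is harmless: apply the proposition on $[0,t_0-\delta]$, where the kernel is a smooth positive solution, and let $\delta\to 0$ to recover the bound $\Phi(t)\le\Phi(0)$ for every $t<t_0$.
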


\begin{proof}[Proof of Corollary \ref{corollary UC KC geq 0}]
The key point is that the correction factor in \eqref{eq PF KC geq 0} is asymptotic to $(T-t)$ as $t \to T$. The proof is similar to that of Corollary \ref{Corollary unique continuation} and we omit the details. 
\end{proof}

\section{An improvement of Hamilton's matrix estimate}

We present the proof of Theorem \ref{thm improve Hamilton} in this section. 

\begin{proof}[Proof of Theorem \ref{thm improve Hamilton}]
We write $v=\log u$ and $H_{ij}=\n_i\n_j \log u$. By a straightforward calculation as in Section 2 or \cite{Hamilton93}, we derive that
\begin{eqnarray*}
 (\p_t -\Delta)H_{ij} 
&=& 2H^2_{ij}+2R_{ikjl}H_{kl}-R_{ik}H_{jk}-R_{jk}H_{ik}+2R_{ikjl}\n_k v \n_l v \\
&&  +2\n_k H_{ij}\n_k v +(\n_l R_{ij} -\n_iR_{jl}-\n_j R_{il})\n_l v.
\end{eqnarray*}
Compared to \eqref{eq evolution H_ij S3} in the Ricci flow case, here we have the addition term $(\n_l R_{ij} -\n_iR_{jl}-\n_j R_{il})\n_l v$. 
As in the proof of Theorem \ref{thm matrix Harnack heat equation general case} in Section 4, we consider $Q_{ij}:=tH_{ij}$, which satisfies
\begin{eqnarray}\label{eq 7.0}
 (\p_t -\Delta)Q_{ij} 
&=& \frac{1}{t}Q_{ij}+\frac{2}{t}Q^2_{ij}+2R_{ikjl}Q_{kl}-R_{ik}Q_{jk}-R_{jk}Q_{ik}\\ \nonumber 
&& +2tR_{ikjl}\n_k v \n_l v  +2\n_k Q_{ij}\n_k v \\ \nonumber 
&&+t(\n_l R_{ij} -\n_iR_{jl}-\n_j R_{il})\n_l v.
\end{eqnarray}
Let $\l_1$ be the smallest eigenvalue of $Q_{ij}$. 
Using $|\n \Ric|\leq L$, we estimate the last term 
\begin{equation*}
t(\n_l R_{ij} -\n_iR_{jl}-\n_j R_{il})\n_l v \geq -3Lt |\n v| g_{ij} \geq -3(L^{\frac{2}{3}}|\n v|^2 +L^{\frac{4}{3}})g_{ij}.
\end{equation*}
As in the proof of Theorem \ref{thm matrix Harnack heat equation general case}, we have the estimates
\begin{eqnarray*}
R_{ikjl}Q_{kl}\xi_i\xi_j &\geq& (2n-1)K\l_1 -Kt\Delta v, \\
R_{ikjl}\n_k v \n_l v \xi_i\xi_j &\geq & -K |\n v|^2, 
\end{eqnarray*}
Therefore, we deduce from \eqref{eq 7.0} that at a negative minimum point $(x_0,t_0)\in M\times [0,t_0]$ of $\l_1$, 
\begin{eqnarray}\label{eq 7.1}
2\l_1^2 &\leq&  -\l_1-2(2n-1)Kt\l_1+2Kt^2\Delta v +2Kt^2|\n v|^2 \\ \nonumber
&& +3L^{\frac{2}{3}}t^2|\n v|^2 +3L^{\frac{4}{3}}t^2. 
\end{eqnarray}

From now on, we assume $u=G(x,t,y)$ is the heat kernel and estimate $\Delta \log u$ and $|\n \log u|$. 
First, applying Hamilton's gradient bound \cite[Theorem 1.1]{Hamilton93} to $u(x,t+t_0,y)$ yields for any $t_0>0$ that
\begin{equation}\label{eq 7.2}
(t-(t_0/2))|\n \log u|^2 \leq (1+2(n-1)K(t-t_0/2))\log \frac{A}{u},
\end{equation}
where $A=\sup \{u(x,t):(x,t)\in M\times [t_0/2,t_0]\}$. 
According to \cite{Zhang21}, we have 
\begin{eqnarray}\label{eq 7.3}
\log \frac{A}{u} &\leq& 2 \log C_1 +4C_2Kt_0+\frac{d^2(x,y)}{3t_0}+C_3\sqrt{K}\frac{\Diam}{\sqrt{t_0}}\\ \nonumber
&\leq & C_4(1+K+Kt_0)+\frac{\Diam^2}{2t_0},
\end{eqnarray}
where $C_1$, $C_2$, $C_3$, and $C_4$ are dimensional constants.
Substituting \eqref{eq 7.3} into \eqref{eq 7.2} produces
\begin{equation}\label{eq 7.35}
t_0|\n \log u|^2 \leq 2(1+(n-1)Kt_0)\left(C_4(1+K+Kt_0)+\frac{\Diam^2}{2t_0}\right).
\end{equation}
Next, we apply the Laplacian estimate (see \cite[Theorem E.35]{Chowbookpart2}) to the function $u(x,t+t_0,y)$ and get 
\begin{equation*}
(t-t_0/2)\left( \Delta \log u +2|\n \log u|^2 \right) \leq (1+(n-1)K(t-t_0/2))\left(n+4\log \frac{A}{u}\right). 
\end{equation*}
At $t=t_0$, we obtain using \eqref{eq 7.3} that
\begin{eqnarray}\label{eq 7.4}
&& t_0(\Delta \log u +2|\n \log u|^2) \\ \nonumber
&\leq&  (2+(n-1)Kt_0)\left(n+4C_4(1+K+Kt_0)+ 2\frac{\Diam^2}{t_0}\right)
\end{eqnarray}
Since $t_0>0$ is arbitrary, we conclude that \eqref{eq 7.35} and \eqref{eq 7.4} are valid for any $t_0=t>0$.
Combining \eqref{eq 7.35} and \eqref{eq 7.4}, we estimate that
\begin{eqnarray*}
B&:=& 2Kt \Delta v +2Kt |\n v|^2 +3L^{\frac{2}{3}}t|\n v|^2 \\ \nonumber
&\leq & 2K(2+(n-1)Kt_0)\left(n+4C_4(1+K+Kt_0)+ 2\frac{\Diam^2}{t}\right) \\ \nonumber
&& +3L^{\frac{2}{3}}(2(1+(n-1)Kt_0))\left(C_4(1+K+Kt_0)+\frac{\Diam^2}{2t}\right) \\ \nonumber
&\leq & 2nK(2+(n-1)Kt)  +C_5(K+L^{\frac{2}{3}})(1+Kt)(1+K+Kt)\\ \nonumber
&& + \left(4K(2+(n-1)Kt)+3L^{\frac{2}{3}}(1+(n-1)Kt) \right)\frac{\Diam^2}{t}, 
\end{eqnarray*}
where $C_5$ depends only on the dimension. Now, \eqref{eq 7.1} implies, 
\begin{equation}\label{eq 7.6}
2\l_1^2 +\l_1+2(2n-1)Kt\l_1 \leq tB +3L^{\frac{4}{3}}t^2. 
\end{equation}
Note that if $ax^2+bx \leq c$ with $a>0$, $b>0$, and $c>0$, then we have the lower bound
\begin{equation*}
    x \geq \frac{-b-\sqrt{b^2+4ac}}{2a} \geq -\frac{b+\sqrt{ac}}{a}.
\end{equation*}
Therefore, we deduce from \eqref{eq 7.6} that 
\begin{eqnarray*}
\l_1 &\geq& -\frac{1}{2}+(2n-1)Kt + \frac{1}{2}\sqrt{2(tB+3L^{\frac{4}{3}}t^2)}\\
&\geq & -\frac{1}{2}+(2n-1)Kt +\frac{1}{2}\sqrt{tB}+\frac{\sqrt{3}}{2}L^{\frac{2}{3}}t.
\end{eqnarray*}
The desired estimate for $u=G(x,t,y)$ then follows by noting 
\begin{eqnarray*}
\sqrt{tB}\geq t \gamma(t,n,K,L).
\end{eqnarray*}
Finally, we can follow the same argument in Section 4 or \cite{Zhang21} to show that the desired estimate holds from any positive solution to the heat equation. 
This completes the proof.


\end{proof}

\section*{Acknowledgments}
The authors would like to thank Dr. Jianyu Ou for his interest and for pointing out \cite{MO22}. The second author thanks the Department of Mathematics, Statistics and Physics at Wichita State University for its hospitality during his visit, at which occasion the initial questions for this project were discussed.

\section*{Conflict of Interest}
On behalf of all authors, the corresponding author states that there is no conflict of interest.

\section*{Data Availability Statement}
Data sharing is not applicable to this article as no datasets were generated or analyzed during the current study.

\bibliographystyle{alpha}
\bibliography{references}

\end{document}